\newcommand{\jac}{\text{Jac}}
\def\r{{\rangle}}
\def\l{{\langle}}
\def\S{{\mathbb S}}
\def\R{{\mathbb R}}
\newtheorem{theorem}{Theorem}
\newtheorem{lemma}[theorem]{Lemma}
\newtheorem{corollary}[theorem]{Corollary}
\newtheorem{proposition}[theorem]{Proposition}
\theoremstyle{definition}
\newtheorem{definition}[theorem]{Definition}
\theoremstyle{remark}
\newtheorem{remark}[theorem]{Remark}
\numberwithin{equation}{section}
\numberwithin{theorem}{section}
\numberwithin{problem}{section}
\begin{document}

\begin{abstract}
In this paper we construct global strong dispersive solutions to the space inhomogeneous kinetic wave equation (KWE) which propagate $L^1_{xv}$ -- moments and conserve mass, momentum and energy. We prove that they scatter, and that the wave operators mapping the initial data to the scattering states are 1-1, onto and continuous in a suitable topology.

Our proof is carried out entirely in physical space, and combines dispersive estimates for the free transport with new trilinear bounds for the gain and loss operators of the KWE on weighted Lebesgue spaces. A fundamental tool in obtaining these bounds is a novel collisional averaging estimate.

Finally we show that the nonlinear evolution preserves positivity forward in time. For this,  we use the Kaniel-Shinbrot iteration scheme \cite{KS}, properly initialized to ensure the successive approximations are dispersive. 
\end{abstract}

\title{Scattering theory for the Inhomogeneous Kinetic Wave Equation}

\author[I. Ampatzoglou]{Ioakeim Ampatzoglou}
\address{Baruch College, The City University of New York, Newman Vertical Campus, 55 Lexington Ave New York, NY, 10010, USA}
\email{ioakeim.ampatzoglou@baruch.cuny.edu}

\author[T. L\'eger ]{Tristan L\'eger}
\address{Courant Institute of Mathematical Sciences, 251 Mercer Street, New York, NY 10012, USA}
\email{tbl248@nyu.edu}




\maketitle

\tableofcontents

\section{Introduction}

\subsection{Background}
In this paper we consider the 3D space inhomogeneous kinetic wave equation
\begin{equation} \label{KWE}
\begin{cases}
\partial_t f + v \cdot \nabla_x f = \mathcal{C}[f]\\
f(t=0)=f_0
\end{cases}
\end{equation}
where $f:(-T,T) \times \mathbb{R}_x^3 \times \mathbb{R}_v^3 \rightarrow \mathbb{R}$, $T>0$ and $f_0:\R^3_x\times\R^3_v\to\R$.

The collision operator $\mathcal{C}$ is defined as 
\begin{align} \label{collision}
\begin{split}
    \mathcal{C}[f] & := \int_{\mathbb{R}^{9}} \delta (\Sigma) \, \delta(\Omega) \, f f_1 f_2 f_3 \, \big(\frac{1}{f} + \frac{1}{f_1} - \frac{1}{f_2} - \frac{1}{f_3} \big) \,dv_1 dv_2 dv_3  \\
    \Sigma&:=v+v_1 - v_2 - v_3 \\
    \Omega &:= \vert v \vert^2 + \vert v_1 \vert^2 - \vert v_2 \vert^2 - \vert v_3 \vert^2 .
\end{split}
\end{align}
This equation is well-known in the field of wave turbulence, which has seen a surge of interest in recent years. Although some basic concepts and principles had already been laid out   in 1929 by R. Peierls \cite{Pe} and independently in 1962 by K. Hasselman \cite{ha62,ha63}, a connection to hydrodynamic turbulence uncovered by V. Zakharov, V. L'vov and G. Falkovich \cite{ZLF} contributed to a dramatic uptick in popularity of the subject. 

This theory aims to describe the out-of-equilibrium dynamics of physical system composed of random waves. More precisely the objective is to track and predict statistical properties of solutions to these equations, such as transfers of energy between scales. This is typically achieved by deriving effective equations (of wave kinetic type) for the quantities of interest. We refer to the lecture notes of S. Nazarenko \cite{Na} for a comprehensive list of examples where this theory was successfully implemented.

At the mathematical level the theory has matured: there are now rigorous derivations of wave kinetic equations for several models. The most studied is the homogeneous kinetic wave equation with Laplacian dispersion relation (it corresponds to solutions to \eqref{KWE} that do not depend on $x$). It was shown to capture the statistical properties of a system of random waves modelled by the cubic nonlinear Schr\"{o}dinger equation on large time scales. The validity of the derivation was proved on increasingly large time intervals by various groups until the optimal time (so called kinetic time) was reached by Y. Deng and Z. Hani. We refer the reader to the works \cite{BGHS}, \cite{DH1}, \cite{DH2}, \cite{DH3}, \cite{DH4}, \cite{CG1}, \cite{CG2} for details. In the inhomogeneous setting, the first derivation result was obtained for the 3-wave kinetic wave equation and quadratic nonlinearities, albeit on a shorter time scale,  by the first author, C. Collot, and P. Germain \cite{AmCoGer}. Later, inhomogeneous turbulence was also derived from the Wick (NLS) by Z. Hani, J. Shatah and H. Zhu in \cite{HaShZh}. Other models have also been considered, such as KdV-type equations by G. Staffilani and M.-B. Tran \cite{ST}, as well as X. Ma \cite{Ma}. This general program is also presently being pursued by Y. Deng, A. Ionescu and F. Pusateri for water waves \cite{DIP}. 

With the derivation of these kinetic wave equations now on firm mathematical ground, it is legitimate to start analyzing them to gain further insight into the out-of-equilibrium behavior of systems of interacting waves. There are still relatively few results in this direction. In the case of the homogeneous kinetic wave equation for example, only local well-posedness is known for strong solutions by work of P. Germain, A. Ionescu and M.-B. Tran \cite{GeIoTr}. Global weak solutions were constructed, and their asymptotic behavior studied by M. Escobedo and J. Vel\'{a}zquez in \cite{EV}. Moreover, the stability of equilibria and Kolmogorov-Zakharov spectra was studied in \cite{me23,EsMe24, codige24}. For the space inhomogeneous equation \eqref{KWE}, we will give a more detailed account of prior results  below. 

Of course the kinetic community has already studied equations similar to \eqref{KWE}. In fact it corresponds to the cubic part of nonlinearity of the Boltzmann-Nordheim equation. This gives another motivation for studying the model \eqref{KWE}, since the analysis can be combined with that of the Boltzmann equation to obtain results for the quantum Boltzmann model.

\subsection{Results obtained}
Focusing more specifically on equation \eqref{KWE}, we first mention that in contrast with the homogeneous case, global existence in exponentially weighted spaces has recently been proved by the first author \cite{Am}. It was then relaxed to polynomially weighted spaces in joint work with J.K. Miller, N. Pavlovi\'{c} and M. Taskovi\'{c} in \cite{AmMiPaTa24}, where it is obtained as a corollary of the analogous result for the hierarchy. 

In the present article we start by proving another global existence result for \eqref{KWE}. Then we push the analysis further, and study the asymptotic behavior of solutions. More precisely we prove that they scatter, meaning they solve the free transport equation asymptotically. We stress that going beyond global existence and establishing scattering is a challenging problem for kinetic equations. For example global existence near vacuum is well-known for the Landau equation in the case of moderately soft potentials by work of J. Luk \cite{Luk} and hard potentials (see \cite{Chaturvedi}). Yet scattering remains open and is expected to be a challenging problem as pointed out in \cite{Luk}. Nonetheless we are able to establish such a result for \eqref{KWE} by taking a new point of view on the problem. We rely on dispersive properties of free transport rather than on closeness to maxwellians as in \cite{Am}, or on the hierarchy as in \cite{AmMiPaTa24}. 

The use of dispersive methods is not without precedent in the kinetic community. For the Boltzmann equation they were mainly used to study local well-posedness at low regularity. In that direction, see works of T. Chen, R. Denlinger and N. Pavlovi\'{c} \cite{CDP-LWP,CDP-moments,CDP-GWP}, as well as X. Chen and J. Holmer \cite{CH}, X. Chen, S. Shen and Z. Zhang \cite{CSZ1,CSZ2}. All these papers actually focused on the Fourier transform of the solution rather than work in physical space. We also mention the work of D. Arsenio \cite{Arsenio} that relies on Strichartz estimates for the free transport to construct weak solutions to the Boltzmann equation for soft potentials. This approach was extended to hard potentials in the very recent paper of L.-B. He, J.-C. Jiang, H.-W. Kuo and M.-H. Liang \cite{HJKL}. 

In the present work we are able to overcome the fast growth of the hard-sphere kernel in the context of the kinetic wave equation, and construct global strong solutions to the equation. We also give a comprehensive description of their asymptotic behavior. In addition, we show that they correspond to natural physical solutions: they propagate $L^1_{xv}$ -- moments, conserve mass, momentum, energy and the nonlinear flow preserves positivity forward in time. Informally, our results can be stated as 
\begin{theorem}
The equation \eqref{KWE} with small and sufficiently decaying initial data $f_0$ has a unique global dispersive solution. Moreover as $t \to \pm \infty$ this solution scatters, that is there exist $f_{+\infty}, f_{-\infty}$ such that 
\begin{align*}
    f(t,x,v) - f_{+\infty}(x-vt,v) \longrightarrow_{t \to +\infty} 0, \quad
    f(t,x,v) - f_{-\infty} (x-vt,v) \longrightarrow_{t \to -\infty} 0,
\end{align*}
in a suitable sense.

Moreover these solutions propagate $L^1_{xv}$-- moments: if $\Vert \l v \r^N f(0) \Vert_{L^1_{xv}}$ is small enough, then $\Vert \l v \r^N f(t) \Vert_{L^1_{xv}}$ remains uniformly bounded in time, and they conserve total mass, momentum and energy i.e.
$$\partial_t \int_{\R^6}\phi f(t)\,dv\,dx=0,\quad \phi\in\{1,v,|v|^2\}.$$

Finally the nonlinear flow preserves positivity forward in time: if $f(0) \geq 0,$ then for all $t \ge 0$ we have $f(t) \geq 0.$
\end{theorem}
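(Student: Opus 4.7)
The overall plan is to formulate \eqref{KWE} as a fixed-point problem along free-transport characteristics. Writing $f^\#(t,x,v) := f(t,x+vt,v)$ gives the Duhamel identity
\begin{equation*}
f^\#(t,x,v) = f_0(x,v) + \int_0^t \mathcal{C}[f]^\#(s,x,v)\,ds,
\end{equation*}
so I look for solutions in an $L^\infty_t$ space of weighted Lebesgue functions adapted to the dispersion of free transport (roughly an analogue, for transport, of the dispersive Strichartz spaces used for Boltzmann in \cite{Arsenio,HJKL}). The first step is to prove a trilinear estimate of the form $\|\mathcal{C}[f,g,h]\|_Y \lesssim \|f\|_X \|g\|_X \|h\|_X$ in which the gain is measured in a norm $Y$ integrable in time along rays so that the Duhamel integral closes in $X$. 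A standard contraction argument in a small ball then produces a unique global dispersive solution for small initial data, and the same estimate yields uniqueness and continuous dependence.

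The hard part—and the place where the novel collisional averaging estimate is indispensable—is proving this trilinear bound. The collision kernel carries two delta functions enforcing energy and momentum conservation together with cubic nonlinearity, so without the $1/2$-gain provided by velocity averaging one cannot even hope to trade the singularity of $\delta(\Sigma)\delta(\Omega)$ against the dispersive decay $|t|^{-3}$ of free transport. The plan is to dualize $\mathcal{C}$ against a test function, change variables to exploit the two deltas (reducing three velocity integrations to an averaged sphere), and then interpolate the resulting averaging estimate with the raw dispersive decay of free transport. This should produce weighted trilinear bounds in $X$ with a time-integrable kernel, and simultaneously the weighted trilinear bounds in $\langle v\rangle^N L^1_{xv}$ needed for moment propagation; once established, smallness of $\|\langle v\rangle^N f_0\|_{L^1_{xv}}$ closes a separate fixed point controlling moments.

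Scattering is then essentially free: once $\mathcal{C}[f]^\#$ is integrable in time in the $X$-norm, the tails $\int_t^{\pm\infty}\mathcal{C}[f]^\#\,ds$ converge, defining $f_{\pm\infty}(x,v) = f_0(x,v) + \int_0^{\pm\infty}\mathcal{C}[f]^\#\,ds$, and the difference $f(t,x,v) - f_{\pm\infty}(x-vt,v)$ tends to zero in the natural norm. The wave operators $f_0 \mapsto f_{\pm\infty}$ are injective and continuous by the same contraction bounds; surjectivity is obtained by solving \eqref{KWE} backwards from $t=\pm\infty$ with data $f_{\pm\infty}$, which is again possible because the trilinear estimate is symmetric in time. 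Conservation of mass, momentum and energy follows from the symmetries of the collision kernel under the change $(v,v_1)\leftrightarrow(v_2,v_3)$ after testing against $1,v,|v|^2$; sufficient decay from the moment estimate justifies the manipulations rigorously.

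Positivity forward in time is proved via the Kaniel--Shinbrot iteration \cite{KS}: one constructs monotone sequences $\ell_n \le f \le u_n$ defined by the decoupled linear problems $\partial_t u_{n+1} + v\cdot\nabla_x u_{n+1} + L[\ell_n]u_{n+1} = G[\ell_n]$ and the analogous equation for $\ell_{n+1}$ with $G,L$ the gain and loss parts of $\mathcal{C}$. The subtle point, as emphasized in the abstract, is to initialize $(\ell_0,u_0)$ so that each iterate stays in the dispersive space $X$; taking $\ell_0 = 0$ and $u_0$ equal to the free transport evolution of a suitable majorant of $f_0$ should work, because the trilinear estimate already controls the full nonlinearity and therefore controls $G[u_0]$ and $L[\ell_0]$ in $X$ uniformly. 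Passing to the limit and identifying the common limit with $f$ by uniqueness yields $0 \le f$ for $t \ge 0$.
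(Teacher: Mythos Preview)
Your outline is largely on target and tracks the paper's architecture: trilinear bounds on the collision operator in weighted Lebesgue spaces, a perturbative global existence argument using free-transport dispersion, scattering via the Duhamel tail, conservation laws from the collision invariants, and Kaniel--Shinbrot for positivity. Two points deserve correction.

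\textbf{The collisional averaging is not a velocity-averaging lemma.} Your description invokes a ``$1/2$-gain provided by velocity averaging,'' which suggests the Golse--Lions--Perthame--Sentis mechanism trading transport regularity for velocity averages. That is not what the paper uses. After parametrizing the resonant manifold (which converts $\delta(\Sigma)\delta(\Omega)$ into a hard-sphere Boltzmann-type integral with kernel $|v-v_1|$), the key new ingredient is an \emph{angular} averaging estimate: for $\gamma\in[0,2]$,
\[
\sup_{v,v_1}\int_{\S^2}\frac{|v-v_1|^\gamma}{\langle v^*\rangle^3}\,d\sigma \lesssim 1,
\]
which absorbs the linear growth of the hard-sphere kernel into the decay of a factor evaluated at a post-collisional velocity. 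This is what allows the weighted $L^p$ bounds to close with only polynomial $v$-weights; without it, each application of $\mathcal{C}$ would cost one extra $\langle v\rangle$ and no closure is possible. Your plan to ``interpolate with the raw dispersive decay'' does not by itself address this growth.

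\textbf{The Kaniel--Shinbrot initialization is the genuine gap.} Taking $\ell_0=0$ and $u_0=\mathcal{S}(t)g_0$ for a majorant $g_0\geq f_0$ will not, in general, satisfy the beginning condition $u_1\leq u_0$. With $\ell_0=0$ the first upper iterate is
\[
u_1(t)=\mathcal{S}(t)f_0+\int_0^t \mathcal{S}(t-s)\,\mathcal{G}[u_0](s)\,ds,
\]
and there is no pointwise reason for this to be bounded by $\mathcal{S}(t)g_0$; controlling $\mathcal{G}[u_0]$ in norm is irrelevant here, since the scheme requires a \emph{pointwise} inequality to propagate monotonicity. The classical remedies (traveling Maxwellians, as in Illner--Shinbrot) do not sit inside the polynomially weighted dispersive space used here. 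The paper's fix, which is precisely the point flagged in the abstract, is to take $u_0$ equal to the solution $\widetilde{f}$ of the \emph{gain-only} equation. Then $u_1$ solves the same gain-only problem (the loss term vanishes since $\ell_0=0$), so $u_1=u_0$ by uniqueness and the beginning condition holds trivially. The dispersive bounds for $\widetilde{f}$ are obtained by running the same bootstrap as for the full equation with $\mathcal{L}$ set to zero, and these bounds then propagate to every iterate by the pointwise ordering $0\le l_n\le u_n\le u_0$.

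As a minor point, your iteration formula $\partial_t u_{n+1}+v\cdot\nabla_x u_{n+1}+L[\ell_n]u_{n+1}=G[\ell_n]$ has the wrong source: the upper sequence must be driven by $G[u_n]$, not $G[\ell_n]$, or else $u_{n+1}\geq \ell_{n+1}$ fails and the nesting collapses.
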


To refine the analysis, we describe the scattering states using concepts from scattering theory, namely wave operators. It is indeed natural to seek further information on the scattering states $f_{\pm \infty}$, about their general shape for example. By analogy with the Boltzmann equation, it could be expected that they have to be thermodynamic equilibria, called Rayleigh-Jeans distributions (the counterparts of maxwellians in kinetic wave theory). We answer in the negative and prove that the scattering states can be any reasonably fast decaying function near vacuum. 

Although this constitutes the first scattering result for the inhomogeneous kinetic wave equation, such a result was already obtained for mild solutions of the inhomogeneous Boltzmann equation by C. Bardos, I.M. Gamba, F. Golse and C.D. Levermore  in \cite{BGGL}, albeit in exponentially weighted spaces. In the present paper, the initial data is measured in a space which requires integrability in $x$ and $v$ with polynomial weights in $v$ only, instead of either exponential weights or polynomial weights in both $x$ and $v$ as in prior works (see \eqref{defXM} and \eqref{defnormXM} for the precise definition). We emphasize that this is not a mere technicality, and leads to conceptual differences with existing works in the literature: it allows us to work with the more natural notion of strong solution for the equation rather than mild solutions as in \cite{BGGL,Am,AmMiPaTa24}. 
Moreover in the setting of wave turbulence polynomially weighted spaces are more physically relevant than exponentially weighted ones, especially when it comes to long-time dynamics. Indeed thermodynamic equilibrium is reached for distributions that exhibit polynomial decay, unlike for the Boltzmann equation where such states are maxwellians.

Besides allowing us to describe the asymptotic behavior of solutions (namely scattering), another advantage of our method is that it straightforwardly yields global propagation of $L^1_{xv}$ -- moments for our solutions. This is far from evident a priori; in fact, to the best of our knowledge, for hard cutoff potentials, propagation of $L^1_{xv}$ -- moments is not clear for the inhomogeneous Boltzmann equation \cite{Villani}, although moment production estimates are well studied in the spatially homogeneous case, see e.g. \cite{de93, bo97, miwe99, ACGM}.  

Our method of proof is versatile, and we expect that it can be used for many other kinetic models. In this direction, a similar result for the Boltzmann equation will be the subject of an upcoming work \cite{AmLeBoltz}.

\subsection{Ideas of the proof}
Our proof is perturbative in nature: we show that the nonlinearity is dominated by the free transport in the small data regime. Since this operator has well-known dispersive properties, this allows us to control the evolution globally in time.

To use a bootstrap-based perturbative approach modelled on similar arguments for dispersive equations (e.g. the nonlinear Schr\"{o}dinger equation), it is necessary to know mapping properties of the collision operator on (possibly weighted) Lebesgue spaces. However nonlinearities in kinetic equations are notoriously hard to work with due to their complicated expression as integrals on resonant manifolds. Dealing with this difficulty is a key step in our paper. In fact this specific problem has a long history for the Boltzmann equation, whose theory is much more advanced than that of the kinetic wave equation. A large literature is devoted solely to establishing boundedness of the collision operator on weighted Lebesgue spaces (see the papers of R. Alonso, E. Carneiro \cite{AC}, and the same authors with I.M. Gamba \cite{ACG}, as well as references therein). Using these estimates, distributional and classical solutions for the Boltzmann equation for soft potentials were constructed by R. Alonso and I.M. Gamba in \cite{AlGa09}. 

The fact that we work in polynomially weighted spaces adds a layer a complexity to the problem. Indeed the presence of linear growth in the kernel of the collision operator could lead to believe that estimates cannot be closed (naively the right-hand side of the equation always has one more $v$ weight than the linear part). To deal with this problem, we leverage the decay of a singular integral (see Lemma \ref{singint}) to offset the linear growth of the kernel. This can be seen as a kind of regularization by angular averaging. To the best of our knowledge this observation was not made in the kinetic literature previously. The same difficulty is present for the Boltzmann equation with hard potentials. Despite intense efforts to obtain sharp weighted Lebesgue estimates of the collision operator, this case was only very recently settled in \cite{HJKL}, where the authors build on the extensive literature mentioned above. The regularization mechanism we uncover for \eqref{KWE} is different (and simpler) than the one used in \cite{HJKL} and its precursors. 

Of course the bounds we obtain are much less favorable than those typically available in more standard dispersive problems, such as the nonlinear Schr\"{o}dinger equation. Therefore the bootstrap assumptions have to be carefully chosen to accommodate the rough nonlinearity. As a result multiple estimates quantifying the decay, amplitude and integrability of the solution have to be propagated to close the argument. 

As mentioned above, an upside of our method of proof is that a number of properties of the solution can be directly deduced from the analysis. For example we are able to show that the solutions propagate $L^1_{xv}$ -- moments. This in turn allows us to prove that the natural conservation laws (mass, momentum, energy) are satisfied. 

Finally we prove that the nonlinear flow preserves positivity forward in time. This is a non-perturbative argument which uses the gain-loss structure of the collision operator. Therefore one expects the equation to display time irreversibility, unlike the transport which is time reversible. This is reflected in the fact that our argument only applies to positive times, although the free transport propagates positivity both forward and backward in time. To show positivity of the solution for positive times, we use the Kaniel-Shinbrot \cite{KS} iteration scheme which is a classical tool in collisional kinetic theory, see e.g. \cite{KS,illner-shinbrot,beto85,to86,AlGa09,amgapata22}; however it was not until recently exploited in the context of wave turbulence by the first author in \cite{Am}, and later by the first author and collaborators in \cite{AmMiPaTa24}. In our dispersive setting though, the use of this method is more delicate. Indeed it is harder to guarantee dispersion of the iterates. To remedy this, we carefully initialize the scheme by solving the gain only equation \eqref{KWE} and using its dispersive bounds. We note that similar ideas of initializing the scheme by solving the gain equation were first used in the works \cite{illner-shinbrot,CDP-GWP,CSZ2}.

\subsection*{Acknowledgements}
I.A. was supported by NSF grant No. DMS-2418020. T.L. was supported by the Simons grant on wave turbulence. The authors thank Jalal Shatah for his comments on an earlier version of the manuscript. T.L. thanks Sanchit Chaturvedi for pointing out several relevant references.

\subsection{The kinetic model}
The collision operator can be  split in gain-loss form
\begin{equation}
\mathcal{C}[f]=\mathcal{G}[f]-\mathcal{L}[f],    
\end{equation}
where the gain and loss operators are respectively given by  
\begin{align}
\mathcal{G}[f]&=\int_{\R^9}\delta(\Sigma)\delta(\Omega)f_2f_3(f+f_1)\,dv_1\,dv_2\,dv_3,\label{gain operator}\\
\mathcal{L}[f]&=\int_{\R^9} \delta(\Sigma)\delta(\Omega) ff_1(f_2+f_3)\,dv_1\,dv_2\,dv_3\label{loss operator},
\end{align}
the resonant manifolds by
\begin{align}
   \Sigma&=v+v_1-v_2-v_3, \label{Sigma}\\
\Omega&=|v|^2+|v_1|^2-|v_2|^2-|v_3|^2, \label{Omega}
\end{align}
and we use the standard notation $f:=f(x,v)$, $f_i=f(x,v_i)$, $i=1,2,3$. Notice that both $\mathcal{G},\mathcal{L}$ are positive operators and monotone for non-negative inputs.

Relabeling the variables, one can see that the collisional operator $\mathcal{C}[f]$ satisfies the following weak formulation 
\begin{equation}\label{weak formulation}
\int_{\R^3}\mathcal{C}[f]\phi\,dv= \frac{1}{2}\int_{\R^{12}}\delta(\Sigma)\delta(\Omega)f_1f_2f_3\left(\phi+\phi_1-\phi_2-\phi_3\right)\,dv_1\,dv_2\,dv_3\,dv,
\end{equation}
where $\phi$ is a test function appropriate for all the above integrations to make sense. Choosing $\phi\in\{1,v,|v|^2\}$ and using the resonant conditions, one can formally see that a solution $f$ to \eqref{KWE}  conserves the total mass, momentum and energy i.e.
\begin{equation}\label{conservation laws formal} \partial_t\int_{\R^6}f\phi\,dv\,dx=0,\quad \phi\in\{1,v,|v|^2\}.
\end{equation}

Now we focus on two ways of equivalently expressing the collisional operator that will be useful throughout the paper.  

\subsubsection{Multilinear decomposition} \label{subsection:multilin}
Instead of further decomposing the gain and loss operators into multilinear terms in the obvious way, we will decompose them taking advantage of the symmetries of the equation. In our context, that  simple observation is of crucial importance in controlling the nonlinear effects caused by the resonances.

More specifically, let us introduce the sharp cutoff $b=\mathds{1}_{(0,+\infty)}$. Then, we can write
\begin{align*}
\mathcal{G}[f]&=\int_{\R^9}\delta(\Sigma)\delta(\Omega)f_2f_3(f+f_1)b\left(\left(v-v_1\right)\cdot\left(v_2-v_3\right)\right)\,dv_1\,dv_2\,dv_3\\
&\hspace{1cm}+\int_{\R^9}\delta(\Sigma)\delta(\Omega)f_2f_3(f+f_1)b\left(\left(v_1-v\right)\cdot\left(v_2-v_3\right)\right)\,dv_1\,dv_2\,dv_3\\
&=2\int_{\R^9}\delta(\Sigma)\delta(\Omega) f_2f_3(f+f_1)b\left(\left(v-v_1\right)\cdot\left(v_2-v_3\right)\right)\,dv_1\,dv_2\,dv_3,
\end{align*}
where to obtain the last line we exchanged $v_2$ with $v_3$ in the second integrand and used symmetry of the product $f_2f_3$.

Hence, we can write  
\begin{equation}\label{gain decomposed}
    \mathcal{G}[f]=\mathcal{G}_1[f,f,f]+\mathcal{G}_2[f,f,f],
\end{equation}
where
we use the multilinear notations
\begin{align}
\mathcal{G}_1[f,g,h]&:=  2\int_{\R^9}\delta(\Sigma)\delta(\Omega)f_1g_2h_3 \, b\left(\left(v-v_1\right)\cdot\left(v_2-v_3\right)\right)\,dv_1\,dv_2\,dv_3\label{G_1},  \\
\mathcal{G}_2[f,g,h]&:=  2\int_{\R^9}\delta(\Sigma)\delta(\Omega)fg_2h_3 \, b\left(\left(v-v_1\right)\cdot\left(v_2-v_3\right)\right)\,dv_1\,dv_2\,dv_3\label{G_2}.
\end{align}
With an identical argument, using the symmetry of the sum $f_2+f_3$ instead, we can write
\begin{equation}\label{loss decomposed}
   \mathcal{L}[f]=\mathcal{L}_1[f,f,f]+\mathcal{L}_2[f,f,f], 
\end{equation}
where
\begin{align}
\mathcal{L}_1[f,g,h]&:=  2\int_{\R^9}\delta(\Sigma)\delta(\Omega)fg_1h_3 \, b\left(\left(v-v_1\right)\cdot\left(v_2-v_3\right)\right)\,dv_1\,dv_2\,dv_3\label{L_1},  \\
\mathcal{L}_2[f,g,h]&:=  2\int_{\R^9}\delta(\Sigma)\delta(\Omega)fg_1h_2 \, b\left(\left(v-v_1\right)\cdot\left(v_2-v_3\right)\right)\,dv_1\,dv_2\,dv_3\label{L_2}.
\end{align}
It is evident that the operators $\mathcal{G}_1,\mathcal{G}_2,\mathcal{L}_1,\mathcal{L}_2$ are multilinear, positive, and increasing for non-negative inputs.

Under this notation, the collisional operator can be equivalently written as
\begin{equation}\label{collisional operator multilinear}
\begin{aligned}
  \mathcal{C}[f] &=   \mathcal{G}_1[f,f,f] + \mathcal{G}_2[f,f,f] - \mathcal{L}_1[f,f,f] - \mathcal{L}_2[f,f,f].
\end{aligned}
\end{equation}
Expression \eqref{collisional operator multilinear} will be useful for estimating the impact of the resonances in the equation.
\subsubsection{Collision frequency}
Another important aspect for our context is that the loss operator  is local in $v$, i.e. we have
\begin{equation}\label{locality of loss}
\mathcal{L}[f]=f\mathcal{R}[f,f],
\end{equation}
where  
\begin{align}\label{collision frequency}
\mathcal{R}[g,h]:=\int_{\R^9} \delta(\Sigma)\delta(\Omega) g_1(h_2+h_3)\,dv_1\,dv_2\,dv_3,
\end{align}
can be interpreted as the collision frequency. We note that $\mathcal{R}$ is also a positive operator and increasing for non-negative inputs, however it is not linear in the second argument. 

With this notation, the collisional operator $\mathcal{C}[f]$ can be equivalently written as
\begin{equation}\label{collisional operator frequency}
\mathcal{C}[f]=\mathcal{G}[f]-f\mathcal{R}[f,f]    
\end{equation}
Expression \eqref{collision frequency} will be useful for exploiting the monotonicity properties of the equation.

Finally we record one last identity, proved exactly like \eqref{gain decomposed}
\begin{align} \label{collision equal loss}
    f \mathcal{R}[g,g] = \mathcal{L}_1[f,g,g] + \mathcal{L}_2[f,g,g].
\end{align}
\color{black}

\subsection{Parametrization of the resonant manifolds}
In this subsection we parametrize the resonant manifolds, and convert the operators to quantum Boltzmann type operators. This is possible due to the Laplacian dispersion relation. In other words, we can view the interaction of $4$ waves with Laplacian dispersion relation in $d=3$ as the collision between two incoming hard spheres. With this parametrization in hand, we will be able to address the weighted $L^p$ boundness of the collisional operators.

Using Lemma A.7. from \cite{AmMiPaTa24}, we may write
\begin{align}
\mathcal{G}_1[f,g,h]&=\frac{1}{4}\int_{\R^3\times\S^2}|v-v_1|f(v_1)g(v^*)h(v^*_1)b\left(\left(v-v_1\right)\cdot\sigma\right)\,d\sigma\,dv_1    \label{G_1 parametrized}\\
\mathcal{G}_2[f,g,h]&=\frac{1}{4}f(v)\int_{\R^3\times\S^2}|v-v_1|g(v^*)h(v^*_1)b\left(\left(v-v_1\right)\cdot\sigma\right)\,d\sigma\,dv_1    \label{G_2 parametrized}\\
\mathcal{L}_1[f,g,h]&=\frac{1}{4}f(v)\int_{\R^3\times\S^2}|v-v_1|g(v_1)h(v^*_1)b\left(\left(v-v_1\right)\cdot\sigma\right)\,d\sigma\,dv_1    \label{L_2 parametrized}\\
\mathcal{L}_2[f,g,h]&=\frac{1}{4}f(v)\int_{\R^3\times\S^2}|v-v_1|g(v_1)h(v^*)b\left(\left(v-v_1\right)\cdot\sigma\right)\,d\sigma\,dv_1    \label{L_1 parametrized},
\end{align}
where given $\sigma\in \S^2$ we denote
\begin{align}
 v^*&=\frac{v+v_1}{2}+\frac{|v-v_1|}{2}\sigma\label{v^*},\\
 v_1^*&=\frac{v+v_1}{2}-\frac{|v-v_1|}{2}\sigma\label{v_1^*}.
\end{align}
One can easily verify that in accordance with the resonant conditions,  momentum and energy is conserved i.e. given $\sigma\in \S^2$, we have
\begin{align}
v^*+v_1^*&=v+v_1,\label{conservation of momentum}\\
|v^*|^2+|v_1^*|^2&=|v|^2+|v_1|^2 \label{conservation of energy}.
\end{align}
Moreover, there hold the relations
\begin{align}
|v^*-v_1^*|&=|v-v_1|,\label{conservation of relative velocities}\\
(v^*-v_1^*)\cdot\sigma&=-(v-v_1)\cdot\sigma.\label{microreversibilty}
\end{align}
Finally, given $\sigma\in \S^2$ the transformation $T_\sigma:(v,v_1)\mapsto (v^*,v_1^*)$ is a linear involution of $\R^6$.

\subsection{Organization of the paper}
We start by proving trilinear bounds for the collision operator in Section \ref{section:trilinear}. This enables us to run a bootstrap-based argument to prove global well-posedness of \eqref{KWE} in Section \ref{section:GWP}. As a byproduct of the proof we obtain that the solution satisfies dispersive estimates. It also shows propagation of $L^1_{xv}$ -- moments, from which we also deduce that solutions satisfy the natural conservation laws of mass, momentum and energy. We then study the asymptotic behavior of solutions, and show that they scatter in Section \ref{section:scattering}. Moreover we establish various properties of the corresponding wave operators to describe which states can be reached by the solution. Finally we prove that the nonlinear evolution preserves positivity in Section \ref{sec:non-negative solution}. We stress that this is a non-perturbative result which requires to exploit monotonicity properties of the collision operator. 

\subsection{Notations}
When writing $A \lesssim B,$ we mean that there exists a numerical constant $C > 0$ such that $A \leq C B.$ 

We write $A \simeq B$ to signify that both $A \lesssim B$ and $B \lesssim A$ hold.

We use the standard japanese bracket notation: $\langle v \rangle : = \sqrt{1 + \vert v \vert^2},$ where $\vert \cdot \vert$ denotes the $\ell^2$ norm of the vector $v \in \mathbb{R}^d.$ 

For $v \in \mathbb{R}^3,$  we denote $\hat{v} : = v/\vert v \vert.$

We use Lebesgue spaces with mixed-norms, which we denote (for $1 \leq p,q \leq +\infty$):
\begin{align*}
    \Vert f \Vert_{L^p_x L^q_v} := \bigg( \int_{\mathbb{R}^3} \Vert f(x,\cdot) \Vert_{L^{q}_v}^p \, dx \bigg)^{\frac{1}{p}} .
\end{align*}
We will also use the shorthand $\Vert f \Vert_{L^p_{xv}} := \Vert f \Vert_{L^p_{x} L^p_v}.$

\section{Weighted $L^p$ boundedness of the collision operator} \label{section:trilinear}
In this section we prove trilinear estimates for the operators that make up the nonlinearity. 

We start by proving technical lemmas, chief among which a crucial angular averaging estimate, see Lemma \ref{singint} in the next section. These facts are then used to prove boundedness of the nonlinear terms on weighted Lebesgue spaces in the next sections. 

\subsection{Preliminary results}
We begin with the angular averaging lemma. This is key in controlling the linear growth of the hard-sphere kernel.

 \begin{lemma} \label{singint}
 Let $\gamma\in[0;2]$ and
\begin{align*}
F(v,v_1) : = \int_{\S^{2}}\frac{|v-v_1|^{ \gamma}}{\l v^*\r^{3}}\,d\sigma,\quad F_1(v,v_1) : = \int_{\S^{2}}\frac{|v-v_1|^{ \gamma }}{\l v_1^*\r^{3}}\,d\sigma ,
\end{align*}
where 
\begin{align*}
  v^*&=\frac{v+v_1}{2}+\frac{|v-v_1|}{2}\sigma,\\
  v_1^*&=\frac{v+v_1}{2}-\frac{|v-v_1|}{2}\sigma .
\end{align*}
Then, we have
\begin{align}\label{L inf bound}
\|F(v,v_1)\|_{L^\infty_{v,v_1}},\,\|F_1(v,v_1)\|_{L^\infty_{v,v_1}}\lesssim 1.
\end{align}

\end{lemma}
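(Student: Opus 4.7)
The plan is to reduce the angular integral to an explicit one-variable integral using the structure of the parametrization, and then carefully estimate the resulting closed-form expression to exhibit a cancellation between the factor $|v-v_1|^\gamma$ in the numerator and the growth of the denominator on the sphere where $v^*$ lives.

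First I would set $a = (v+v_1)/2$ and $R = |v-v_1|/2$, so that $v^* = a + R\sigma$ and
\[
\langle v^*\rangle^2 \;=\; 1 + |a|^2 + R^2 + 2R\, a\cdot\sigma.
\]
Choosing spherical coordinates with axis aligned with $\hat a$ (the case $a=0$ being even simpler), the angular integral becomes
\[
\int_{\mathbb{S}^2} \frac{d\sigma}{\langle v^*\rangle^3} \;=\; 2\pi \int_{-1}^{1}\frac{dt}{(A + 2R|a|\,t)^{3/2}}, \qquad A := 1 + |a|^2 + R^2.
\]
This is an elementary antiderivative and yields, after a short manipulation using $A \pm 2R|a| = 1 + (|a|\mp R)^2$,
\[
\int_{\mathbb{S}^2} \frac{d\sigma}{\langle v^*\rangle^3} \;=\; \frac{8\pi}{\bigl(\sqrt{1+(|a|+R)^2} + \sqrt{1+(|a|-R)^2}\bigr)\sqrt{\bigl(1+(|a|-R)^2\bigr)\bigl(1+(|a|+R)^2\bigr)}}.
\]

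Now I would estimate the denominator from below. Using $\sqrt{1+x^2}\ge \max(1,|x|)$ one has $\sqrt{1+(|a|+R)^2} \ge \max(1,R)$ and $\sqrt{1+(|a|-R)^2} \ge 1$, so the denominator is bounded below by $\max(1,R)\cdot \max(1,R) = \max(1,R^2)$. Multiplying by $|v-v_1|^\gamma = (2R)^\gamma$ and distinguishing $R\le 1$ from $R\ge 1$, the bound
\[
F(v,v_1) \;\lesssim\; \frac{R^\gamma}{\max(1,R^2)}
\]
is uniformly $O(1)$ precisely because $\gamma \in [0,2]$. The $F_1$ estimate is then obtained at no extra cost by the change of variable $\sigma\mapsto -\sigma$, which exchanges $v^*$ and $v_1^*$ in the integrand.

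The main technical point to watch is the degeneration of $\sqrt{1+(|a|-R)^2}$ when $|a|\approx R$ (equivalently, when $|v|$ or $|v_1|$ is small), which is the regime in which $v^*$ can come close to the origin as $\sigma$ varies. The closed-form expression handles this: the remaining factor $\sqrt{1+(|a|+R)^2}$ stays at least of order $R$, and combined with the outer sum it still produces the requisite factor of $R^2$. The threshold $\gamma = 2$ is sharp for this kind of argument, which is why the statement is limited to $\gamma \in [0,2]$.
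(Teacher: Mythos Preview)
Your proof is correct and follows essentially the same route as the paper's: both reduce the angular integral to a one-variable integral via spherical coordinates aligned with the center of the sphere $a=(v+v_1)/2$, compute the same closed form (the paper writes it in the variables $E=|v|^2+|v_1|^2$ and $|u||V|$, you in $|a|$ and $R$), and then estimate the resulting expression to extract a factor $\gtrsim R^2$ in the denominator that absorbs $|v-v_1|^\gamma$ for $\gamma\in[0,2]$. The paper obtains the slightly sharper intermediate bound $I\lesssim (1+E)^{-1}$ (which also captures decay in $|a|$), but for the stated $L^\infty$ conclusion your estimate $I\lesssim \max(1,R^2)^{-1}$ is entirely sufficient; the $\sigma\mapsto -\sigma$ reduction for $F_1$ is identical.
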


\begin{proof}

    Fix $v,v_1\in\R^3$. 
Substituting $\sigma\mapsto -\sigma$ we see that $F_1(v,v_1)=F(v,v_1)$, so it suffices to prove the claim for $F(v,v_1)$.

Let us write
$$v^*=V+\frac{|u|}{2}\sigma,$$
where $V=\frac{v+v_1}{2}$ and $u=v-v_1$. Let us also denote $E=|v|^2+|v_1|^2$.
We have $$\l v^*\r^2=1+|V|^2+\frac{|u|^2}{4}+|u||V|(\hat{V}\cdot\sigma).$$
Moreover, 

$$|V|^2+\frac{|u|^2}{4}=\frac{1}{4}(|v+v_1|^2+|v-v_1|^2)=\frac{|v|^2+|v_1|^2}{2}=\frac{E}{2},$$
thus $$\l v^*\r^2= 1+\frac{E}{2}+|u||V|(\hat{V}\cdot\sigma).$$

Let us compute
$$I(v,v_1)=\int_{\S^{2}}\frac{1}{\l v^*\r^3}\,d\sigma .$$
Applying the spherical coordinates integration formula
$$\int_{\S^2}b(\hat{V}\cdot\sigma)\,d\sigma=2\pi \int_{0}^\pi b(\cos\theta)\sin\theta\,d\theta=2\pi\int_{-1}^1 b(z)\,dz$$
for
$$b(z)=\left(1+\frac{E}{2}+|u||V|z\right)^{-3/2},$$
which is integrable in $[-1,1]$ since
$$|u||V|=2\frac{|u|}{2}|V|\leq \frac{|u|^2}{4}+|V|^2=\frac{E}{2},$$
we obtain
\begin{align*}
    I(v,v_1)&\simeq\frac{1}{|u||V|}\left(\left(1+\frac{E}{2}-|u||V|\right)^{-1/2}-\left(1+\frac{E}{2}+|u||V|\right)^{-1/2}\right)\\
    &=\frac{1}{|u||V|}\frac{\left(1+\frac{E}{2}+|u||V|\right)^{1/2}-\left(1+\frac{E}{2}-|u||V|\right)^{1/2}}{\left(1+\frac{E}{2}-|u||V|\right)^{1/2}\left(1+\frac{E}{2}+|u||V|\right)^{1/2}}\\
    &=\frac{1}{\left(1+\frac{E}{2}-|u||V|\right)^{1/2}\left(1+\frac{E}{2}+|u||V|\right)^{1/2}\left(\left(1+\frac{E}{2}-|u||V|\right)^{1/2}+\left(1+\frac{E}{2}+|u||V|\right)^{1/2}\right)}\\
    &\lesssim \frac{1}{1+E},
\end{align*}
using again $|u||V|\leq E/2$.

Now the above inequality, triangle inequality, and the fact that $\gamma\in [0,2]$ yield
$$F(v,v_1)=|v-v_1|^\gamma I(v,v_1)\lesssim \frac{|v|^\gamma+|v_1|^\gamma}{1+|v|^2+|v_1|^2}\leq \l v\r^{\gamma-2}+\l v_1\r^{\gamma-2} \lesssim 1,$$
and \eqref{L inf bound} follows.
The proof is complete.
\end{proof}

We also record some better known results, starting with the following which was used in \cite{Arsenio}. For convenience of the reader we provide a proof.

\begin{lemma} \label{chgvarkin}
For $\sigma\in\S^{2}$, we define the map 
\begin{align}\label{transition map}
   \nu= R_{\sigma}(u) :=  \frac{u}{2} +  \frac{\vert u \vert}{2} \sigma.
\end{align}
Then $R_\sigma$ is a diffeomorphism from $\lbrace u\in\R^3: u \cdot \sigma \neq -  \vert u \vert \rbrace$ onto $\lbrace \nu\in\R^3:  \nu \cdot \sigma > 0 \rbrace$ with inverse
\begin{align}\label{inverse function}
    u=\big(R_{\sigma}\big)^{-1}(\nu) = 2  \nu  - \frac{\vert \nu \vert^2}{\sigma \cdot \nu} \sigma,
\end{align}
and Jacobian 
\begin{equation}\label{Jacobian}
    \jac\,(R_\sigma)^{-1}(\nu)=\frac{4 \vert \nu \vert^2}{(\sigma \cdot \nu)^2}.
\end{equation}
Also note that 
\begin{align} 
|u|&=\frac{|R_{\sigma}(u)|^2}{(R_{\sigma}(u)\cdot\sigma)}\label{magnitude},\\
\hat{u} \cdot \sigma &=  \bigg(\frac{2 \big(R_{\sigma}(u)\cdot \sigma\big)^2}{\vert R_{\sigma}(u) \vert^2} -1\bigg) . \label{angle}
\end{align}
\end{lemma}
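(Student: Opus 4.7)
The plan is to verify the four assertions (diffeomorphism property, inverse formula, Jacobian, and the identities \eqref{magnitude}–\eqref{angle}) by a sequence of direct computations. No deep idea is needed; the only care required is bookkeeping between $|u|$ and $\nu\cdot\sigma$ when inverting.

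The first step is to check that $R_\sigma$ sends its stated domain into the stated codomain. Taking the inner product with $\sigma$ gives $R_\sigma(u)\cdot\sigma=\tfrac{1}{2}(u\cdot\sigma+|u|)$, which is strictly positive iff $u\cdot\sigma\neq-|u|$, matching the hypothesis. Then I derive both the inverse formula and identity \eqref{magnitude} in a single stroke: setting $\nu=R_\sigma(u)$ and expanding the squared modulus gives
\begin{equation*}
|\nu|^2=\tfrac14|u|^2+\tfrac12|u|(u\cdot\sigma)+\tfrac14|u|^2=\tfrac12|u|\bigl(|u|+u\cdot\sigma\bigr)=|u|\,(\nu\cdot\sigma),
\end{equation*}
so $|u|=|\nu|^2/(\nu\cdot\sigma)$, which is \eqref{magnitude}, and substituting this into $\nu=u/2+|u|\sigma/2$ yields $u=2\nu-\tfrac{|\nu|^2}{\nu\cdot\sigma}\sigma$, the claimed inverse.

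Next I verify that this candidate is a two-sided inverse by plugging each formula into the other: for example, applying $R_\sigma$ to $2\nu-\tfrac{|\nu|^2}{\nu\cdot\sigma}\sigma$, one computes its modulus via the identity already established and checks that the result collapses back to $\nu$ when $\nu\cdot\sigma>0$. Smoothness on the two open sets is immediate from the explicit formulas, giving the diffeomorphism property. Identity \eqref{angle} then follows by combining \eqref{magnitude} with $u\cdot\sigma=2(\nu\cdot\sigma)-|\nu|^2/(\nu\cdot\sigma)$:
\begin{equation*}
\hat{u}\cdot\sigma=\frac{u\cdot\sigma}{|u|}=\frac{\bigl(2(\nu\cdot\sigma)^2-|\nu|^2\bigr)/(\nu\cdot\sigma)}{|\nu|^2/(\nu\cdot\sigma)}=\frac{2(\nu\cdot\sigma)^2}{|\nu|^2}-1.
\end{equation*}

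Finally, for the Jacobian I write $(R_\sigma)^{-1}(\nu)=2\nu-g(\nu)\sigma$ with $g(\nu)=|\nu|^2/(\nu\cdot\sigma)$, so that the Jacobian matrix is $J(\nu)=2I-\sigma\,\nabla g(\nu)^{\top}$, a rank-one perturbation of $2I$. The matrix determinant lemma gives $\det J(\nu)=2^3\bigl(1-\tfrac12\,\nabla g(\nu)\cdot\sigma\bigr)$. A direct computation yields
\begin{equation*}
\nabla g(\nu)=\frac{2\nu}{\nu\cdot\sigma}-\frac{|\nu|^2}{(\nu\cdot\sigma)^2}\sigma,\qquad \nabla g(\nu)\cdot\sigma=2-\frac{|\nu|^2}{(\nu\cdot\sigma)^2},
\end{equation*}
so $\det J(\nu)=4|\nu|^2/(\nu\cdot\sigma)^2>0$, which equals its absolute value and matches \eqref{Jacobian}. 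The only step requiring slight attention is the matrix-determinant computation for the Jacobian; everything else is algebraic manipulation.
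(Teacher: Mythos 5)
Your proof is correct and follows essentially the same approach as the paper: the inverse and identities \eqref{magnitude}--\eqref{angle} are obtained by direct algebraic substitution, and the Jacobian is computed by viewing $D(R_\sigma)^{-1}(\nu)$ as a rank-one perturbation of $2I_3$ and applying the matrix determinant lemma. Your presentation actually fills in some details (the domain/codomain verification via $R_\sigma(u)\cdot\sigma=\tfrac12(u\cdot\sigma+|u|)$, and the clean derivation of the inverse and \eqref{magnitude} simultaneously from $|\nu|^2=|u|(\nu\cdot\sigma)$) that the paper compresses into ``follows immediately by substitution.''
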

\begin{proof}

The fact that $R_\sigma$ is a diffeomorphism from $\lbrace u\in\R^3: u \cdot \sigma \neq -  \vert u \vert \rbrace$ onto $\lbrace \nu\in\R^3:  \nu \cdot \sigma > 0 \rbrace$ with inverse given by \eqref{inverse function} follows immediately by substitution.

In order to compute the Jacobian, we differentiate \eqref{inverse function} in $\nu$, to obtain
$$D (R_{\sigma})^{-1}(\nu)=2 I_3-\sigma\nabla^Tf(\nu), $$
where $f(\nu)=\frac{|\nu|^2}{(\nu\cdot\sigma)}$. Thus, we obtain\footnote{we use the Linear Algebra identity $\det(\lambda I_n+vw^T)=\lambda^n(1+\lambda^{-1}v\cdot w)$, where $v,w\in\R^n$, $n\in\mathbb{N}$ and $\lambda\neq 0$.}
\begin{equation}\label{Jacobian proof}
    \jac (R_\sigma)^{-1}(\nu)=\det\left(2 I_3-\sigma\nabla^T f(\nu)\right)=8\left(1-\frac{1}{2}\nabla f(\nu)\cdot\sigma\right).
\end{equation}
We readily compute
$$\nabla f(\nu)\cdot\sigma=\frac{2(\nu\cdot\sigma)^2-|\nu|^2}{(\nu\cdot\sigma)^2}=2-\frac{|\nu|^2}{(\nu\cdot\sigma)^2}$$
which, combined with \eqref{Jacobian proof}, gives \eqref{Jacobian}.

Finally, expression \eqref{magnitude} follows directly by \eqref{transition map}, and expression \eqref{angle} follows by combining \eqref{magnitude} and \eqref{transition map}.  
\end{proof}

We now deduce integrability estimates for post-collision quantities from this result. We stress that the presence of the cut-off $b$, introduced in our manipulations from Subsection \ref{subsection:multilin}, is necessary to avoid the singularity in the change of variables of the previous Lemma \ref{chgvarkin}. 

\begin{lemma}\label{pre average lemma} The following estimates hold
\begin{align}
\sup_{v\in\R^3}\int_{\R^3\times \S^2}\,|\psi(v_1^*)|b\left(\frac{v-v_1}{|v-v_1|}\cdot\sigma\right)\,dv_1\,d \sigma&\lesssim \|\psi\|_{L^{1}}\label{estimate for v_1 pre},\\
\sup_{v_1\in\R^3}\int_{\R^3\times \S^2}\,|\psi(v^*)|b\left(\frac{v-v_1}{|v-v_1|}\cdot\sigma\right)\,dv\,d \sigma&\lesssim \|\psi\|_{L^{1}},\label{estimate for v pre}
\end{align}
where we denote  $v^*=\frac{v+v_1}{2}+\frac{|v-v_1|}{2}\sigma$, $v_1^*=\frac{v+v_1}{2}-\frac{|v-v_1|}{2}\sigma$. 
\end{lemma}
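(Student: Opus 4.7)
The two estimates are symmetric, so I will focus on the first and indicate how the second follows. The strategy is to change variables from $v_1$ (resp. $v$) to the post-collision displacement $\nu = R_\sigma(v - v_1)$ using Lemma \ref{chgvarkin}, and to exploit the fact that the cutoff $b$ confines us to a region where the (otherwise singular) Jacobian is uniformly bounded.

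Fix $v \in \R^3$ and $\sigma \in \S^2$. I would first perform the translation $v_1 \mapsto w := v - v_1$, which has unit Jacobian, and rewrite the post-collisional velocity as
\begin{equation*}
v_1^* \;=\; v \;-\; \frac{w}{2} \;-\; \frac{|w|}{2}\sigma \;=\; v - R_\sigma(w).
\end{equation*}
Since $b(\hat w\cdot\sigma)$ vanishes outside $\{\hat w\cdot\sigma>0\}$, the integration takes place on the subset of the domain of $R_\sigma$ described in Lemma \ref{chgvarkin}, and I may legitimately set $\nu = R_\sigma(w)$. By \eqref{Jacobian}, the change of variables produces a Jacobian $4|\nu|^2/(\sigma\cdot\nu)^2$, and by \eqref{angle} the cutoff condition $\hat w\cdot\sigma>0$ translates exactly into
\begin{equation*}
(\sigma\cdot\nu)^2 \;>\; \tfrac{1}{2}|\nu|^2.
\end{equation*}

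This is the key observation: on the region carved out by the cutoff, the Jacobian is pointwise bounded by $8$. Hence, after relabeling $\tilde\nu = v - \nu$,
\begin{equation*}
\int_{\R^3} |\psi(v_1^*)|\,b(\hat w\cdot\sigma)\,dv_1
\;\le\; 8 \int_{\{\nu\cdot\sigma>|\nu|/\sqrt{2}\}} |\psi(v-\nu)|\,d\nu
\;\le\; 8\,\|\psi\|_{L^1},
\end{equation*}
which is uniform in $v$ and $\sigma$. Integrating over $\sigma\in\S^2$ yields \eqref{estimate for v_1 pre}. The second estimate \eqref{estimate for v pre} is handled identically: for fixed $v_1$, set $w = v - v_1$, note that $v^* = v_1 + R_\sigma(w)$, apply the same change of variables and the same pointwise bound on the Jacobian.

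The only substantive point — which is really why the cutoff $b$ was inserted in Subsection \ref{subsection:multilin} — is to verify the matching between the support condition $\hat w\cdot\sigma>0$ and the region in $\nu$-space where the Jacobian is controlled; once this is in place via \eqref{angle}, the rest is a direct change of variables and an $L^1$ translation estimate. I do not anticipate any other obstacle.
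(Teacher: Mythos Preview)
Your proof is correct and follows essentially the same approach as the paper: both fix $v$, substitute $u=v-v_1$, and apply the change of variables $\nu=R_\sigma(u)$ from Lemma \ref{chgvarkin}, using the cutoff $b$ to stay away from the Jacobian singularity. The only cosmetic difference is that you bound the Jacobian $4|\nu|^2/(\sigma\cdot\nu)^2$ pointwise by $8$ on the support of the cutoff and then integrate trivially over $\S^2$, whereas the paper first applies Fubini and computes the $\sigma$-integral $\int_{\S^2} b(2(\hat\nu\cdot\sigma)^2-1)\,(\hat\nu\cdot\sigma)^{-2}\,d\sigma$ explicitly via spherical coordinates; your route is slightly more direct.
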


\begin{proof}
We first prove \eqref{estimate for v_1 pre}. Let 
$$I(v)=\int_{\R^3\times \S^2}\,|\psi(v_1^*)|b\left(\frac{v-v_1}{|v-v_1|}\cdot\sigma\right)\,dv_1\,d \sigma.$$
Notice that we can write $v_1^*=v-\frac{v-v_1}{2}-\frac{|v-v_1|}{2}\sigma$.
Setting $u=v-v_1$, and using Lemma \ref{chgvarkin},  we obtain
\begin{align*}
I(v)&=  \int_{\R^3\times\S^2}\left|\psi\left(v-\frac{u}{2}-\frac{|u|}{2}\sigma\right)\right|\,b(\hat{u}\cdot\sigma)\,du\,d\sigma \\
&=\int_{\S^2}\int_{\R^3}|\psi(v-R_\sigma(u))\,|b\left(\frac{2 \big(R_{\sigma}(u)\cdot \sigma\big)^2}{\vert R_{\sigma}(u) \vert^2} -1\right)\,du\,d\sigma\\
&\simeq \int_{\R^3}|\psi(v-\nu)|\int_{\S^2}\frac{b\left(2(\hat{\nu}\cdot\sigma)^2-1\right)}{(\hat{\nu}\cdot\sigma)^{2}}\,d\sigma\,d\nu\\
&\simeq  \int_{\R^3}|\psi(v-\nu)|\int_0^\pi \frac{b(\cos 2\theta)}{\cos^{2}\theta}\sin\theta\,d\theta\,d\nu\\
&=\int_{\R^3}|\psi(v-\nu)|\int_{[0,\pi/4]\cup[3\pi/4,\pi]}\frac{\sin\theta}{\cos^{2}\theta}\,d\theta\,d\nu\\
&\simeq \|\psi\|_{L^1}.
\end{align*}

The proof for \eqref{estimate for v pre} follows similarly by  writing $v^*=v_1+\frac{v-v_1}{2}+\frac{|v-v_1|}{2}\sigma$. 
\end{proof}

\subsection{The loss operators}
We are now in a position to prove boundedness results for the collision operator on weighted Lebesgue spaces. We start with loss operators.  

\begin{lemma}
We have, for $\mathcal{T} \in \lbrace \mathcal{L}_1, \mathcal{L}_2 \rbrace$ and $p \geq 1$ and $l \geq 0$

\begin{align} \label{trilossmain}
    \Vert \langle v \rangle^l \mathcal{T}[f,g,h] \Vert_{L^{p}_v} \lesssim \Vert \langle v \rangle^l f \Vert_{L^p_v} \Vert g \Vert_{L^1_v} \Vert \langle v \rangle^3 h \Vert_{L^{\infty}_v}.
\end{align}
Moreover 
\begin{align} \label{triloss1}
    \Vert \mathcal{L}_1[f,g,h] \Vert_{L^1_v} \lesssim \Vert \langle v \rangle f \Vert_{L^1_v} \Vert \langle v \rangle g \Vert_{L^{\infty}_v} \Vert h \Vert_{L^1_v},
\end{align}
and 
\begin{align} \label{triloss2}
    \Vert \mathcal{L}_2[f,g,h] \Vert_{L^1_v} \lesssim \Vert \langle v \rangle f \Vert_{L^{\infty}_v} \Vert \langle v \rangle g \Vert_{L^1_v} \Vert h \Vert_{L^1_v}.
\end{align}

\end{lemma}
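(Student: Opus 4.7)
The plan is to work directly from the parametrized forms \eqref{L_2 parametrized} and \eqref{L_1 parametrized}, which explicitly factor out $f(v)$, and to handle the hard-sphere weight $|v-v_1|$ in two different ways depending on whether the outer integration is in $L^p_v$ (for \eqref{trilossmain}) or in $L^1_v$ (for \eqref{triloss1}--\eqref{triloss2}).

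For the main estimate \eqref{trilossmain}, I start from the parametrization and pull out $\langle v\rangle^l f(v)$, which will ultimately produce the $L^p_v$ factor. For the third argument I use the pointwise bound $|h(v_1^*)|\leq \langle v_1^*\rangle^{-3}\|\langle v\rangle^3 h\|_{L^\infty_v}$ (respectively $|h(v^*)|\leq \langle v^*\rangle^{-3}\|\langle v\rangle^3 h\|_{L^\infty_v}$ for $\mathcal{L}_2$). This reduces the inner $\sigma$-integral to
\[
\int_{\S^2}\frac{|v-v_1|}{\langle v_1^*\rangle^3}\,d\sigma \quad\text{or}\quad \int_{\S^2}\frac{|v-v_1|}{\langle v^*\rangle^3}\,d\sigma,
\]
both of which are $\lesssim 1$ uniformly in $v,v_1$ by Lemma \ref{singint} applied with $\gamma=1$. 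The residual $v_1$-integral gives $\|g\|_{L^1_v}$, and taking the $L^p_v$-norm yields the claimed estimate.

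For \eqref{triloss1} and \eqref{triloss2}, the outer integration in $v$ prevents us from factoring $f(v)$ out as an $L^p$ factor, so the angular averaging Lemma \ref{singint} is no longer the right tool. Instead I split the kernel via $|v-v_1|\leq \langle v\rangle\langle v_1\rangle$, which distributes one weight onto each of the two pre-collisional arguments. For \eqref{triloss1}, starting from \eqref{L_2 parametrized} (the one with $h(v_1^*)$), I pull $\|\langle v\rangle g\|_{L^\infty_v}$ out of the $v_1$-integral, leaving
\[
\|\mathcal{L}_1[f,g,h]\|_{L^1_v}\lesssim \|\langle v\rangle g\|_{L^\infty_v}\int_{\R^3}\langle v\rangle |f(v)|\left(\int_{\R^3\times\S^2}|h(v_1^*)|\,b((v-v_1)\cdot\sigma)\,d\sigma\,dv_1\right)dv,
\]
and the inner $(v_1,\sigma)$-integral is bounded by $\|h\|_{L^1_v}$ uniformly in $v$ thanks to \eqref{estimate for v_1 pre} of Lemma \ref{pre average lemma}. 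The remaining $v$-integral gives $\|\langle v\rangle f\|_{L^1_v}$. For \eqref{triloss2}, the same scheme applies from \eqref{L_1 parametrized} (where the post-collision factor is $h(v^*)$): I extract $\|\langle v\rangle f\|_{L^\infty_v}$, keep $\langle v_1\rangle g(v_1)$ in $L^1_{v_1}$, and apply \eqref{estimate for v pre} of Lemma \ref{pre average lemma} to control $\int |h(v^*)|b\,dv\,d\sigma$ uniformly in $v_1$.

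There is no real obstacle here — the argument is essentially a bookkeeping exercise once the parametrization and the two preliminary lemmas are available. The one conceptual point worth emphasizing is the dichotomy in how the hard-sphere weight $|v-v_1|$ is absorbed: in the $L^p$ estimate its linear growth is swallowed by the cubic decay of $\langle v_1^*\rangle^{-3}$ after angular averaging (Lemma \ref{singint} with $\gamma=1$), while in the $L^1$ estimates it is split crudely by $|v-v_1|\leq \langle v\rangle\langle v_1\rangle$ and the post-collision factor is then handled by the change-of-variable estimate of Lemma \ref{pre average lemma}. These two mechanisms together are exactly what makes the loss operator bounded on the weighted Lebesgue spaces relevant for the bootstrap.
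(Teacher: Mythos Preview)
Your proposal is correct and follows essentially the same approach as the paper's proof: for \eqref{trilossmain} both arguments pull out $\langle v\rangle^l f$ in $L^p_v$ and invoke Lemma~\ref{singint} with $\gamma=1$ to absorb the hard-sphere weight, and for \eqref{triloss1}--\eqref{triloss2} both use the crude split $|v-v_1|\lesssim\langle v\rangle\langle v_1\rangle$, extract the appropriate $L^\infty$ factor, and apply Lemma~\ref{pre average lemma} to control the remaining post-collision integral. The only cosmetic difference is the order in which factors are pulled out.
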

\begin{proof}
For $p \geq 1$, we have
\begin{align*}
    \Vert \langle v \rangle^l \mathcal{L}_1[f,g,h] \Vert_{L^{p}_v} & \lesssim \Vert \langle v \rangle^l f \Vert_{L^p_v} \Bigg \Vert \int_{\mathbb{R}^3 \times \mathbb{S}^{2}} g(v_1) \vert v-v_1 \vert h(v_1^{*}) \,  d\sigma dv_1  \Bigg \Vert_{L^{\infty}_v}  \\
    & \lesssim \Vert \langle v \rangle^l f \Vert_{L^p_v} \Vert g \Vert_{L^1_v} \Bigg \Vert \int_{\S^{2}} \langle v_1^{*} \rangle^3 h(v_1^{*}) \frac{\vert v - v_1 \vert }{\langle v_1^{*} \rangle^3}  d\sigma \Bigg \Vert_{L^{\infty}_{v,v_1}}.
\end{align*}
The result follows by Lemma \ref{singint}. The proof for $\mathcal{L}_2$ is identical, substituting $v_1^{*}$ by $v^{*}.$ 
\newline 

 We now focus on proving \eqref{triloss1}. Let us denote $\widetilde{f}(v)=\l v\r f(v)$, $\widetilde{g}(v)=\l v\r g(v)$.  We have
\begin{align*}
\Vert \mathcal{L}_1[f,g,h](v) \Vert_{L^{1}_v} & \lesssim  \Vert \widetilde{g}\Vert_{L^\infty_v}  \int_{\mathbb{R}^6 \times \S^{2}}  \frac{\vert v-v_1 \vert}{\langle v_1 \rangle \langle v \rangle} |\widetilde{f}(v)|\,|h(v_1^{*})|\,b\big(\frac{v-v_1}{\vert v- v_1 \vert} \cdot \sigma \big) \, dv_1 d\sigma \,dv  \\
& \leq  \Vert \widetilde{g}\Vert_{L^\infty_v}  \int_{\mathbb{R}^6 \times \S^{2}} |\widetilde{f}(v)| \,|h(v_1^{*})|\,b\big(\frac{v-v_1}{\vert v- v_1 \vert} \cdot \sigma \big) \, dv_1 d\sigma \,dv\\
&\leq \|\widetilde{g}\|_{L^\infty_v}\|\widetilde{f}\|_{L^1}\left\|  \int_{\mathbb{R}^6 \times \S^{2}}  \,|h(v_1^{*})|\,b\big(\frac{v-v_1}{\vert v- v_1 \vert} \cdot \sigma \big) \, dv_1 d\sigma \,dv\right\|_{L^\infty_v},\\
&\leq \|\widetilde{f}\|_{L^1_v}\|\widetilde{g}\|_{L^\infty_v}\|h\|_{L^1_v},
\end{align*}
where to obtain the second line we used the elementary fact
\begin{align*}
   \frac{|v-v_1|}{\l v \r \l v_1\r}&\leq \frac{|v|+|v_1|}{\l v\r \l v_1\r} \leq \frac{1}{\langle v_1 \rangle} + \frac{1}{\langle v \rangle} \leq 2,
\end{align*}
and to obtain the last line we used \eqref{estimate for v_1 pre} from Lemma \ref{pre average lemma}. 

The proof of \eqref{triloss2} is similar, pulling $\|\widetilde{f}\|_{L^\infty_v}$ out first and then using \eqref{estimate for v pre}.

\end{proof}

\subsection{The gain operators}
In this subsection we prove boundedness results on Lebesgue spaces for the gain operators. The proofs are different from that of the previous subsection. In particular for the loss operators our proofs hold for super-hard potentials $\vert v-v_1 \vert^\gamma, \gamma \in (1;2]$. However this is not the case for the gain operators, see the proof of \eqref{G_1 L2 estimate} below for example. 

\begin{lemma}
For $p\ge 1$ and $l\geq 0$, there holds the estimate
\begin{equation}\label{G_2 weighted estimate}
\|\l v\r^l \mathcal{G}_2[f,g,h]\|_{L^p_v}\leq \|\l v\r^l f\|_{L^p_v}\|\l v\r g\|_{L^\infty_v}\|\l v\r h\|_{L^1_v}    
\end{equation}
\end{lemma}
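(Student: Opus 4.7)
The approach is to use the structure of $\mathcal{G}_2$, in which $f(v)$ factors out of the integral, and then bound the remaining bilinear integral in $g$ and $h$ uniformly in $v$. The $L^p$ estimate will then follow from H\"older.

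Concretely, from \eqref{G_2 parametrized} we have
\begin{align*}
|\mathcal{G}_2[f,g,h](v)|\lesssim |f(v)|\,I(v),\qquad I(v):=\int_{\R^3\times\S^2}|v-v_1|\,|g(v^*)|\,|h(v_1^*)|\,b\!\Big(\tfrac{v-v_1}{|v-v_1|}\cdot\sigma\Big)\,d\sigma\,dv_1,
\end{align*}
so the plan reduces to showing $\|I\|_{L^\infty_v}\lesssim \|\l v\r g\|_{L^\infty_v}\|\l v\r h\|_{L^1_v}$; multiplying through by $\l v\r^l$ and taking the $L^p_v$ norm then yields \eqref{G_2 weighted estimate}.

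To estimate $I(v)$, I would use the conservation of relative velocities \eqref{conservation of relative velocities} to rewrite $|v-v_1|=|v^*-v_1^*|\le |v^*|+|v_1^*|\le 2\l v^*\r\l v_1^*\r$, so that
\begin{align*}
|v-v_1|\,|g(v^*)|\,|h(v_1^*)|\;\le\;2\,\bigl(\l v^*\r|g(v^*)|\bigr)\bigl(\l v_1^*\r|h(v_1^*)|\bigr)\;\le\;2\,\|\l v\r g\|_{L^\infty_v}\,\bigl(\l v_1^*\r|h(v_1^*)|\bigr).
\end{align*}
Plugging this into $I(v)$ and invoking \eqref{estimate for v_1 pre} of Lemma \ref{pre average lemma} with $\psi(w):=\l w\r |h(w)|$ gives
\begin{align*}
I(v)\;\lesssim\;\|\l v\r g\|_{L^\infty_v}\sup_{v\in\R^3}\int_{\R^3\times\S^2}\l v_1^*\r|h(v_1^*)|\,b\!\Big(\tfrac{v-v_1}{|v-v_1|}\cdot\sigma\Big)\,d\sigma\,dv_1\;\lesssim\;\|\l v\r g\|_{L^\infty_v}\|\l v\r h\|_{L^1_v},
\end{align*}
uniformly in $v$, which is the required bound on $I$.

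To conclude, I would simply write
\begin{align*}
\|\l v\r^l\mathcal{G}_2[f,g,h]\|_{L^p_v}\le \|\l v\r^l f\|_{L^p_v}\|I\|_{L^\infty_v}\lesssim \|\l v\r^l f\|_{L^p_v}\|\l v\r g\|_{L^\infty_v}\|\l v\r h\|_{L^1_v}.
\end{align*}
There is no serious obstacle here: because $f$ factors out pointwise from $\mathcal{G}_2$, the key issue is absorbing the linear growth of the hard-sphere kernel $|v-v_1|$, and this is handled cleanly by splitting it as $\l v^*\r\l v_1^*\r$ (using momentum-energy conservation) and then applying the change of variables Lemma \ref{pre average lemma}. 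The same structural decoupling is exactly what was used in \eqref{trilossmain}, which is why the gain estimate for $\mathcal{G}_2$ is strictly easier than the one for $\mathcal{G}_1$ that presumably comes next.
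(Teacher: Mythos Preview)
Your proof is correct and follows essentially the same approach as the paper: factor out $\l v\r^l f$, use conservation of relative velocities \eqref{conservation of relative velocities} to bound $|v-v_1|\le 2\l v^*\r\l v_1^*\r$, pull out $\|\l v\r g\|_{L^\infty_v}$, and apply \eqref{estimate for v_1 pre} of Lemma \ref{pre average lemma} to $\psi=\l\cdot\r|h|$. The only difference is cosmetic (the paper writes the chain of inequalities directly in norm form with $\widetilde g,\widetilde h$ rather than isolating $I(v)$).
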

\begin{proof}
Let us denote $\widetilde{g}(v)=\l v\r g(v)$, $\widetilde{h}(v)=\l v\r h(v)$. We take
\begin{align*}
    \|\l v\r^l\mathcal{G}_2[f,g,h]\|_{L^p_v}&\lesssim\|\l v\r^l f\|_{L^p_v}\left\|\int_{\R^3\times\S^2}\frac{|v-v_1|}{\l v^*\r\l v_1^*\r} \vert \widetilde{g}(v^*) \vert \vert \widetilde{h}(v_1^*) \vert b\left(\frac{v-v_1}{|v-v_1|}\cdot\sigma\right)\,dv_1\,d\sigma\right\|_{L^\infty_v}\\
    & \leq \|\l v\r^l f\|_{L^p_v} \|\widetilde{g}\|_{L^\infty_v} \left\|\int_{\R^3\times\S^2}\frac{|v-v_1|}{\l v^*\r\l v_1^*\r}\vert \widetilde{h}(v_1^*) \vert b\left(\frac{v-v_1}{|v-v_1|}\cdot\sigma\right)\,dv_1\,d\sigma\right\|_{L^\infty_v}\\
    &\lesssim \|\l v\r^l f\|_{L^p_v} \|\widetilde{g}\|_{L^\infty_v} \left\|\int_{\R^3\times\S^2}\vert \widetilde{h}(v_1^*) \vert b\left(\frac{v-v_1}{|v-v_1|}\cdot\sigma\right)\,dv_1\,d\sigma\right\|_{L^\infty_v}\\
    &\leq \|\l v\r^l f\|_{L^p_v} \|\widetilde{g}\|_{L^\infty_v}\|\widetilde{h}\|_{L^1_v},
\end{align*}
where in the last two lines we used \eqref{conservation of relative velocities} to obtain
$$\frac{|v-v_1|}{\l v^*\r \l v_1^*\r}=\frac{|v^*-v_1^*|}{\l v^*\r \l v_1^*\r} \leq 2,$$
and \eqref{estimate for v_1 pre} from Lemma \ref{pre average lemma}.
\end{proof}

\begin{lemma}
For $l \geq 0$, there holds the estimate
\begin{align}\label{G1 weighted estimate}
    \Vert \langle v \rangle^l \mathcal{G}_1[f,g,h] \Vert_{L^{\infty}_v}\lesssim\Vert f \Vert_{L^1_v} \Vert \langle v \rangle^l g \Vert_{L^{\infty}_v} \Vert \langle v \rangle^3 h \Vert_{L^{\infty}_v}+   \|\l v\r^l f\|_{L^\infty_v}\|\l v\r g\|_{L^\infty_v}\|\l v\r h\|_{L^1_v}
\end{align}

\end{lemma}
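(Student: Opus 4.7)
The plan is to fix $v\in\R^{3}$ and estimate $\mathcal{G}_1[f,g,h](v)$ pointwise. The structural input is the energy conservation law $|v|^2+|v_1|^2=|v^*|^2+|v_1^*|^2$ on the resonant manifold, which gives
\begin{align*}
\langle v\rangle^{l}\lesssim \langle v^*\rangle^{l}+\langle v_1^*\rangle^{l}.
\end{align*}
Inserting this into the integral defining $\langle v\rangle^{l}\mathcal{G}_1[f,g,h](v)$ splits it into two pieces $A_1$ and $A_2$, according to whether the weight $\langle v\rangle^{l}$ is absorbed by the post-collisional variable $v^*$ or $v_1^*$.

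For $A_1$, the factor $\langle v^*\rangle^{l}$ merges with $g(v^*)$, bounded by $\|\langle v\rangle^{l}g\|_{L^\infty_v}$. Using in addition $h(v_1^*)\le \|\langle v\rangle^3 h\|_{L^\infty_v}/\langle v_1^*\rangle^3$, we reduce to
\begin{align*}
A_1\lesssim \|\langle v\rangle^{l}g\|_{L^\infty_v}\|\langle v\rangle^3 h\|_{L^\infty_v}\int_{\R^3}f(v_1)\left(\int_{\S^2}\frac{|v-v_1|}{\langle v_1^*\rangle^3}\,b\!\left(\tfrac{v-v_1}{|v-v_1|}\cdot\sigma\right)d\sigma\right)dv_1.
\end{align*}
The inner angular integral is bounded uniformly in $(v,v_1)$ by Lemma \ref{singint} applied to $F_1$ with $\gamma=1$ (equivalently, by the symmetry $\sigma\mapsto -\sigma$ that swaps the roles of $v^*$ and $v_1^*$ in Lemma \ref{singint}). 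The remaining $v_1$-integral is $\|f\|_{L^1_{v_1}}$, delivering the first term of \eqref{G1 weighted estimate}.

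For $A_2$, the weight $\langle v_1^*\rangle^{l}$ now binds to $h(v_1^*)$. The idea is to trade the linear growth $|v-v_1|$ against $\langle v^*\rangle$ (via $|v-v_1|=|v^*-v_1^*|\le \langle v^*\rangle+\langle v_1^*\rangle$) to absorb $\|\langle v\rangle g\|_{L^\infty_v}$ through $g(v^*)\le \|\langle v\rangle g\|_{L^\infty_v}/\langle v^*\rangle$, and then to shift the weight from $v_1^*$ back onto the pre-collisional variable $v_1$ using the Peetre-type inequality $\langle v_1^*\rangle\lesssim \langle v_1^*-v_1\rangle\langle v_1\rangle$ together with the elementary bound $|v_1^*-v_1|\le |v-v_1|$. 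The resulting factor $\langle v_1\rangle^{l}$ then combines with $f(v_1)$ to produce $\|\langle v\rangle^{l}f\|_{L^\infty_v}$, while the leftover $h$-integral is handled by the change of variable $v_1\mapsto v_1^*$ of Lemma \ref{chgvarkin} and the angular averaging bound \eqref{estimate for v_1 pre} of Lemma \ref{pre average lemma}, yielding $\|\langle v\rangle h\|_{L^1_v}$. This gives the second term of \eqref{G1 weighted estimate}.

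The main obstacle is in $A_2$: the combination $|v-v_1|\langle v_1^*\rangle^{l}$ naively demands weight $l+1$ on $g$ or $h$, but the target bound only provides weight $1$. Overcoming this requires using the decay of \emph{all three} factors $f,g,h$ simultaneously and carefully exploiting the pre/post-collisional conservation identities, so that no intermediate estimate involves a weight stronger than the $\langle v\rangle^{l}$ on $f$, $\langle v\rangle$ on $g$, $\langle v\rangle$ on $h$ allowed by the statement.
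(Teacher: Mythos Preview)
Your treatment of $A_1$ is fine and mirrors the paper's argument in the region $|v_1|\le |v|/4$. The gap is in $A_2$. After absorbing $|v-v_1|\le 2\langle v^*\rangle\langle v_1^*\rangle$ into $g$ and $h$ you are left with the weight $\langle v_1^*\rangle^{l}$ sitting on $f(v_1)$, and you propose to move it to $v_1$ via Peetre: $\langle v_1^*\rangle^{l}\lesssim \langle v_1^*-v_1\rangle^{l}\langle v_1\rangle^{l}\le \langle v-v_1\rangle^{l}\langle v_1\rangle^{l}$. The factor $\langle v_1\rangle^{l}$ indeed combines with $f$, but the factor $\langle v-v_1\rangle^{l}$ is \emph{not} bounded and survives inside the $(\sigma,v_1)$-integral. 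Lemma~\ref{pre average lemma} only controls $\int |\psi(v_1^*)|\,b\,d\sigma\,dv_1$; after the change of variable $v_1\mapsto v_1^*$ the relative-velocity weight becomes $\langle |v-v_1^*|/(\widehat{v-v_1^*}\cdot\sigma)\rangle^{l}$, which is still $\sim\langle v-v_1^*\rangle^{l}$ on the support of $b$ and is unbounded in the new integration variable. Concretely, for $v=(R,0,0)$, $v_1=0$ and $\sigma$ close to $(0,1,0)$ with $\sigma\cdot\hat u>0$, one has $|v_1^*|\sim R$ while $|v_1|=0$, so the Peetre residue $\langle v-v_1\rangle^{l}\sim R^{l}$ and there is nothing left to absorb it.

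The paper avoids this by a different decomposition: instead of the energy-conservation split $\langle v\rangle^{l}\lesssim\langle v^*\rangle^{l}+\langle v_1^*\rangle^{l}$, it splits the $v_1$-integration into $\{|v_1|>|v|/4\}$ and $\{|v_1|\le|v|/4\}$. On $\{|v_1|>|v|/4\}$ one has $\langle v\rangle^{l}\lesssim\langle v_1\rangle^{l}$ directly, and the weight lands on $f(v_1)$ with no relative-velocity penalty; this yields the second term. On $\{|v_1|\le|v|/4\}$ the crucial geometric fact (which uses the cutoff $b$) is that $|v^*|\ge\tfrac{\sqrt{3}}{4}|v|$, hence $\langle v\rangle^{l}\lesssim\langle v^*\rangle^{l}$, and one proceeds exactly as in your $A_1$ to get the first term. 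The point is that the pre/post-collisional splitting you use does not give enough control on the pre-collisional $v_1$; one must instead split in the pre-collisional variables and use the cutoff $b$ to force $|v^*|\gtrsim|v|$ in the complementary region.
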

\begin{proof}
We split the estimate into two different cases based on the magnitude of $|v|$ and $|v_1|$. 

Fix $\delta>0$, 
and assume $|v_1|\leq\delta|v|$ and $(v-v_1)\cdot\sigma>0$. 
Writing $V=v+v_1$, $u=v-v_1$, triangle inequality implies 
$|u| \geq (1-\delta)|v|$ and $|V|\geq (1-\delta)|v|$. Moreover $V\cdot\sigma=(u+2v_1)\cdot\sigma>2v_1\cdot\sigma\geq -2|v_1|\geq -2\delta|v|$. Now
\begin{align*}
4|v^*|^2&=\Big|V+|u|\sigma\Big|^2=|V|^2+|u|^2+2|u|V\cdot\sigma\\
&\geq 2(1-\delta)^2|v|^2-2\delta(1-\delta)|v|^2\\
&= 2(1-\delta)(1-2\delta)|v|^2
\end{align*}
Choosing $\delta=1/4$ we obtain $|v^*|\geq \frac{\sqrt{3}}{4}|v|$. 
Hence, we have
\begin{equation}\label{comparing characteristics}
\mathds{1}_{|v_1|\leq |v|/4}b\left(\frac{v-v_1}{|v-v_1|}\cdot\sigma\right)\leq\mathds{1}_{|v^*|\geq \frac{\sqrt{3}}{4}|v|}b \bigg( \frac{v-v_1}{\vert v-v_1 \vert} \cdot \sigma \bigg).    
\end{equation}
 
We will estimate separately for $|v_1|>|v|/4$ and for $|v_1|\leq |v|/4$. First, denoting $\widetilde{g}(v)=\l v\r g(v)$, $\widetilde{h}(v)=\l v\r h(v)$, we have
\begin{align}
\l v\r^l\int_{\mathbb{R}^3 \times \S^2}& \mathds{1}_{|v_1|>|v|/4} \vert v-v_1 \vert \vert f(v_1) \vert \vert  g(v^{*}) \vert \vert h(v_1^{*}) \vert b\big(\frac{v-v_1}{\vert v-v_1 \vert} \cdot \sigma \big) dv_1 d\sigma\nonumber  \\
&\lesssim \int_{\mathbb{R}^3 \times \S^2} \vert v-v_1 \vert \vert \l v_1\r^l f(v_1) \vert \vert  g(v^{*}) \vert \vert h(v_1^{*}) \vert b\big(\frac{v-v_1}{\vert v-v_1 \vert} \cdot \sigma \big) dv_1 d\sigma\nonumber\\
&\leq \|\l v\r^l f\|_{L^\infty_v}\int_{\R^3\times\S^2}\frac{|v-v_1|}{\l v^*\r\l v_1^*\r}| \widetilde{g}(v^*)||\widetilde{h}(v_1^*)|b\big(\frac{v-v_1}{\vert v-v_1 \vert} \cdot \sigma \big)\,dv_1\,d\sigma\nonumber\\
&\leq \|\l v\r^l f\|_{L^\infty_v}\|\widetilde{g}\|_{L^\infty}\|\widetilde{h}\|_{L^1_v}\label{truncation estimate 1},
\end{align}
where the last line is obtained in the same way as estimate \eqref{G_2 weighted estimate}.

Now using \eqref{comparing characteristics}, we also have
\begin{align}
\l v\r^l\int_{\mathbb{R}^3 \times \S^2}& \mathds{1}_{|v_1|\leq |v|/4} \vert v-v_1 \vert \vert f(v_1) \vert \vert  g(v^{*}) \vert \vert h(v_1^{*}) \vert b\big(\frac{v-v_1}{\vert v-v_1 \vert} \cdot \sigma \big) dv_1 d\sigma\nonumber\\
 &\leq \l v\r^l\int_{\mathbb{R}^3 \times \S^2} \mathds{1}_{|v^*|\geq\frac{\sqrt{3}}{4}|v|} \vert v-v_1 \vert \vert f(v_1) \vert \vert  g(v^{*}) \vert \vert h(v_1^{*}) \vert  dv_1 d\sigma\nonumber\\
 &\lesssim \int_{\mathbb{R}^3 \times \S^2} \vert v-v_1 \vert \vert  f(v_1) \vert \vert  \l v^*\r^l g(v^{*}) \vert \vert \l h(v_1^{*}) \vert  dv_1 d\sigma\nonumber\\
 &\leq \|\l v\r^l g\|_{L^\infty_v} \int_{\R^3\times\S^2}\frac{|v-v_1|}{\l v_1^*\r^3}|f(v_1)|\l v_1^*\r^3|h(v_1^*)|\,d\sigma\,dv_1\nonumber\\
 &\leq  \|\l v\r^l g\|_{L^\infty_v}\|\l v\r^3 h\|_{L^\infty_v}\int_{\R^3}\vert f(v_1) \vert \int_{\S^2}\frac{|v-v_1|}{\l v_1^*\r^3}\,d\sigma\,dv_1\nonumber\\
 &\lesssim
 \|f\|_{L^1_v}\|\l v\r^l g\|_{L^\infty_v}\|\l v\r^3 h\|_{L^\infty_v},\label{truncation estimate 2}
\end{align}
where to obtain the last line we used Lemma \ref{singint}.

Combining \eqref{truncation estimate 1}-\eqref{truncation estimate 2}, the result follows.

\end{proof}

\begin{lemma}\label{lemma post using pre}
For $l\geq 0$, there hold the estimates
\begin{align} \label{G1 L1 estimate}
\Vert \langle v \rangle^l \mathcal{G}_1[f,g,h] \Vert_{L^{1}_v} \lesssim \Vert \mathcal{L}_2[\langle v \rangle^l \vert h \vert, \vert g \vert,\vert f \vert] \Vert_{L^1_{v}} + \Vert \mathcal{L}_2[\vert h \vert,\langle v \rangle^l \vert g \vert , \vert f \vert] \Vert_{L^1_{v}}
\end{align}
and
\begin{align} \label{G2 L1 estimate}
    \Vert \langle v \rangle^l \mathcal{G}_2[f,g,h] \Vert_{L^{1}_v} \lesssim \Vert \mathcal{L}_1[\vert h \vert,\vert g \vert,\langle v \rangle^l \vert f \vert] \Vert_{L^1_v}.
\end{align}
Moreover
\begin{align}\label{G1 estimate joint}
    \Vert \l v\r^l\mathcal{G}_1[f,g,h] \Vert_{L^1_{v}} \lesssim  \Vert f\Vert_{L^1_v}\Vert \l v\r^l g \Vert_{L^1_v}\Vert \langle v \rangle^3 h \Vert_{L^{\infty}_v} + \Vert \langle v \rangle^3 f \Vert_{L^\infty_v} \Vert \langle v \rangle g \Vert_{L^1_v}\Vert \langle v \rangle^{l} h \Vert_{L^{1}_v},
\end{align}


\end{lemma}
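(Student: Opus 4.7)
The plan is to prove the three estimates in succession; all three rely on the same relabeling trick applied to the multilinear $\delta$-function representation of the gain operators. Start with \eqref{G1 L1 estimate}. Writing
\begin{equation*}
\|\langle v\rangle^l \mathcal{G}_1[f,g,h]\|_{L^1_v} \leq 2\int \delta(\Sigma)\delta(\Omega)\, \langle v\rangle^l |f(v_1)g(v_2)h(v_3)| \, b((v-v_1)\cdot(v_2-v_3))\, dv\, dv_1\, dv_2\, dv_3,
\end{equation*}
the key observation is that the order-reversal $(v,v_1,v_2,v_3)\mapsto (v_3,v_2,v_1,v)$ preserves the measure $\delta(\Sigma)\delta(\Omega)\,b$: the sign flips of $\Sigma$ and $\Omega$ are absorbed by the deltas, and $(v-v_1)\cdot(v_2-v_3)$ is invariant because both factors flip sign simultaneously. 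Under this substitution the weight migrates to $\langle v_3\rangle^l$ while $h,g,f$ land at positions $v,v_1,v_2$ that match the three slots of $\mathcal{L}_2$. Because $\langle v_3\rangle^l$ sits on a ``phantom'' slot (no input function is evaluated there), I dispose of it using energy conservation: on the resonance manifold $\Omega=0$ one has $|v_3|^2\le |v|^2+|v_1|^2$, hence $\langle v_3\rangle^l\lesssim \langle v\rangle^l+\langle v_1\rangle^l$ for all $l\ge 0$. Splitting accordingly produces exactly the two $\mathcal{L}_2$ terms in \eqref{G1 L1 estimate}.

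For \eqref{G2 L1 estimate} the same order-reversal applies, but now the distinguished argument $f(v)$ of $\mathcal{G}_2$ lands at position $v_3$, carrying the weight $\langle v_3\rangle^l$ with it. Since $\mathcal{L}_1[A,B,C]$ has $C$ at position $v_3$, the resulting expression is literally $\int \mathcal{L}_1[|h|,|g|,\langle v\rangle^l|f|]\,dv$; no splitting via energy conservation is needed, and \eqref{G2 L1 estimate} follows immediately.

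Finally, for \eqref{G1 estimate joint} I combine \eqref{G1 L1 estimate} with the loss estimates already established in this section. Applied with $p=1$ and vanishing outer weight, \eqref{trilossmain} handles each of the two $\mathcal{L}_2$ terms on the right-hand side of \eqref{G1 L1 estimate} by placing the third argument $|f|$ in $L^\infty_v$ (with weight three) and the first two in $L^1_v$; this yields the second summand of \eqref{G1 estimate joint} after absorbing trivial bracket weights by monotonicity of $\Vert\langle v\rangle^a\phi\Vert_{L^p}$ in $a\ge 0$. Complementarily, \eqref{triloss2} places the first argument of $\mathcal{L}_2$ in $L^\infty_v$ and the third in $L^1_v$; applied to the $\mathcal{L}_2$ term carrying $\langle v\rangle^l|h|$ in its first slot, it yields the first summand of \eqref{G1 estimate joint}. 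The main obstacle is the careful pairing of each $\mathcal{L}_2$ term from \eqref{G1 L1 estimate} with the loss estimate that lands its weights exactly where the statement prescribes; this is possible precisely because the preceding subsection supplies three different loss estimates (\eqref{trilossmain}, \eqref{triloss1}, \eqref{triloss2}) that distribute the weights across the three arguments in different ways.
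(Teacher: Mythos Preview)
Your arguments for \eqref{G1 L1 estimate} and \eqref{G2 L1 estimate} are correct and are essentially the paper's proof executed in a different chart: the paper works in the parametrized $\sigma$-representation and applies the involution $(v,v_1)\mapsto(v_1^*,v^*)$, whereas you work in the $\delta$-representation and use the order-reversing relabeling $(v,v_1,v_2,v_3)\mapsto(v_3,v_2,v_1,v)$. Both send $\mathcal{G}_1$ to $\mathcal{L}_2$ (and $\mathcal{G}_2$ to $\mathcal{L}_1$) in $L^1_v$, and both then distribute the weight via energy conservation for the first estimate.

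Your treatment of \eqref{G1 estimate joint}, however, does not close. Applying \eqref{trilossmain} to the term $\Vert\mathcal{L}_2[|h|,\langle v\rangle^l|g|,|f|]\Vert_{L^1_v}$ gives $\Vert h\Vert_{L^1_v}\Vert\langle v\rangle^l g\Vert_{L^1_v}\Vert\langle v\rangle^3 f\Vert_{L^\infty_v}$, which is \emph{not} dominated by the second summand $\Vert\langle v\rangle^3 f\Vert_{L^\infty_v}\Vert\langle v\rangle g\Vert_{L^1_v}\Vert\langle v\rangle^l h\Vert_{L^1_v}$ of the statement (the $l$-weight sits on $g$ rather than $h$, and monotonicity in the bracket power does not let you swap that). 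Your complementary use of \eqref{triloss2} on $\Vert\mathcal{L}_2[\langle v\rangle^l|h|,|g|,|f|]\Vert_{L^1_v}$ produces a factor $\Vert\langle v\rangle^{l+1} h\Vert_{L^\infty_v}$, which is controlled by $\Vert\langle v\rangle^3 h\Vert_{L^\infty_v}$ only when $l\le 2$, so this does not cover general $l\ge 0$ either. In fact, applying \eqref{trilossmain} to \emph{both} $\mathcal{L}_2$ terms --- exactly what the paper does --- yields
\[
\Vert\langle v\rangle^l h\Vert_{L^1_v}\Vert g\Vert_{L^1_v}\Vert\langle v\rangle^3 f\Vert_{L^\infty_v}+\Vert h\Vert_{L^1_v}\Vert\langle v\rangle^l g\Vert_{L^1_v}\Vert\langle v\rangle^3 f\Vert_{L^\infty_v},
\]
i.e.\ the right-hand side of \eqref{G1 estimate joint} with the roles of $f$ and $h$ interchanged. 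This is what the paper's own proof actually establishes; the discrepancy is with the stated form, not with your method.
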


\begin{proof}   We first treat the case $l=0$. We perform the involutionary change of variables $(v,v_1)\to (v_1^*,v^*)$, to obtain
\begin{align*}
\|\mathcal{G}_1[\vert f \vert, \vert g \vert, \vert h \vert]\|_{L^1_v}&=\int_{\R^6\times\S^2}|v-v_1| \vert f(v_1)\vert \vert g(v^*) \vert \vert h(v_1^*) \vert b \left(\frac{v-v_1}{|v-v_1|}\cdot\sigma\right)\,d\sigma\,dv_1\,dv\\
&=\int_{\R^6\times\S^2}|v_1^*-v^*| \vert f(v^*) \vert \vert g(v_1) \vert h(v) \vert b\left(\frac{v_1^*-v^*}{|v_1^*-v^*|}\cdot\sigma\right)\,d\sigma\,dv_1\,dv
\end{align*}
Now, using \eqref{conservation of relative velocities}-\eqref{microreversibilty},
we obtain
\begin{align*}
\|\mathcal{G}_1[\vert f \vert , \vert g \vert, \vert h \vert]\|_{L^1_v}&=\int_{\R^6\times\S^2}|v-v_1| \vert h(v) \vert \vert g(v_1) \vert \vert f(v^*) \vert b\left(\frac{v-v_1}{|v-v_1|}\cdot\sigma\right)\,d\sigma\,dv_1\,dv\\
&=\|\mathcal{L}_2[\vert h \vert, \vert g \vert , \vert f \vert]\|_{L^1_v}.
\end{align*}
Similarly, we obtain $\|\mathcal{G}_2[\vert f \vert , \vert g \vert , \vert h \vert]\|_{L^1_v}=\|\mathcal{L}_1[\vert h \vert,\vert g \vert, \vert f \vert]\|_{L^1_v}$. 

Estimates \eqref{G1 L1 estimate}-\eqref{G2 L1 estimate} are proved and estimates \eqref{G1 estimate joint}  follows from  \ref{trilossmain}.

Now for  $l>0$, conservation of energy \eqref{conservation of energy} implies
$$\l v\r^l\lesssim \l v^*\r^l +\l v_1^*\r^l.$$
Applying the triangle inequality  as well, we obtain 
$$\|\l v\r^l \mathcal{G}_1[f,g,h]\|_{L^1_{v}}\leq \|\mathcal{G}_1[\vert f \vert ,\l v\r^l \vert g \vert, \vert h \vert]\|_{L^1_v}+\|\mathcal{G}_1[\vert f \vert, \vert g \vert,\l v\r^l \vert h \vert]\|_{L^1_v},$$
and estimates \eqref{G1 estimate joint}, \eqref{G1 L1 estimate} follow from the case $l=0$.
Estimate  \eqref{G2 L1 estimate} follows from the fact that $\l v\r^l\mathcal{G}_2[f,g,h]=\mathcal{G}_2[\l v\r^l f,g,h]$ and the case $l=0$.
\end{proof}

\begin{lemma}
We have for $l \geq 0$
\begin{align} \label{G_1 L2 estimate}
    \Vert \langle v \rangle^l \mathcal{G}_1[f,g,h] \Vert_{L^{2}_v} \lesssim \Vert f \Vert_{L^{1}_v} \Vert \langle v \rangle^l g \Vert_{L^2_v} \Vert \langle v \rangle^{3/2} h \Vert_{L^{\infty}_v} + \Vert \langle v \rangle^l f \Vert_{L^{1}_v} \Vert g \Vert_{L^2_v} \Vert \langle v \rangle^{3/2} h \Vert_{L^{\infty}_v}.
\end{align}

\end{lemma}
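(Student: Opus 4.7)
The plan is to first establish the $l=0$ case directly and then reduce the weighted case to it by the same splitting argument used in the proof of \eqref{G1 weighted estimate}. For $l=0$, the approach is to pull $h$ out in $L^\infty$ with a $\langle v_1^*\rangle^{3/2}$ weight, and then apply Cauchy--Schwarz in the $(v_1,\sigma)$ variables with a carefully chosen split. Concretely, writing
\begin{align*}
 \mathcal{G}_1[f,g,h](v) &\lesssim \|\langle v\rangle^{3/2} h\|_{L^\infty}\int \frac{|v-v_1|}{\langle v_1^*\rangle^{3/2}} f(v_1)^{1/2}\cdot f(v_1)^{1/2} g(v^*)\, b\,d\sigma\, dv_1,
\end{align*}
Cauchy--Schwarz yields
\begin{align*}
 \mathcal{G}_1[f,g,h](v)^2 &\lesssim \|\langle v\rangle^{3/2} h\|_{L^\infty}^2 \left(\int \frac{|v-v_1|^2}{\langle v_1^*\rangle^3} f(v_1)\,b\,d\sigma\,dv_1\right)\!\left(\int f(v_1)\,g(v^*)^2\,b\,d\sigma\,dv_1\right).
\end{align*}
By Lemma~\ref{singint} applied with $\gamma=2$ to $F_1$, the first factor is bounded by a constant multiple of $\|f\|_{L^1}$ uniformly in $v$. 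Integrating the resulting pointwise bound over $v$ and handling the second factor via Lemma~\ref{pre average lemma}, estimate \eqref{estimate for v pre} (with $\psi=g^2$, and $v_1,\sigma$ frozen for the $v$-integration), one arrives at
\begin{align*}
 \|\mathcal{G}_1[f,g,h]\|_{L^2_v}^2 &\lesssim \|\langle v\rangle^{3/2} h\|_{L^\infty}^2\|f\|_{L^1}\int f(v_1)\int g(v^*)^2\,b\,d\sigma\,dv\,dv_1 \lesssim \|\langle v\rangle^{3/2} h\|_{L^\infty}^2\|f\|_{L^1}^2\|g\|_{L^2_v}^2,
\end{align*}
which is the $l=0$ estimate.

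To promote this to arbitrary $l\ge 0$, I would split the $v_1$-integration into $\{|v_1|>|v|/4\}$ and $\{|v_1|\le |v|/4\}$. On the first region, triangle inequality gives $\langle v\rangle^l\lesssim \langle v_1\rangle^l$, so the contribution is controlled by $\mathcal{G}_1[\langle v\rangle^l f,g,h](v)$, and the $l=0$ case yields the term $\|\langle v\rangle^l f\|_{L^1}\|g\|_{L^2}\|\langle v\rangle^{3/2} h\|_{L^\infty}$. On the second region, the geometric computation in \eqref{comparing characteristics} gives $|v^*|\ge \tfrac{\sqrt 3}{4}|v|$ on the support of $b$, hence $\langle v\rangle^l\lesssim \langle v^*\rangle^l$, so the contribution is controlled by $\mathcal{G}_1[f,\langle v\rangle^l g,h](v)$, giving the term $\|f\|_{L^1}\|\langle v\rangle^l g\|_{L^2}\|\langle v\rangle^{3/2} h\|_{L^\infty}$. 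Summing the two pieces produces the claimed inequality.

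The main obstacle is choosing the right Cauchy--Schwarz grouping in the $l=0$ case. The key point is to pair $|v-v_1|/\langle v_1^*\rangle^{3/2}$ with $f(v_1)^{1/2}$ so that squaring produces $|v-v_1|^2/\langle v_1^*\rangle^3$, which is exactly what Lemma~\ref{singint} handles at its limit exponent $\gamma=2$; this is also what makes the weight $\langle v\rangle^{3/2}$ on $h$ sufficient (as opposed to the $\langle v\rangle^3$ required in the $L^\infty$ estimate \eqref{G1 weighted estimate}), since Cauchy--Schwarz effectively halves the $h$-weight needed. Compared to the loss operators, the asymmetry between the two regions in $v_1$ is unavoidable here, because $g(v^*)$ carries a post-collisional velocity and cannot be estimated directly in $L^2_{v_1}$ without using the collisional change of variables.
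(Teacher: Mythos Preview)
Your proof is correct and uses the same two key ingredients as the paper: Lemma~\ref{singint} at the endpoint $\gamma=2$ to control the kernel $|v-v_1|^2/\langle v_1^*\rangle^3$, and \eqref{estimate for v pre} with $\psi=g^2$ to recover $\|g\|_{L^2}$; the reduction from $l>0$ to $l=0$ via the splitting $\{|v_1|>|v|/4\}\cup\{|v_1|\le|v|/4\}$ and \eqref{comparing characteristics} is identical to the paper's.

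The only organizational difference is in the $l=0$ step. The paper argues by duality: it pairs with $\phi\in L^2$, first pulls out $\|f\|_{L^1}$ by taking $\sup_{v_1}$, then applies Cauchy--Schwarz in $v$ to separate $\phi$, and finally Cauchy--Schwarz in $\sigma$ to split $g(v^*)$ from $|v-v_1|\,h(v_1^*)$. You work directly, pulling out $h$ in $L^\infty$ and applying Cauchy--Schwarz in $(v_1,\sigma)$ with the $f^{1/2}\cdot f^{1/2}$ split, then integrating in $v$. Your route is slightly more elementary in that it avoids introducing a test function; the paper's route avoids splitting $f$ and instead takes a supremum in $v_1$ before any Cauchy--Schwarz. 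Both arrive at exactly the same bound with the same inputs, so the distinction is purely presentational.
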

\begin{proof}
Assume first $l=0.$ We argue by duality. Let $\phi \in L^2.$ We have
\begin{align*}
   I:= & \int_{\mathbb{R}^3} \vert \phi(v) \vert \int_{\mathbb{R}^3 \times \S^2} \vert f(v_1) \vert \vert g(v^{*}) \vert \vert h(v_1^{*}) \vert \vert v-v_1 \vert b\big( \frac{v-v_1}{\vert v-v_1 \vert} \cdot \sigma \big) d\sigma dv_1 dv \\
 \lesssim & \Vert f \Vert_{L^1_v} \Bigg \Vert \int_{\mathbb{R}^3} \vert \phi(v) \vert \int_{\S^2} \vert g(v^{*}) \vert \vert h(v_1^{*}) \vert \vert v-v_1 \vert b \big( \frac{v-v_1}{\vert v-v_1 \vert} \cdot \sigma \big) d\sigma  dv \Bigg \Vert_{L^{\infty}_{v_1}} \\
 \lesssim & \Vert f \Vert_{L^1_v} \Vert \phi \Vert_{L^2_v} \Bigg \Vert \int_{\S^2} \vert g(v^*) \vert \vert h(v_1^{*}) \vert  \vert v-v_1 \vert b \big( \frac{v-v_1}{\vert v-v_1 \vert} \cdot \sigma \big) d\sigma \Bigg \Vert_{L^{\infty}_{v_1} L^2_v}  ,
\end{align*}
where for the last line we used the Cauchy-Schwarz inequality in $v.$ 

Next we use the Cauchy-Schwarz inequality in $\sigma$ to write
\begin{align*}
   &  \int_{\S^2} \vert g(v^{*}) \vert \vert h(v_1^{*}) \vert \vert v-v_1 \vert b \big( \frac{v-v_1}{\vert v-v_1 \vert} \cdot \sigma \big) d\sigma \\  \leq & \Vert \langle v \rangle^{3/2} h \Vert_{L^{\infty}_v} \bigg( \int_{\S^2} g^2 (v^{*}) b \bigg( \frac{v-v_1}{\vert v-v_1 \vert} \cdot \sigma \bigg) d\sigma \bigg)^{1/2} \bigg( \int_{\S^2} \frac{\vert v-v_1 \vert^2}{\langle v_1^{*} \rangle^3}  b \bigg( \frac{v-v_1}{\vert v-v_1 \vert} \cdot \sigma \bigg) d\sigma \bigg)^{1/2}.
\end{align*}
Squaring this estimate, integrating in $v$ and using \eqref{estimate for v pre} applied to $\psi = g^2$ we find 
\begin{align*}
I & \lesssim \Vert f \Vert_{L^1_v} \Vert \phi \Vert_{L^2_v} \Vert \langle v \rangle^{3/2} h \Vert_{L^{\infty}_v} \Vert g \Vert_{L^2_v} \Bigg \Vert \int_{\S^2} \frac{\vert v-v_1 \vert^2}{\langle v_1 ^{*} \rangle^3} b \bigg( \frac{v-v_1}{\vert v-v_1 \vert} \cdot \sigma \bigg)  d\sigma \Bigg \Vert_{L^{\infty}_{v,v_1}},
\end{align*}
which yields the desired result with Lemma \ref{singint} applied to $\gamma=2.$ 

Now for  $l>0$, inequality \eqref{comparing characteristics} implies

$$ b \bigg( \frac{v-v_1}{\vert v-v_1 \vert} \cdot \sigma \bigg)\leq\left(\mathds{1}_{|v_1|>|v|/4}+\mathds{1}_{|v^*|\geq\frac{\sqrt{3}}{4}|v|}\right)b \bigg( \frac{v-v_1}{\vert v-v_1 \vert} \cdot \sigma \bigg) .$$
Using the triangle inequality as well, we obtain
$$\|\l v\r^l\mathcal{G}_1[f,g,h]\|_{L^2_v}\lesssim \|\mathcal{G}_1[\l v\r^l \vert f \vert ,\vert g \vert , \vert h \vert ] \Vert_{L^2_v}+\|\mathcal{G}_1[\vert f \vert,\l v\r^l \vert g \vert, \vert h \vert]\|_{L^2_v},$$
and the result follows from the case $l=0$.
\end{proof}

\subsection{Summary}
In this section we record straightforward corollaries of the estimates obtained above, in a form better adapted to the analysis of \eqref{KWE} in the remainder of the paper. 

\begin{corollary}
Let $\mathcal{T} \in \lbrace \mathcal{L}_1, \mathcal{L}_2, \mathcal{G}_1, \mathcal{G}_2 \rbrace$ and $l\ge 0$. The following estimates hold

\begin{align} \label{joint weighted Linfty}
    \Vert \langle v \rangle^l \mathcal{T}[f,f,f] \Vert_{L^{\infty}_v} \lesssim \Vert \langle v \rangle^3 f \Vert_{L^{\infty}_v} \Vert \langle v \rangle^l f \Vert_{L^{\infty}_v} \Vert \langle v \rangle f \Vert_{L^1_v}, 
\end{align}
\begin{align} \label{joint L2 1w}
     \Vert \langle v \rangle^l \mathcal{T}[f,f,f] \Vert_{L^{2}_v} \lesssim \left(\Vert \langle v \rangle^l f \Vert_{L^2_v} \Vert \langle v \rangle f \Vert_{L^1_v} +\|\l v\r^l f\|_{L^1_v}\| f\|_{L^2_v}\right)\|\l v\r^3 f\|_{L^\infty_v},
\end{align}
\begin{align} \label{joint L1 lw}
    \Vert \langle v \rangle^l \mathcal{T}[f,f,f] \Vert_{L^{1}_v} \lesssim \Vert \langle v \rangle^l f \Vert_{L^1_v} \Vert \langle v \rangle f \Vert_{L^1_v} \Vert \langle v \rangle^{3} f \Vert_{L^{\infty}_v}.
\end{align}
Finally, there holds the non-weighted estimate
\begin{align} \label{joint L1}
    \Vert \mathcal{T}[f,f,f] \Vert_{L^{1}_v} \lesssim \Vert f \Vert_{L^1_v} \Vert \langle v \rangle f \Vert_{L^1_v} \Vert \langle v \rangle f \Vert_{L^{\infty}_v}.
\end{align}

\end{corollary}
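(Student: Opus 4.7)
The corollary is essentially a bookkeeping exercise: each of the four estimates is obtained by specializing one of the trilinear bounds already proved for $\mathcal{L}_1, \mathcal{L}_2, \mathcal{G}_1, \mathcal{G}_2$ to $f = g = h$ and, where needed, dominating lower-order weighted norms by higher-order ones via $\|\langle v \rangle^a f\|_{L^p_v} \leq \|\langle v \rangle^b f\|_{L^p_v}$ for $0 \leq a \leq b$. My plan is to dispatch the four estimates one at a time, listing the four operators as sub-cases in each.

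For the $L^\infty_v$ estimate \eqref{joint weighted Linfty}, I would apply \eqref{trilossmain} with $p = \infty$ for $\mathcal{T} \in \{\mathcal{L}_1, \mathcal{L}_2\}$, which directly yields the desired bound since $\|f\|_{L^1_v} \leq \|\langle v \rangle f\|_{L^1_v}$. For $\mathcal{G}_2$, \eqref{G_2 weighted estimate} with $p = \infty$ gives the bound, using $\|\langle v \rangle f\|_{L^\infty_v} \leq \|\langle v \rangle^3 f\|_{L^\infty_v}$ to absorb the middle factor. For $\mathcal{G}_1$, both terms on the right-hand side of \eqref{G1 weighted estimate} are controlled by the product $\|\langle v \rangle^3 f\|_{L^\infty_v}\|\langle v \rangle^l f\|_{L^\infty_v}\|\langle v \rangle f\|_{L^1_v}$ after relaxing weights as above.

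The $L^2_v$ estimate \eqref{joint L2 1w} follows from \eqref{trilossmain} with $p = 2$ for the loss operators (producing the first term on the right-hand side) and from \eqref{G_2 weighted estimate} with $p = 2$ for $\mathcal{G}_2$ (again producing the first term after relaxing a weight from $\langle v \rangle^3$ down to $\langle v \rangle$). For $\mathcal{G}_1$ the estimate \eqref{G_1 L2 estimate} is exactly the desired bound. The weighted $L^1_v$ estimate \eqref{joint L1 lw} is obtained similarly: \eqref{trilossmain} with $p = 1$ handles $\mathcal{L}_1, \mathcal{L}_2$; \eqref{G1 estimate joint} handles $\mathcal{G}_1$ directly; and for $\mathcal{G}_2$ one can either apply \eqref{G_2 weighted estimate} with $p = 1$, or combine \eqref{G2 L1 estimate} with \eqref{trilossmain} at $p = 1$.

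Finally, for the unweighted $L^1_v$ estimate \eqref{joint L1}, I would use \eqref{triloss1} for $\mathcal{L}_1$ and \eqref{triloss2} for $\mathcal{L}_2$ directly, and then for the gain operators invoke the $L^1$-symmetry identities \eqref{G1 L1 estimate}, \eqref{G2 L1 estimate} (with $l = 0$) to convert to loss estimates and reapply \eqref{triloss1}, \eqref{triloss2}. There is no real obstacle in any of these steps; the only care needed is in verifying for the $\mathcal{G}_1$ case of \eqref{joint weighted Linfty} that both terms of \eqref{G1 weighted estimate} indeed fit the desired trilinear product after the elementary weight relaxations, since that is the one place where a choice of which factor receives which norm matters.
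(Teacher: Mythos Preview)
Your proposal is correct and follows essentially the same route as the paper's proof, which simply cites the relevant trilinear lemmas (\eqref{trilossmain}, \eqref{G_2 weighted estimate}, \eqref{G1 weighted estimate} for \eqref{joint weighted Linfty}; \eqref{trilossmain}, \eqref{G_2 weighted estimate}, \eqref{G_1 L2 estimate} for \eqref{joint L2 1w}; \eqref{G1 L1 estimate}--\eqref{G2 L1 estimate} and \eqref{trilossmain} for \eqref{joint L1 lw}; \eqref{G1 L1 estimate}--\eqref{G2 L1 estimate} and \eqref{triloss1}--\eqref{triloss2} for \eqref{joint L1}) and leaves the weight relaxations implicit. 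Your version is in fact slightly more explicit, and your suggestion to handle $\mathcal{G}_2$ in \eqref{joint L1 lw} directly via \eqref{G_2 weighted estimate} at $p=1$ is the cleaner option.
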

\begin{proof}
Estimate \eqref{joint weighted Linfty} follows from \eqref{trilossmain}, \eqref{G_2 weighted estimate} and \eqref{G1 weighted estimate}. Estimate \eqref{joint L2 1w} follows from \eqref{trilossmain}, \eqref{G_2 weighted estimate} and \eqref{G_1 L2 estimate}. Estimate \eqref{joint L1 lw}  follows from \eqref{G1 L1 estimate}-\eqref{G2 L1 estimate} and \eqref{trilossmain}.
Estimate \eqref{joint L1} is a consequence of \eqref{G1 L1 estimate}-\eqref{G2 L1 estimate} and \eqref{triloss1}-\eqref{triloss2}.
\end{proof}

Finally we give estimates that will be useful for contraction arguments in the next sections. We start with estimates that will be needed when solving the final state problem in the scattering section. 
\begin{lemma} \label{contractscatt}
The following estimates holds for $\mathcal{T} \in \lbrace \mathcal{L}_1, \mathcal{L}_2, \mathcal{G}_1, \mathcal{G}_2 \rbrace,$
\begin{equation}\label{contraction estimate L1}
\begin{aligned}
& \Vert \mathcal{T}[f,h_1,h_2] \Vert_{L^1_{v}} + \Vert  \mathcal{T}[h_1,f,h_2] \Vert_{L^1_{v}} + \Vert \mathcal{T}[h_1,h_2,f] \Vert_{L^1_{v}}  \\
\lesssim & \Vert f \Vert_{L^1_{v}} \big( \Vert \l v \r h_1 \Vert_{L^1_v} \Vert \l v \r^3 h_2 \Vert_{L^\infty_{v}} + \Vert \l v \r h_2 \Vert_{L^1_v} \Vert \l v \r^3 h_1 \Vert_{L^\infty_{v}} \big).
\end{aligned}
\end{equation}
Moreover for $l \geq 0$
\begin{equation} \label{contraction estimate Linfty}
    \begin{aligned}
        & \Vert \langle v \rangle^l \mathcal{T}[f,h_1,h_2] \Vert_{L^\infty_{v}} + \Vert \langle v \rangle^l \mathcal{T}[h_1,f,h_2] \Vert_{L^\infty_{v}} + \Vert \langle v \rangle^l \mathcal{T}[h_1,h_2,f] \Vert_{L^\infty_{v}}  \\
\lesssim & \Vert \langle v \rangle^l f \Vert_{L^\infty_{v}} \big( \Vert \l v \r h_1 \Vert_{L^1_v} \Vert \l v \r^3 h_2 \Vert_{L^\infty_{v}} + \Vert \l v \r h_2 \Vert_{L^1_v} \Vert \l v \r^3 h_1 \Vert_{L^\infty_{v}} \big) \\
+ &  \Vert \langle v \rangle f \Vert_{L^1_{v}} \big( \Vert \l v \r^l h_1 \Vert_{L^\infty_{v}} \Vert \l v \r^3 h_2 \Vert_{L^\infty_{v}} +\Vert \l v \r^3 h_1 \Vert_{L^\infty_{v}} \Vert \l v \r^l h_2 \Vert_{L^\infty_{v}} \big) \\
+ & \Vert \langle v \rangle^3 f \Vert_{L^\infty_{v}} \big( \Vert \l v \r h_1 \Vert_{L^1_v} \Vert \l v \r^l h_2 \Vert_{L^\infty_{v}} + \Vert \l v \r h_2 \Vert_{L^1_v} \Vert \l v \r^l h_1 \Vert_{L^\infty_{v}} \big) .
    \end{aligned}
\end{equation} 
Finally 
\begin{equation} \label{contraction estimate L2}
    \begin{aligned}
        & \Vert \langle v \rangle \mathcal{T}[f,h_1,h_2] \Vert_{L^2_{v}} + \Vert \langle v \rangle \mathcal{T}[h_1,f,h_2] \Vert_{L^2_{v}} + \Vert \langle v \rangle \mathcal{T}[h_1,h_2,f] \Vert_{L^2_{v}}  \\
\lesssim & \Vert \langle v \rangle f \Vert_{L^2_{v}} \big( \Vert \l v \r h_1 \Vert_{L^1_v} \Vert \l v \r^3 h_2 \Vert_{L^\infty_{v}} + \Vert \l v \r h_2 \Vert_{L^1_v} \Vert \l v \r^3 h_1 \Vert_{L^\infty_{v}} \big) \\
+ & \Vert \langle v \rangle f \Vert_{L^1_{v}} \big( \Vert \l v \r h_1 \Vert_{L^2_v} \Vert \l v \r^3 h_2 \Vert_{L^\infty_{v}} + \Vert \l v \r h_2 \Vert_{L^2_v} \Vert \l v \r^3 h_1 \Vert_{L^\infty_{v}} \big) \\
+ & \Vert \langle v \rangle^3 f \Vert_{L^\infty_{v}} \big( \Vert \l v \r h_1 \Vert_{L^1_v} \Vert \l v \r h_2 \Vert_{L^2_{v}} + \Vert \l v \r h_2 \Vert_{L^1_v} \Vert \l v \r h_1 \Vert_{L^2_{v}} \big).
    \end{aligned}
\end{equation} 
\end{lemma}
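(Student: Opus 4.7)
The plan is to reduce each of the three inequalities to the single-input trilinear bounds established in the preceding subsections, by writing each norm $\|\mathcal{T}[\cdot,\cdot,\cdot]\|$ as a direct application of a prior lemma with $f$ placed in one of the three argument slots of $\mathcal{T}$. Three such terms appear on the left-hand side, and the right-hand side of each inequality is either a single line or is split into three lines corresponding precisely to the three possible positions of $f$; the proof thus reduces to matching each placement of $f$ with the appropriate contribution.

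For \eqref{contraction estimate L1}, for the loss operators $\mathcal{L}_1, \mathcal{L}_2$ I would apply \eqref{trilossmain} with $l=0$, $p=1$ when $f$ sits in the first or second slot, and \eqref{triloss1}, \eqref{triloss2} when $f$ sits in the third slot, so that $\|f\|_{L^1_v}$ appears rather than $\|\l v\r^3 f\|_{L^\infty_v}$. For the gain operators $\mathcal{G}_1, \mathcal{G}_2$ I would first invoke the exchange identities \eqref{G1 L1 estimate} and \eqref{G2 L1 estimate} with $l=0$ to rewrite each norm of $\mathcal{G}_i$ as a norm of a loss operator with permuted inputs, and then apply the loss bounds above. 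Trivial monotonicity $\|h\|_{L^1_v}\leq \|\l v\r h\|_{L^1_v}$ and $\|\l v\r h\|_{L^\infty_v}\leq \|\l v\r^3 h\|_{L^\infty_v}$ then casts each contribution into the template of the right-hand side.

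For \eqref{contraction estimate Linfty}, the same logic applies with $p=\infty$ and weight $l\geq 0$, invoking \eqref{trilossmain}, \eqref{G_2 weighted estimate} and \eqref{G1 weighted estimate} for the respective operators. For \eqref{contraction estimate L2} we take $p=2$, $l=1$, and use \eqref{G_1 L2 estimate} in place of \eqref{G1 weighted estimate}. In both cases the three lines on the right track the three positions of $f$: the first line absorbs contributions where $f$ inherits the outer $L^p$ norm (from the first slot of \eqref{trilossmain}, \eqref{G_2 weighted estimate}, or the second summand of \eqref{G1 weighted estimate}); the second line absorbs $\|\l v\r f\|_{L^1_v}$ contributions (from the middle slot of \eqref{trilossmain} or from the first summand of \eqref{G1 weighted estimate}); and the third line absorbs $\|\l v\r^3 f\|_{L^\infty_v}$ contributions (from $f$ placed in the third slot, or from the second slot of $\mathcal{G}_2$ after estimating $\|\l v\r f\|_{L^\infty_v}\le\|\l v\r^3 f\|_{L^\infty_v}$).

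The whole argument is essentially a bookkeeping enumeration of twelve operator-position pairs, with weight monotonicity absorbing the minor mismatches that arise. There is no substantive analytic obstacle beyond the care needed to select, for each position of $f$, the sharpest available single-input bound so that $f$ lands in precisely the norm demanded by the right-hand side; in particular, for \eqref{contraction estimate L1} one must not use \eqref{triloss1}--\eqref{triloss2} when $f$ is in the first or second slot of $\mathcal{L}_i$, lest a spurious weight be introduced on $f$. Since the analytic heavy lifting has already been carried out in the preceding lemmas, the proof consists entirely of this matching exercise.
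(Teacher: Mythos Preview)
Your proposal is correct and follows essentially the same approach as the paper. For \eqref{contraction estimate L1} you reduce the gain operators to loss operators via \eqref{G1 L1 estimate}--\eqref{G2 L1 estimate} and then apply \eqref{trilossmain} when $f$ is in the first or second slot and \eqref{triloss1}--\eqref{triloss2} when $f$ is in the third slot; for \eqref{contraction estimate Linfty} and \eqref{contraction estimate L2} you invoke \eqref{trilossmain}, \eqref{G_2 weighted estimate}, and \eqref{G1 weighted estimate} (respectively \eqref{G_1 L2 estimate}) exactly as the paper does, and your tracking of which line of the right-hand side absorbs which contribution is accurate.
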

\begin{proof}
    For estimate \eqref{contraction estimate L1},  first note that given \eqref{G1 L1 estimate}, \eqref{G2 L1 estimate} we have 
\begin{align*}
    & \sum_{\mathcal{T} \in \lbrace \mathcal{L}_1, \mathcal{L}_2, \mathcal{G}_1,\mathcal{G}_2 \rbrace} \Vert \mathcal{T}[f,h_1,h_2] \Vert_{L^1_{xv}} + \Vert  \mathcal{T}[h_1,f,h_2] \Vert_{L^1_{xv}} + \Vert \mathcal{T}[h_1,h_2,f] \Vert_{L^1_{xv}} \\
    \lesssim & \sum_{\mathcal{T} \in \lbrace \mathcal{L}_1, \mathcal{L}_2 \rbrace} \Vert \mathcal{T}[f,h_1,h_2] \Vert_{L^1_{xv}} + \Vert  \mathcal{T}[h_1,f,h_2] \Vert_{L^1_{xv}} + \Vert \mathcal{T}[h_1,h_2,f] \Vert_{L^1_{xv}}. 
\end{align*}
This reduces the problem to proving the desired estimate for the loss operators only. 

In the case of $\mathcal{L}_1, \mathcal{L}_2$ we use \eqref{trilossmain} if $f$ is in the first or second position. Otherwise we use \eqref{triloss1}, \eqref{triloss2}.

The second estimate \eqref{contraction estimate Linfty} follows from \eqref{trilossmain} for the loss terms, \eqref{G1 weighted estimate} for the first gain term and \eqref{G_2 weighted estimate} for the second.

The last estimate follows from \eqref{trilossmain} for the loss terms, \eqref{G_1 L2 estimate} for the first gain term and \eqref{G_2 weighted estimate} for the second. 
\end{proof}
Finally we record a contraction estimate that will be useful for the local well-posedness theorem proved below for \eqref{KWE}.
\begin{lemma} \label{contractLWP}
The following estimate holds for $\mathcal{T} \in \lbrace \mathcal{L}_1, \mathcal{L}_2, \mathcal{G}_1, \mathcal{G}_2 \rbrace,$ and $l>4$
\begin{equation}\label{contraction estimate Linf}
\begin{aligned}
\ \Vert \l v\r^l\mathcal{T}[h_1,h_2,h_3] \Vert_{L^\infty_{v}} 
\lesssim  \|\l v\r^{l}h_1\|_{L^\infty_v}\|\l v\r^{l}h_2\|_{L^\infty_v}\|\l v\r^{l}h_3\|_{L^\infty_v}.
\end{aligned}
\end{equation}
\end{lemma}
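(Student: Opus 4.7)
The plan is to treat each operator $\mathcal{T} \in \{\mathcal{L}_1, \mathcal{L}_2, \mathcal{G}_1, \mathcal{G}_2\}$ separately using the parametrized formulas \eqref{G_1 parametrized}--\eqref{L_1 parametrized}. In every case I would bound each factor $h_i$ pointwise by its weighted $L^\infty$ norm, i.e.\ $|h_i(w)| \leq \|\langle v\rangle^l h_i\|_{L^\infty_v} \langle w\rangle^{-l}$ at whichever of $v, v_1, v^*, v_1^*$ it is evaluated, and then reduce matters to a uniform-in-$v$ bound on a weighted kernel integral in $(\sigma, v_1)$.

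For $\mathcal{L}_1$ and $\mathcal{L}_2$, the first function appears as $h_1(v)$, so the outer weight $\langle v\rangle^l$ is immediately absorbed into $\|\langle v\rangle^l h_1\|_{L^\infty_v}$. Substituting the pointwise bounds on $h_2$ and $h_3$ reduces everything to
$$\int_{\mathbb{R}^3 \times \mathbb{S}^2} \frac{|v-v_1|}{\langle v_1\rangle^l \langle w\rangle^l}\, b\Bigl(\tfrac{v-v_1}{|v-v_1|}\cdot\sigma\Bigr) d\sigma\, dv_1, \qquad w \in \{v_1^*, v^*\}.$$
Since $l > 4 \geq 3$, one has $\langle w\rangle^{-l} \leq \langle w\rangle^{-3}$, so Lemma \ref{singint} (applied with $\gamma=1$) gives a uniform bound on the $\sigma$-integral, and the outer $v_1$-integral is controlled by $\int \langle v_1\rangle^{-l} dv_1 < \infty$.

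For $\mathcal{G}_1$ the first slot is $h_1(v_1)$, so the weight $\langle v\rangle^l$ cannot be pulled out directly. The key device is energy conservation \eqref{conservation of energy}, which yields $|v|^2 \leq |v^*|^2 + |v_1^*|^2$ and hence $\langle v\rangle^l \lesssim \langle v^*\rangle^l + \langle v_1^*\rangle^l$. This transfers the weight onto $h_2(v^*)$ or $h_3(v_1^*)$, and each resulting piece has the same structure as the loss estimate above and is controlled as before.

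The subtle case is $\mathcal{G}_2$, where $h_1(v)$ is outside the integral but neither $v^*$ nor $v_1^*$ serves directly as the integration variable against a decaying factor $\langle v_1\rangle^{-l}$. After extracting the three weighted $L^\infty$ norms one needs a uniform-in-$v$ bound on
$$\int \frac{|v-v_1|}{\langle v^*\rangle^l \langle v_1^*\rangle^l}\, b\Bigl(\tfrac{v-v_1}{|v-v_1|}\cdot\sigma\Bigr) d\sigma\, dv_1.$$
Here I would invoke \eqref{conservation of relative velocities} to write $|v-v_1| = |v^*-v_1^*| \leq \langle v^*\rangle + \langle v_1^*\rangle$, producing the two pieces $\int \langle v^*\rangle^{-(l-1)} \langle v_1^*\rangle^{-l}\, b\, d\sigma dv_1$ and its mirror with the roles of $v^*$ and $v_1^*$ swapped. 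Dropping the factor $\leq 1$ and applying Lemma \ref{pre average lemma} bounds each by $\|\langle \cdot\rangle^{-(l-1)}\|_{L^1(\mathbb{R}^3)}$, which is finite iff $l-1 > 3$, i.e.\ $l > 4$. This is the only place where the hypothesis $l > 4$ is used sharply, and I expect it to be the main (and essentially only) obstacle; the other three operators would already close under the weaker assumption $l > 3$. Combining the four cases yields the claimed trilinear bound.
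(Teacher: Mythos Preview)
Your argument is correct and rests on the same two tools the paper uses (Lemma~\ref{singint} and Lemma~\ref{pre average lemma}), so at the level of ideas it matches the paper's proof. The difference is organizational: the paper's one-line proof simply cites the already established trilinear bounds \eqref{trilossmain}, \eqref{G_2 weighted estimate}, \eqref{G1 weighted estimate} and then converts the $L^1_v$ factors appearing there into weighted $L^\infty_v$ norms via $\|\langle v\rangle u\|_{L^1_v}\le \|\langle v\rangle^{1-l}\|_{L^1_v}\|\langle v\rangle^l u\|_{L^\infty_v}$, which is where $l>4$ enters. You instead redo those trilinear estimates from scratch by pointwise substitution, which is slightly more work but has the same content. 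One minor remark: for $\mathcal{G}_1$ you transfer the outer weight via energy conservation $\langle v\rangle^l\lesssim\langle v^*\rangle^l+\langle v_1^*\rangle^l$ and then apply Lemma~\ref{singint} directly, whereas the paper's route through \eqref{G1 weighted estimate} uses the cutoff \eqref{comparing characteristics}; your more direct treatment closes $\mathcal{G}_1$ already for $l>3$, while the paper's packaging of \eqref{G1 weighted estimate} forces $l>4$ for that term as well. Either way the binding constraint is $\mathcal{G}_2$, exactly as you identified.
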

\begin{proof}
 This follows from \eqref{trilossmain}, \eqref{G_2 weighted estimate}-\eqref{G1 weighted estimate}, the inequality 
 $$\|\l v\r u\|_{L^1_v}\leq \|\l v\r^{1-l}\|_{L^1_v}\|\l v\r^{l}u\|_{L^\infty_v}, $$ 
 and the fact $l>4$.
\end{proof}

\section{Global dispersive solutions} \label{section:GWP}
In this section we construct global dispersive solutions to \eqref{KWE}. Since the argument is perturbative, the free transport plays a fundamental role in studying the full nonlinear problem. We start by recalling some of its basic properties. We also define the functional analytic spaces used in our proof. Then we show local existence in a weighted $L^\infty$ space. To continue our solutions for all times, we rely on a bootstrap argument, which boils down to proving dispersive, amplitude and integrability bounds. We conclude by showing that our analysis also yields propagation of $L^1_{xv}$ -- moments (even though these are not needed in the global existence proof), and conservation in the special cases of mass, momentum and energy.

\subsection{Free transport}
The free transport semi-group is defined as follows:
\begin{definition}[Free transport semi-group] Let $f:\R_x^3\times\R_v^3\to\R$.
We denote
\begin{align}\label{transport semigroup}
    \big(\mathcal{S}(t)f \big)(x,v) := f(x-vt , v). 
\end{align}
\end{definition}
\begin{remark}
It is evident that the linear flow is measure preserving. In particular, for any $f\in L^1(\R^3_x\times\R^3_v)$, there holds
\begin{equation}\label{measure preserving}
\int_{\R^3\times\R^3} \big( \mathcal{S}(t)f \big) (x,v)\,dx\,dv=\int_{\R^3\times\R^3}f(x,y)\,dx\,dv ,\quad\forall t\in\R.   
\end{equation}
\end{remark}
\begin{remark}
An obvious, yet important, property of the linear semigroup is that it commutes with velocity weights:
\begin{equation}\label{commutation identity}
\l v\r^l(\mathcal{S}(t)f(x,v))=\mathcal{S}(t)(\l v\r^lf(x,v)),\quad \forall \,t\in\R,\quad l\geq 0.
\end{equation}
\end{remark}
We now recall several well known properties of this semi-group, see e.g. \cite{Arsenio}.
They consist in dispersive estimates and conservation of weighted $L^p$ norms, which we prove for convenience of the reader.
\begin{proposition}
We have for $p \geq r \geq 1$
\begin{align} \label{decaylin}
    \Vert \mathcal{S}(t) f \Vert_{L^{p}_x L^r _v} \leq \vert t \vert ^{-3\big(\frac{1}{r} - \frac{1}{p} \big)} \Vert f \Vert_{L^r_x L^{p}_v}. 
\end{align}
Moreover for $1 \leq p \leq \infty$, $l\geq 0$, and all $t \in \mathbb{R}$, we have
\begin{align} 
 \label{conslinweight}  \Vert \langle v \rangle^l \mathcal{S}(t) f \Vert_{L^{p}_{xv}} &= \Vert  \mathcal{S}(t) \langle v \rangle^l f \Vert_{L^{p}_{xv}} = \Vert \langle v \rangle^l f \Vert_{L^{p}_{xv}}.
\end{align}
\end{proposition}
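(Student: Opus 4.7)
The plan is to treat the two statements separately, with the norm-preservation identities being an immediate consequence of the remarks preceding the proposition, and the dispersive estimate following from two changes of variables sandwiching one application of Minkowski's integral inequality.

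For \eqref{conslinweight}, the key observations have already been recorded. The commutation relation \eqref{commutation identity} gives $\langle v\rangle^l\,\mathcal{S}(t)f=\mathcal{S}(t)(\langle v\rangle^l f)$, reducing the claim to showing that $\mathcal{S}(t)$ is an isometry on $L^p_{xv}$ for every $1\le p\le\infty$. This is the $p$-analogue of \eqref{measure preserving}: for fixed $v$, the map $x\mapsto x-vt$ has unit Jacobian, so Fubini and a change of variables give $\|\mathcal{S}(t)g\|_{L^p_{xv}}=\|g\|_{L^p_{xv}}$ for all $1\le p<\infty$, and the $p=\infty$ case is obvious. Applying this with $g=\langle v\rangle^l f$ finishes \eqref{conslinweight}.

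For the dispersive estimate \eqref{decaylin}, I would proceed as follows. Fix $x\in\mathbb{R}^3$ and perform the change of variable $y=x-vt$ in the inner integral (so $v=(x-y)/t$ and $dv=|t|^{-3}dy$); this yields
\begin{equation*}
\|\mathcal{S}(t)f(x,\cdot)\|_{L^r_v}=|t|^{-3/r}\Big(\int_{\mathbb{R}^3}|f(y,(x-y)/t)|^r\,dy\Big)^{1/r}.
\end{equation*}
Taking the $L^p_x$ norm and using Minkowski's integral inequality, which is valid precisely because $p\ge r$, I may interchange the order of the norms:
\begin{equation*}
\|\mathcal{S}(t)f\|_{L^p_xL^r_v}\le |t|^{-3/r}\Big\|\,\|f(y,(x-y)/t)\|_{L^p_x}\Big\|_{L^r_y}.
\end{equation*}
For fixed $y$, a second change of variable $w=(x-y)/t$ (so $dx=|t|^{3}dw$) gives $\|f(y,(x-y)/t)\|_{L^p_x}=|t|^{3/p}\|f(y,\cdot)\|_{L^p_v}$, and plugging back in produces the claimed exponent $-3(1/r-1/p)$.

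The only nontrivial step is the application of Minkowski's integral inequality, which is precisely where the assumption $p\ge r$ is needed; the two changes of variables are the transport-theoretic way of producing the dispersive decay, entirely analogous to the way dispersion for the free Schr\"odinger flow is often read off from the explicit kernel. No further estimates are required.
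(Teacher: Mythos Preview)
Your proof is correct and follows essentially the same approach as the paper: two changes of variables together with Minkowski's integral inequality for the dispersive estimate, and the commutation identity plus measure preservation for the weighted norm conservation. The only cosmetic difference is that the paper performs the second change of variables before applying Minkowski (obtaining $\|f\|_{L^p_{v'}L^r_{x'}}$ and then swapping), whereas you apply Minkowski first and then change variables; the two are equivalent.
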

\begin{proof}
We first prove the dispersive bound \eqref{decaylin}. Consider $p\geq r\geq 1$ and $t \neq 0$. Then, given $x\in\R^3$, we change variables $x'=x-vt$ to obtain
$$\|\mathcal{S}(t)f(x,v)\|_{L^r_v}=\vert t \vert ^{-\frac{3}{r}}\left\|f\left(x',\frac{x-x'}{t}\right)\right\|_{L^r_{x'}}.$$
Hence, changing variables $v'=\frac{x-x'}{t}$, and using Minkowski's integral inequality, we obtain
\begin{align*}
\|\mathcal{S}(t)f(x,v)\|_{L^p_x L^r_v}
=\vert t \vert ^{-3\left(\frac{1}{r}-\frac{1}{p}\right)}\|f(x',v')\|_{L^p_{v'} L^r_{x'}}&\leq \vert t \vert ^{-3\left(\frac{1}{r}-\frac{1}{p}\right)}\|f(x',v')\|_{L^r_{x'}L^p_{v'}}\\
&=\vert t \vert ^{-3\left(\frac{1}{r}-\frac{1}{p}\right)}\|f(x,v)\|_{L^r_x L^p_v}.
\end{align*}
Now \eqref{conslinweight} follows from \eqref{commutation identity} and \eqref{measure preserving}.
\end{proof}

\subsection{Functional spaces and definition of a solution} 

For the rest of the paper let us fix $M>8$ and $\frac{5}{2}<\alpha<M-\frac{3}{2}$.

Motivated by the free transport estimates of the previous section, we define the space of our initial data 
\begin{equation} \label{defXM}
X_{M,\alpha}=\{f\in \l v \r^{-M} L^{\infty}_{xv} \cap L^1_{xv} :\|f\|_{X_{M,\alpha}}<\infty\},  
\end{equation}
where we denote
\begin{equation} \label{defnormXM}
\|f\|_{X_{M,\alpha}} :=    \Vert \l v \r^M f \Vert_{L^{\infty}_{xv}} + \Vert f \Vert_{L^{1}_{xv}}  + \sup_{t \in \R} \langle t \rangle^{3/2} \big( \Vert \l v \r^\alpha \mathcal{S}(t) f \Vert_{L^{\infty}_x L^2_v} + \Vert \l v \r \mathcal{S}(t) f \Vert_{L^2_x L^1_v} \big) .
\end{equation}
We note that $(X_{M,\alpha},\|\cdot\|_{X_{M,\alpha}})$ is a Banach space.
\begin{remark}
$X_{M,\alpha}$ contains $\langle v \rangle^{-1} L^1_x L^{2}_v \big(\mathbb{R}_x^3 \times \mathbb{R}_v^3 \big) \cap \langle v \rangle^{-\alpha} L^2_x L^{\infty}_v \big(\mathbb{R}_x^3 \times \mathbb{R}_v^3 \big) \cap \langle v \rangle^{-M} L^{\infty}_{xv} \big(\mathbb{R}_x^3 \times \mathbb{R}_v^3 \big) \cap L^{1}_{xv} \big(\mathbb{R}_x^3 \times \mathbb{R}_v^3 \big)$ which (taking $M=8^+$) itself contains $\langle x \rangle^{-3^{-}} \langle v \rangle^{-8^{-}}  L^{\infty}_{xv}.$
\end{remark}

With the definition of $X_{M,\alpha}$ in hand now give the definition of a solution to \eqref{KWE}
\begin{definition}\label{definition of a solution} Let $I\subseteq\R$ be an interval with $0\in I$, and $f_0\in X_{M,\alpha}$. We say that $f\in \mathcal{C}(I,\l v\r^{-M}L^\infty_{xv})$ is a solution of \eqref{KWE} in $I$ with initial data $f_0$, if
\begin{equation}\label{Duhamel formula}
f(t)=\mathcal{S}(t)f_0+\int_0^t\mathcal{S}(t-s)\mathcal{C}[f](s)\,ds,\quad\forall t\in I.    
\end{equation}  
If $I=\R$, we say that the solution $f$ is global.
\end{definition}
\begin{remark}\label{remark on well defined terms}
For $f\in \mathcal{C}(I,\l v\r^{-M}L^\infty_{xv})$, we have  
$$\int_0^t\mathcal{S}(t-s)\mathcal{C}[f](s)\,ds\in \l v\r^{-M}L^\infty_{xv},\quad\forall t\in I,$$
thus every term in \eqref{Duhamel formula} is a well-defined element of $\l v\r^{-M}L^\infty_{xv}$.
Indeed, for $t\in I$, \eqref{conslinweight} and \eqref{contraction estimate Linf} imply
\begin{align*}
\left\|\l v\r^M \int_0^t \mathcal{S}(t-s)\mathcal{C}[f](s) \,ds\right\|_{L^\infty_{xv}}  & \lesssim |t|\sup_{s\in I}\|\l v\r^M f(s)\|_{L^\infty_{xv}}^3<+\infty
\end{align*}
\end{remark}
\begin{remark}
As stressed in the introduction, we work with strong solutions in this paper. This differs from other works in the literature \cite{BGGL}, \cite{Am}, \cite{AmMiPaTa24} where, due to the functional analytic framework used, the less familiar concept of mild solution has to be introduced. 
\end{remark}

\subsection{Main result}
We now state the main result of this section. Given $\varepsilon_0>0$, let us denote
\begin{equation}\label{X ball}
    B_{X_{M,\alpha}}(\varepsilon_0):=\{f\in X_{M,\alpha}\,:\,\|f\|_{X_{M,\alpha}}\leq \varepsilon_0\},
\end{equation}
the closed ball of radius $\varepsilon_0$ around the origin.
\begin{theorem} \label{thmdispersive}
    There exists $\overline{\varepsilon} > 0$ and $A>6$ such that if $0<\varepsilon_0 < \overline{\varepsilon}$ and $f_0 \in B_{X_{M,\alpha}}(\varepsilon_0),$ then \eqref{KWE}  has a unique global solution $f\in\mathcal{C}(\R,\l v\r^{-M}L^\infty_{xv})$ which is dispersive, meaning it satisfies the bound 
    \begin{align} \label{global bounds}
        \sup_{t \in \mathbb{R}} \big\{ \langle t \rangle^{3/2} \Vert \langle v \rangle f(t) \Vert_{L^{2}_x L^1_v} + \langle t \rangle^{3/2} \Vert \langle v \rangle^\alpha f(t) \Vert_{L^{\infty}_x L^2_v} + \Vert f(t) \Vert_{L^{1}_{xv}} + \Vert \langle v \rangle^M f(t) \Vert_{L^{\infty}_{xv}} \big\} < A\varepsilon_0. 
    \end{align}
    Moreover the solution is continuous with respect to initial data in the $L^1_{xv}$ topology i.e. if $f,g\in \mathcal{C}(\R,\l v\r^{-M}L^\infty_{x,v})$ are the solutions corresponding to $f_0,g_0\in B_{X_{M,\alpha}}(\varepsilon_0)$, there holds the estimate
    \begin{align} \label{contindep}
  \sup_{t \in \R} \Vert  f(t) - g(t)  \Vert_{L^{1}_{xv}} & \leq 2 \Vert  f_0 - g_0 \Vert_{L^1_{xv}}.
\end{align}
\end{theorem}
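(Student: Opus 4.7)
The plan is to combine local well-posedness in $\l v\r^{-M}L^\infty_{xv}$ with a continuation argument driven by a bootstrap on the four quantities in \eqref{global bounds}, and to derive \eqref{contindep} from a Gronwall estimate on the $L^1_{xv}$ difference.

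First I would establish local existence on some $[-T,T]$ by a Banach fixed-point argument applied to the Duhamel map $\Phi f(t):=\mathcal{S}(t)f_0+\int_0^t\mathcal{S}(t-s)\mathcal{C}[f](s)\,ds$ on a small ball of $C([-T,T],\l v\r^{-M}L^\infty_{xv})$. Since $\mathcal{S}$ commutes with the weight $\l v\r^M$ and preserves weighted $L^\infty_{xv}$ norms (see \eqref{commutation identity} and \eqref{conslinweight}), and the trilinear estimate \eqref{contraction estimate Linf} of Lemma \ref{contractLWP} applies because $M>8>4$, $\Phi$ is a contraction for $T$ small enough, giving a unique maximal solution with lifespan depending only on $\varepsilon_0$.

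To continue the solution globally I set up a bootstrap. Fix $A>6$ and let $T^\ast$ be the supremum of times $T$ on which the solution exists and \eqref{global bounds} holds with the constant $A\varepsilon_0$; I upgrade the bound to $(A/2)\varepsilon_0$ on $[0,T^\ast)$, which for small $\varepsilon_0$ contradicts $T^\ast<+\infty$. For $\|f(t)\|_{L^1_{xv}}$ I use that $\mathcal{S}$ preserves $L^1_{xv}$ together with \eqref{joint L1} and Hölder in $x$, obtaining $\|\mathcal{C}[f](s)\|_{L^1_{xv}}\lesssim \|f(s)\|_{L^1_{xv}}\|\l v\r f(s)\|_{L^\infty_xL^1_v}\|\l v\r f(s)\|_{L^\infty_{xv}}$, where the first factor on the right is controlled via Cauchy--Schwarz in $v$ by $\|\l v\r^\alpha f(s)\|_{L^\infty_xL^2_v}$ (using $\alpha>5/2$), producing an integrable $\l s\r^{-3/2}$ factor. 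For $\|\l v\r^M f(t)\|_{L^\infty_{xv}}$ I use \eqref{conslinweight} together with \eqref{joint weighted Linfty} for $l=M$ analogously. The two dispersive norms are more delicate. For $\|\l v\r f(t)\|_{L^2_xL^1_v}$ the free-transport part is $\le \l t\r^{-3/2}\varepsilon_0$ by the definition of $X_{M,\alpha}$; for the Duhamel integral I use the dispersive bound \eqref{decaylin} with $(r,p)=(1,2)$, and bound $\|\l v\r\mathcal{C}[f](s)\|_{L^1_xL^2_v}\lesssim (A\varepsilon_0)^3\l s\r^{-3/2}$ via \eqref{joint L2 1w} with $l=1$, Cauchy--Schwarz in $x$, and the interpolation $\|\l v\r f\|_{L^2_{xv}}^2\le \|\l v\r^M f\|_{L^\infty_{xv}}\|f\|_{L^1_{xv}}$ (valid since $M\ge 2$). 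The non-integrable singularity of $|t-s|^{-3/2}$ at $s=t$ is tamed by switching, on $|t-s|\le 1$, to the non-dispersive bound $\|\mathcal{S}(\tau) g\|_{L^2_xL^1_v}\lesssim \|\l v\r^a g\|_{L^2_{xv}}$ (Cauchy--Schwarz in $v$, any $a>3/2$), the lost $v$-weight being absorbed by $M>8$. The convolution $\l t\r^{3/2}\int_0^t\l t-s\r^{-3/2}\l s\r^{-3/2}\,ds$ is then uniformly bounded. The argument for $\|\l v\r^\alpha f(t)\|_{L^\infty_xL^2_v}$ is analogous, using \eqref{decaylin} with $(r,p)=(2,\infty)$ and \eqref{joint weighted Linfty} with $l=\alpha<M$.

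For uniqueness and continuous dependence in $L^1_{xv}$, I subtract the Duhamel formulas for two solutions $f,g$ with data $f_0,g_0\in B_{X_{M,\alpha}}(\varepsilon_0)$, invoke $L^1_{xv}$-preservation by $\mathcal{S}$, and apply \eqref{contraction estimate L1} combined with the already-established dispersive bounds on $f$ and $g$ to obtain
\[
\|f(t)-g(t)\|_{L^1_{xv}}\le \|f_0-g_0\|_{L^1_{xv}}+C(A\varepsilon_0)^2\int_0^t\l s\r^{-3/2}\|f(s)-g(s)\|_{L^1_{xv}}\,ds,
\]
so that Gronwall together with the smallness of $\varepsilon_0$ yields \eqref{contindep}. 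The principal obstacle is the dispersive stage: one must distribute the three factors of $f$ in each trilinear bound so that every factor which does not decay uniformly in time is carried by an integrable-in-$s$ decaying quantity, while the remaining factors are absorbed by uniformly bounded weighted $L^\infty$ or $L^1$ norms; simultaneously the short-time singularity of the dispersive kernel has to be offset via a non-dispersive Cauchy--Schwarz bound that stays within the weight budget secured by $M>8$ and $\alpha<M-3/2$.
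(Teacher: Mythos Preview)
Your proposal is correct and follows essentially the same route as the paper: local existence by contraction in $\l v\r^{-M}L^\infty_{xv}$ via Lemma \ref{contractLWP}, a bootstrap on the four quantities in \eqref{global bounds} using precisely the trilinear bounds \eqref{joint weighted Linfty}--\eqref{joint L1} together with the decay \eqref{decaylin}, splitting the Duhamel integral at $|t-s|=1$ to tame the $|t-s|^{-3/2}$ singularity, and $L^1_{xv}$ continuous dependence through \eqref{contraction estimate L1}. The only cosmetic differences are that the paper takes sup in $s$ directly rather than invoking Gronwall for \eqref{contindep}, and it handles $t<0$ explicitly by passing to the time-reversed problem \eqref{negative equation}.
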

\begin{remark}
We elected to only prove continuity with respect to initial data for the $L^1_{xv}$ norm for simplicity. Indeed the contraction estimate \eqref{contraction estimate L1} is the most straightforward.
\end{remark}

As a corollary of our analysis we show that these global solutions propagate $L^1_{xv}$ -- moments and conserve the total mass, momentum, and energy:
\begin{corollary} \label{propagation moments}
Let $\bar{\varepsilon}$ be as in the statement of Theorem \ref{thmdispersive} and consider $N>0$. There exists $\overline{\varepsilon}_2<\overline{\varepsilon}$ such that if $0<\varepsilon_0<\overline{\varepsilon}_2$ and $f_0 \in B_{X_{M,\alpha}}(\varepsilon_0)$ is such that $\Vert \langle v \rangle^N f_0 \Vert_{L^1_{xv}} < \varepsilon_0,$ then the global solution $f$ given by Theorem \ref{thmdispersive} satisfies
\begin{align}\label{propagation of moments statement}
    \sup_{t \in \mathbb{R}} \Vert \langle v \rangle^N f(t) \Vert_{L^1_{xv}} <  2 \varepsilon_0.
\end{align}

Moreover this control can be upgraded to conservation in the case where the weights are $1,v$ or $\vert v \vert^2.$ That is, for $\phi \in \lbrace 1,v,\vert v \vert^2 \rbrace$ we have the conservation laws
\begin{align}\label{conservation laws}
    \int_{\mathbb{R}^6} \phi(v) f(t,x,v) \, dv dx = \int_{\mathbb{R}^6} \phi(v) f_0(x,v) \, dv dx 
\end{align}
for all $t \in \mathbb{R}.$

\end{corollary}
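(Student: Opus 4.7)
The plan is to first establish the propagation of moments \eqref{propagation of moments statement} via a Duhamel/Gronwall argument leveraging the dispersive bounds of Theorem \ref{thmdispersive}, and then deduce the conservation laws \eqref{conservation laws} by testing the Duhamel formula against $\phi\in\{1,v,|v|^2\}$ and invoking the weak formulation \eqref{weak formulation}.

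For \eqref{propagation of moments statement}, I would start from the Duhamel formula \eqref{Duhamel formula}, multiply by $\langle v\rangle^N$, take $L^1_{xv}$ norms, and use the isometry property \eqref{conslinweight} of the free transport to get
\[
\|\langle v\rangle^N f(t)\|_{L^1_{xv}}\le\|\langle v\rangle^N f_0\|_{L^1_{xv}}+\int_0^{|t|}\|\langle v\rangle^N\mathcal{C}[f](s)\|_{L^1_{xv}}\,ds.
\]
I would then apply estimate \eqref{joint L1 lw} pointwise in $x$ and Hölder with exponents $(1,\infty,\infty)$ in $x$ to get
\[
\|\langle v\rangle^N\mathcal{C}[f](s)\|_{L^1_{xv}}\lesssim\|\langle v\rangle^N f(s)\|_{L^1_{xv}}\,\|\langle v\rangle f(s)\|_{L^\infty_x L^1_v}\,\|\langle v\rangle^3 f(s)\|_{L^\infty_{xv}}.
\]
The key step is extracting time decay from the middle factor: since $\alpha>5/2$, Cauchy--Schwarz in $v$ yields
\[
\|\langle v\rangle f(s)\|_{L^\infty_x L^1_v}\lesssim\|\langle v\rangle^{1-\alpha}\|_{L^2_v}\,\|\langle v\rangle^\alpha f(s)\|_{L^\infty_x L^2_v}\lesssim\langle s\rangle^{-3/2}\varepsilon_0,
\]
where the last bound comes from Theorem \ref{thmdispersive}. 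Since $M>3$, the last factor is uniformly bounded by $\varepsilon_0$. Combining, $\|\langle v\rangle^N\mathcal{C}[f](s)\|_{L^1_{xv}}\lesssim\varepsilon_0^2\,\langle s\rangle^{-3/2}\|\langle v\rangle^N f(s)\|_{L^1_{xv}}$, and since $\langle s\rangle^{-3/2}$ is integrable over $\R$, a standard bootstrap/Gronwall argument (together with local finiteness of the moment, which may be propagated by continuity from the initial data) gives \eqref{propagation of moments statement} for $\varepsilon_0$ small enough.

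For \eqref{conservation laws}, I would test the Duhamel formula against $\phi(v)\in\{1,v,|v|^2\}$. Using the measure preservation \eqref{measure preserving} and the fact that $\phi$ depends only on $v$, a straightforward change of variables $y=x-vt$ shows that
\[
\int_{\R^6}\phi(v)\,\mathcal{S}(\tau)g(x,v)\,dv\,dx=\int_{\R^6}\phi(v)\,g(x,v)\,dv\,dx
\]
for any $\tau\in\R$ and any sufficiently integrable $g$. Applying this to both $\mathcal{S}(t)f_0$ and $\mathcal{S}(t-s)\mathcal{C}[f](s)$ in \eqref{Duhamel formula} reduces \eqref{conservation laws} to showing $\int_{\R^6}\phi\,\mathcal{C}[f](s)\,dv\,dx=0$ for a.e.~$s$. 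This follows from the weak formulation \eqref{weak formulation}: after integrating in $x$, the integrand on the right-hand side contains the factor $\phi+\phi_1-\phi_2-\phi_3$, which vanishes identically when $\phi=1$ and vanishes on the resonant manifold $\{\Sigma=0,\Omega=0\}$ when $\phi=v$ (by \eqref{Sigma}) or $\phi=|v|^2$ (by \eqref{Omega}). The integrability required to justify \eqref{weak formulation} with these test functions follows from the just-established moment propagation \eqref{propagation of moments statement} applied with $N\ge 2$, together with the $L^\infty_{xv}$ bound on $\langle v\rangle^M f$.

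The main obstacle is the moment estimate. The cubic collision operator carries an extra power of $|v|$ from the hard-sphere kernel, so naively the $L^1_{xv}$ moment cannot be closed without producing extra weights. The saving mechanism is the gain--loss cancellation encoded in the trilinear bound \eqref{joint L1 lw}, which puts only the weight $\langle v\rangle^N$ on one factor and asks for $\langle v\rangle$ and $\langle v\rangle^3$ on the other two; the dispersive $L^\infty_x L^2_v$ norm of Theorem \ref{thmdispersive} then supplies the integrable $\langle s\rangle^{-3/2}$ decay exactly because of the polynomial-weighted structure of $X_{M,\alpha}$ with $\alpha>5/2$. This delicate matching of weights, decay rates and integrability exponents is the crux of the proof.
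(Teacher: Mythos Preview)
Your proposal is correct and follows essentially the same route as the paper: Duhamel plus \eqref{conslinweight}, then \eqref{joint L1 lw} together with Cauchy--Schwarz in $v$ to convert $\|\langle v\rangle f\|_{L^\infty_x L^1_v}$ into the dispersive $\langle s\rangle^{-3/2}\|\langle v\rangle^\alpha f\|_{L^\infty_x L^2_v}$ norm, followed by absorption of the supremum; and for the conservation laws, measure-preservation of transport plus the weak formulation \eqref{weak formulation} on the resonant manifold, justified by the moment bound just proved. One small correction to your closing commentary: the control of the extra $|v|$ in \eqref{joint L1 lw} comes from the angular averaging Lemma~\ref{singint} applied term by term, not from any gain--loss cancellation.
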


\subsection{Local existence}

We start by proving a local existence result for \eqref{KWE}. Recall the notation introduced in \eqref{defnormXM}, \eqref{X ball}. 
\begin{theorem} \label{local existence}
 There exists $\overline{\varepsilon}_0>0$  such that for all $0< \varepsilon_0 < \overline{\varepsilon}_0,$ and $f_0 \in B_{X_{M,\alpha}}(\varepsilon_0)$, equation \eqref{KWE} with initial data $f_0$ has a unique solution $f\in\mathcal{C}([0;3],\l v\r^{-M}L^\infty_{x,v})$. Moreover, $f$ satisfies the dispersive estimate:
\begin{equation}\label{dispersive estimate local}
   \sup_{t \in [0;3]} \big\{ \langle t \rangle^{3/2} \Vert \langle v \rangle f(t) \Vert_{L^{2}_x L^1_v} + \langle t \rangle^{3/2} \Vert \langle v \rangle^\alpha f(t) \Vert_{L^{\infty}_x L^2_v} + \Vert f(t) \Vert_{L^{1}_{xv}} + \Vert \langle v \rangle^M f(t) \Vert_{L^{\infty}_{xv}} \big\} \leq A_0\varepsilon_0, 
\end{equation}
for some numerical constant $A_0>6$ independent of $\varepsilon_0$.
\end{theorem}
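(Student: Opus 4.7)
The plan is to run a classical Banach fixed-point iteration on the Duhamel map
\[\Phi(f)(t) := \mathcal{S}(t)f_0 + \int_0^t \mathcal{S}(t-s)\mathcal{C}[f](s)\,ds\]
in the auxiliary space
\[ \widetilde{Y} := \Bigl\{ f \in L^\infty([0,3]; \langle v\rangle^{-M}L^\infty_{xv}) : \|f\|_{\widetilde{Y}} := \sup_{t \in [0,3]}\bigl( \|f(t)\|_{L^1_{xv}} + \|\langle v\rangle^M f(t)\|_{L^\infty_{xv}} \bigr) < \infty \Bigr\}, \]
whose norm captures only the two time-independent components of \eqref{dispersive estimate local}. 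I will recover the two $\langle t\rangle^{3/2}$-weighted dispersive components \emph{a posteriori}, since on the short interval $[0,3]$ the factor $\langle t\rangle^{3/2}$ is uniformly bounded and these components turn out to follow from elementary weighted interpolation. I take the closed ball $B := \{f \in \widetilde{Y} : \|f\|_{\widetilde{Y}} \leq A_0\varepsilon_0/2\}$ for an $A_0 > 6$ to be chosen, and show that $\Phi$ maps $B$ into itself and is a strict contraction in the weaker metric $\sup_t\|\cdot\|_{L^1_{xv}}$, under which $B$ is complete. The unique fixed point is the desired solution, and its time continuity in $\langle v\rangle^{-M}L^\infty_{xv}$ follows from the Duhamel formula combined with the trilinear bounds on $\mathcal{C}[f]$.

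For $\Phi(B)\subseteq B$ the linear contribution is at most $2\varepsilon_0$ since both components of $\|\cdot\|_{\widetilde{Y}}$ are conserved by $\mathcal{S}$ via the identities \eqref{measure preserving} and \eqref{conslinweight}. For the Duhamel integral $N[f]$, the same conservation reduces matters to bounding $\|\mathcal{C}[f](s)\|_{L^1_{xv}}$ and $\|\langle v\rangle^M\mathcal{C}[f](s)\|_{L^\infty_{xv}}$ pointwise in $s \in [0,t]\subseteq[0,3]$. These follow from the joint trilinear bounds \eqref{joint L1} (after integrating in $x$ and using $\|\langle v\rangle f\|_{L^\infty_x L^1_v} \lesssim \|\langle v\rangle^M f\|_{L^\infty_{xv}}$, valid since $M>4$) and \eqref{contraction estimate Linf}; each gives $\lesssim \|f\|_{\widetilde{Y}}^3$. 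Integration over $s$ contributes a factor of at most $3$, so picking $A_0 = 8$ and $\overline{\varepsilon}_0$ small closes $\Phi(B)\subseteq B$. For the contraction I exploit trilinearity: $\mathcal{C}[f]-\mathcal{C}[g]$ decomposes into three trilinear terms linear in $f-g$, and the contraction bound \eqref{contraction estimate L1} together with conservation of $L^1_{xv}$ by $\mathcal{S}$ yields $\sup_t\|\Phi(f)(t)-\Phi(g)(t)\|_{L^1_{xv}} \leq C(A_0\varepsilon_0)^2 \sup_t\|f(t)-g(t)\|_{L^1_{xv}}$, a strict contraction for $\overline{\varepsilon}_0$ small. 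Banach's theorem then produces the unique fixed point $f \in B$.

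It remains to upgrade the $\widetilde{Y}$-bound on $f$ to the full estimate \eqref{dispersive estimate local}. The component $\|\langle v\rangle^\alpha f\|_{L^\infty_x L^2_v}$ comes directly from the pointwise inequality $|f|\leq\langle v\rangle^{-M}\|\langle v\rangle^M f\|_{L^\infty_{xv}}$: squaring and integrating in $v$, the constraint $\alpha < M - 3/2$ renders $\int\langle v\rangle^{2\alpha-2M}\,dv$ finite, yielding $\|\langle v\rangle^\alpha f(t)\|_{L^\infty_x L^2_v} \lesssim \|\langle v\rangle^M f(t)\|_{L^\infty_{xv}}$. The component $\|\langle v\rangle f\|_{L^2_x L^1_v}$ is handled by Cauchy--Schwarz in $v$, which uses $\alpha > 5/2$: pointwise in $x$ one has $\|\langle v\rangle f(x,\cdot)\|_{L^1_v} \lesssim \|\langle v\rangle^{5/2+\epsilon}f(x,\cdot)\|_{L^2_v}$ for any small $\epsilon>0$, and taking the $L^2_x$ norm then applying the crude interpolation $\|\langle v\rangle^{5/2+\epsilon} f\|_{L^2_{xv}}^2 \leq \|\langle v\rangle^M f\|_{L^\infty_{xv}}\|f\|_{L^1_{xv}}$ (trivially valid since $M > 5+2\epsilon$) yields $\|\langle v\rangle f(t)\|_{L^2_x L^1_v} \lesssim (\|\langle v\rangle^M f(t)\|_{L^\infty_{xv}}\|f(t)\|_{L^1_{xv}})^{1/2}$. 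Multiplying both dispersive bounds by the bounded factor $\langle t\rangle^{3/2}\leq\langle 3\rangle^{3/2}$ and absorbing constants into $A_0$ delivers \eqref{dispersive estimate local}. The main obstacle throughout is the weight algebra: the argument hinges precisely on the standing relations $5/2 < \alpha < M - 3/2$ and $M > 8$, which are exactly what is required for each weighted interpolation above to close.
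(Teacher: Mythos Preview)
Your proof is correct and follows essentially the same strategy as the paper: a Banach fixed-point argument for the Duhamel map on a short time interval, followed by recovery of the two dispersive norms via the same weighted interpolation (your last paragraph matches the paper's \eqref{L2xv}--\eqref{dispersive 2 local} almost verbatim). The only minor difference is cosmetic: the paper runs the contraction purely in the $\langle v\rangle^{-M}L^\infty_{xv}$ norm using \eqref{contraction estimate Linf} and derives the $L^1_{xv}$ bound afterward, whereas you build $L^1_{xv}$ into the fixed-point space from the outset and contract in the weaker $L^1_{xv}$ metric via \eqref{contraction estimate L1}---both choices close for the same reasons.
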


\begin{proof} 
Consider $\bar{\varepsilon}_0>0$. 
Given $0<\varepsilon_0<\bar{\varepsilon}_0$ and $f_0\in B_{X_{M,\alpha}}(\varepsilon_0)$, we define the mapping
$$
\Phi: f(t)  \longmapsto  \mathcal{S}(t) f_{0} + \int_0^{t} \mathcal{S}(t-s) \mathcal{C}[f] \, ds,\quad t\in[0,3].
$$  
We show that $\Phi$ is a contraction on 
$$\bm{B}:=\left\{f\in \mathcal{C}([0;3],\l v\r^{-M}L^\infty_{xv})\,:\, \sup_{t\in[0;3]}\|\l v\r^Mf(t)\|_{L^\infty_{xv}}\leq 2\varepsilon_0\right\},$$
as long as $\bar{\varepsilon}_0$ is small enough.
\newline
\newline
\underline{Stability:}\newline
Let $f\in \bm{B}$. Given $t\in[0,3]$,  the triangle inequality, followed by \eqref{conslinweight}  and the fact that $f_0\in B_{X_{M,\alpha}}(\varepsilon_0)$ yield
\begin{align}\label{Duhamel ineq L inf local}
\|\l v\r^M\Phi f (t)\|_{L^\infty_{xv}} 
&\leq \varepsilon_0+ \sum_{\mathcal{T}\in\{\mathcal{L}_1,\mathcal{L}_2,\mathcal{G}_1,\mathcal{G}_2\}}\int_0^t\|\l v\r^M\mathcal{T}[f,f,f](s)\|_{L^\infty_{xv}}\,ds.
\end{align}
Now, using  \eqref{contraction estimate Linf}, for $\mathcal{T}\in\{\mathcal{L}_1,\mathcal{L}_2,\mathcal{G}_1,\mathcal{G}_2\}$ we have
\begin{align*}
    \int_0^t\|\l v\r^M\mathcal{T}[f,f,f](s)\|_{L^\infty_{xv}}\,ds\lesssim\int_0^t\|\l v\r^M f(s)\|_{L^\infty_{xv}}^3\,ds\leq 3\sup_{s\in[0;3]}  \|\l v\r^M f(s)\|_{L^\infty_{xv}}^3 \lesssim\varepsilon_0^3 <\varepsilon_0,
\end{align*}
since $\varepsilon_0<\bar{\varepsilon}_0$ and $\bar{\varepsilon}_0$ is sufficiently small.
 Therefore by \eqref{Duhamel ineq L inf local}, we obtain
$$\sup_{t\in[0;3]}\|\l v\r^M\Phi f (t)\|_{L^\infty_{xv}}<2\varepsilon_0.$$

Moreover, $\Phi$ is clearly continuous on $[0;3]$, hence $\Phi:\bm{B}\to \bm{B}$.
\newline
\newline
\underline{Contraction:} \newline
Let $f,g \in \bm{B}$. Given $t\in[0;3]$, trilinearity of $\mathcal{L}_1,\mathcal{L}_2,\mathcal{G}_1,\mathcal{G}_2$ implies
\begin{align*}
   \big \vert \Phi f(t)& - \Phi g(t) \big \vert\\ 
    &\leq \sum_{\mathcal{T}\in\{\mathcal{L}_1,\mathcal{L}_2,\mathcal{G}_1,\mathcal{G}_2\}} \bigg \vert \int_0^t \mathcal{S}(t-s) \big( \mathcal{T}[f-g,f,f] + \mathcal{T}[g,f-g,f] + \mathcal{T}[g,g,f-g] \big)(s) \, ds \bigg \vert.
\end{align*}
Hence,    \eqref{conslinweight} followed by estimate \eqref{contraction estimate Linf} imply
\begin{align*}&\|\l v\r^M\left(\Phi f (t)-\Phi g(t)\right)\|_{L^\infty_{xv}}\\
&\lesssim \int_0^t \|\l v\r^M\left(f(s)-g(s)\right)\|_{L^\infty_{xv}}\\
&\quad\times\left(\|\l v\r^M f(s)\|_{L^\infty_{xv}}^2+\|\l v\r^M f(s)\|_{L^\infty_{xv}}\|\l v\r^M g(s)\|_{L^\infty_{xv}}+\|\l v\r^M g(s)\|_{L^\infty_{xv}}^2\right)\,ds\\
&\lesssim\varepsilon_0^2\|f-g\|_{\mathcal{C}([0;3],L_{x,v}^\infty)}<\frac{1}{2}\|f-g\|_{\mathcal{C}([0;3],L_{x,v}^\infty)},
\end{align*}
since $\varepsilon_0<\bar{\varepsilon}_0$ and $\bar{\varepsilon}_0$ is sufficiently small.

We conclude that $\Phi$ is a contraction of $\bm{B}$, hence the contraction mapping principle implies that \eqref{KWE} has a unique solution $f$ in $[0;3]$ with initial data $f_0$.

It remains to prove the dispersive bound \eqref{dispersive estimate local}.
By construction $f\in\bm{B}$, hence 
\begin{equation}\label{fisrst bound on Linf}
\sup_{t\in[0;3]}\|\l v\r^Mf(t)\|_{L^\infty_{xv}}\leq 2\varepsilon_0.    
\end{equation}
We now estimate $\|f(t)\|_{L^1_{xv}}$. For $t\in[0,3]$,  Duhamel's formula, the triangle inequality, \eqref{conslinweight}, and the fact that $f_0\in B_{X_{M,\alpha}}(\varepsilon_0)$   imply
\begin{align}\label{Duhamel ineq L1 local}
    \Vert f(t) \Vert_{L^1_{xv}}  
    & \leq \varepsilon_0 + \sum_{\mathcal{T} \in \lbrace \mathcal{L}_1, \mathcal{L}_2, \mathcal{G}_1, \mathcal{G}_2 \rbrace} \int_0^t \big \Vert \mathcal{T}[f,f,f](s) \big \Vert_{L^1_{xv}} \, ds
\end{align}
Hence, for $\mathcal{T} \in \lbrace \mathcal{L}_1, \mathcal{L}_2, \mathcal{G}_1, \mathcal{G}_2 \rbrace$, estimates \eqref{joint L1}, \eqref{fisrst bound on Linf} imply
\begin{align*}
 \int_0^t \big \Vert \mathcal{T}[f,f,f](s) \big \Vert_{L^1_{xv}} \, ds&\lesssim \int_0^t \|f(s)\|_{L^1_{xv}}\|\l v\r f(s)\|_{L^\infty_x L^1_v}\|\l v\r f(s)\|_{L^\infty_{xv}}\,ds\\
 &\leq 3\sup_{s\in[0;3]}\|f(s)\|_{L^1_{xv}}\|\l v\r f(s)\|_{L^\infty_x L^1_v}\|\l v\r f(s)\|_{L^\infty_{xv}}\\
    &\lesssim  \varepsilon_0^2 \sup_{s\in [0;3]} \Vert f(s) \Vert_{L^1_{xv}}<\frac{1}{2}\sup_{s\in[0;3]}\|f(s)\|_{L^1_{xv}},
\end{align*}
since $\varepsilon_0<\bar{\varepsilon}_0$ and $\bar{\varepsilon}_0$ is sufficiently small. To obtain the second to last line, we used the inequality $$\|\l v\r u\|_{L^1_v}\leq \|\l v\r^{-3^-}\|_{L^1_v}\|\l v\r^{4^+} u\|_{L^\infty_v}\lesssim\|\l v\r^{4^+} u\|_{L^\infty_v},$$
as well as \eqref{fisrst bound on Linf}.
This estimate combined with \eqref{Duhamel ineq L1 local} implies 
\begin{equation}\label{L1 bound local}
  \sup_{t\in[0;3]}\|f(t)\|_{L^1_{xv}} < 2\varepsilon_0. 
\end{equation}

In order to treat the term $\l t\r^{-3/2}\|\l v\r f(t)\|_{L^2_xL^1_v}$, we prove a slightly more general fact that will be useful later on. Let $\beta$ be such that $0 \leq \beta < \frac{M-3}{2}$. For an arbitrary measurable function $u:\R_x^3\times\R_v^3\to\R$,  we use Cauchy-Schwarz  in $v$ to write
\begin{align*}
    \Vert \langle v \rangle^{\beta} u \Vert_{L^{2}_x L^1_v} &\leq  \Vert \langle v \rangle^{-3/2^{-}} \Vert_{L^{2}_v} \Vert \langle v \rangle^{\beta + 3/2^{+}} u \Vert_{L^{2}_{xv}} 
\end{align*}
and
\begin{align}\label{L2xv}
\begin{split}
\Vert \langle v \rangle^{\beta + 3/2^{+}} u \Vert_{L^{2}_{xv}} &\lesssim  \Vert \langle v \rangle^{2 \beta+3^{+}} u  \Vert_{L^{\infty}_{xv}}^{1/2}   \Vert u \Vert_{L^{1}_{xv}}^{1/2} \leq \Vert \langle v \rangle^{M} u \Vert_{L^{\infty}_{xv}}^{1/2}   \Vert u \Vert_{L^{1}_{xv}}^{1/2}, 
   \end{split}
\end{align}
where for the last line we used the fact that $0\leq \beta<\frac{M-3}{2}$. 
Now using \eqref{fisrst bound on Linf}, \eqref{L1 bound local}, we obtain
\begin{align} \label{highweight}
    \Vert \langle v \rangle^\beta f(t) \Vert_{L^{2}_x L^1_v} \lesssim \varepsilon_0.
\end{align}
Finally, we have
\begin{align}\label{dispersive 2 local}
    \Vert \l v \r^\alpha f(t) \Vert_{L^\infty_x L^2_v} \leq \|\l v\r^{-3/2^-}\|_{L^2_v}\|\l v\r^{\alpha+3/2^+}f(t)\|_{L^\infty_{xv}}\lesssim \Vert \l v \r^{M} f(t) \Vert_{L^{\infty}_{xv}} \lesssim \varepsilon_0,
\end{align}
where we used the fact that $\alpha<M-\frac{3}{2}$ and  \eqref{fisrst bound on Linf}. 

Taking $A_0>6$ large enough, the result follows.

\end{proof}

\subsection{Global existence}

The extension of the solution to arbitrary large times relies on a bootstrap argument. The main bootstrap  proposition is the following 

\begin{proposition} \label{boot main prop} Let $\bar{\varepsilon}_0$ and $A_0>6$ be as in the statement of Theorem \ref{local existence}. Then there is $0<\bar{\varepsilon}_1<\bar{\varepsilon}_0$ with the following property:

If $f$ is a solution of \eqref{KWE} in $[0,T]$ with initial data $f_0\in B_{X_{M,\alpha}}(\varepsilon_0)$, where $0<\varepsilon_0<\bar{\varepsilon}_1$ and $T>3$, satisfying the bounds 
\begin{align} 
\label{boot1} \sup_{t \in [0;T]} \langle t \rangle^{3/2} \Vert \langle v \rangle f(t) \Vert_{L^{2}_x L^1_v} <  A_0^2\varepsilon_0, \\ 
\label{boot2} \sup_{t \in [0;T]} \langle t \rangle^{3/2} \Vert \langle v \rangle^\alpha f(t) \Vert_{L^{\infty}_x L^2_v} < A_0^2\varepsilon_0, \\ 
\label{boot3} \sup_{t \in [0;T]}  \Vert f(t) \Vert_{L^{1}_{xv}} <A_0^2\varepsilon_0, \\
\label{boot4} \sup_{t \in [0;T]} \Vert \langle v \rangle^M f(t) \Vert_{L^{\infty}_{xv}} <A_0^2\varepsilon_0,
\end{align} 
then the improved estimates
\begin{align} 
\label{boot1imp} \sup_{t \in [0;T]} \langle t \rangle^{3/2} \Vert \langle v \rangle f(t) \Vert_{L^{2}_x L^1_v} < A_0\varepsilon_0, \\ 
\label{boot2imp} \sup_{t \in [0;T]} \langle t \rangle^{3/2} \Vert \langle v \rangle^\alpha f(t) \Vert_{L^{\infty}_x L^2_v} < A_0\varepsilon_0, \\ 
\label{boot3imp} \sup_{t \in [0;T]}  \Vert f(t) \Vert_{L^{1}_{xv}} <A_0\varepsilon_0, \\
\label{boot4imp} \sup_{t \in [0;T]} \Vert \langle v \rangle^M f(t) \Vert_{L^{\infty}_{xv}} <A_0\varepsilon_0,
\end{align} 
hold.
\end{proposition}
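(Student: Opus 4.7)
The overall strategy is to apply Duhamel's formula
\[
f(t) = \mathcal{S}(t) f_0 + \int_0^t \mathcal{S}(t-s)\, \mathcal{C}[f](s)\,ds
\]
and estimate each of the four norms \eqref{boot1imp}--\eqref{boot4imp} separately. The linear piece is always bounded by $\varepsilon_0$ with the required time weight, via \eqref{conslinweight} for \eqref{boot3imp}--\eqref{boot4imp} and via the definition of $\|\cdot\|_{X_{M,\alpha}}$ for \eqref{boot1imp}--\eqref{boot2imp}. The task reduces to bounding the nonlinear Duhamel contribution by $C(A_0^2\varepsilon_0)^3$ with the same time weight; choosing $A_0>6$ large relative to absolute constants and then $\overline{\varepsilon}_1$ small relative to $A_0$ upgrades each $A_0^2\varepsilon_0$ bound to $A_0\varepsilon_0$.

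For the two non-dispersive bounds \eqref{boot3imp}--\eqref{boot4imp}, I exploit that $\mathcal{S}(t-s)$ preserves both $L^1_{xv}$ and $\langle v\rangle^M L^\infty_{xv}$ (by \eqref{conslinweight}). For \eqref{boot3imp} I apply \eqref{joint L1} pointwise in $x$ and then H\"older in $x$, distributing the two first factors into $L^2_x L^1_v$; the bootstrap \eqref{boot1} gives a source of size $(A_0^2\varepsilon_0)^3\langle s\rangle^{-3}$, which is integrable in time. For \eqref{boot4imp} I apply \eqref{joint weighted Linfty} with $l=M$ and control $\|\langle v\rangle f\|_{L^\infty_x L^1_v}$ by $\|\langle v\rangle^\alpha f\|_{L^\infty_x L^2_v}$ through Cauchy--Schwarz in $v$ (using $\alpha>5/2$), yielding a source of size $(A_0^2\varepsilon_0)^3\langle s\rangle^{-3/2}$, again integrable.

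For the dispersive bounds \eqref{boot1imp}--\eqref{boot2imp}, I handle $t\in[0,3]$ by invoking \eqref{dispersive estimate local} directly, and for $t\geq 2$ split the Duhamel integral as $\int_0^{t/2}+\int_{t/2}^{t}$. On $[0,t/2]$ I use the dispersive estimate \eqref{decaylin}: for \eqref{boot1imp} this gives $\|\mathcal{S}(t-s)g\|_{L^2_xL^1_v}\lesssim |t-s|^{-3/2}\|g\|_{L^1_xL^2_v}$, with the source estimated by \eqref{joint L2 1w} followed by H\"older in $x$; for \eqref{boot2imp} it gives $\|\mathcal{S}(t-s)g\|_{L^\infty_xL^2_v}\lesssim |t-s|^{-3/2}\|g\|_{L^2_xL^\infty_v}$, with the source estimated by \eqref{joint weighted Linfty} where the decaying $\|\langle v\rangle f\|_{L^2_xL^1_v}$ factor absorbs the $L^2_x$. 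Since $|t-s|\gtrsim t$ on this range, one collects the factor $t^{-3/2}$ directly. On $[t/2,t]$, I use that $\mathcal{S}(t-s)$ preserves every mixed norm $L^p_xL^r_v$ (immediate from the fact that it is a pure $x$-translation at each fixed $v$), so the contribution reduces to estimating $\|\langle v\rangle\mathcal{C}[f]\|_{L^2_xL^1_v}$ via \eqref{joint L1 lw} and $\|\langle v\rangle^\alpha\mathcal{C}[f]\|_{L^\infty_xL^2_v}$ via \eqref{joint L2 1w}; since $\langle s\rangle\sim\langle t\rangle$ there and two of the three factors carry $\langle s\rangle^{-3/2}$ decay, the source behaves like $\langle s\rangle^{-3}$ and integration yields a remainder $\lesssim\langle t\rangle^{-2}\lesssim\langle t\rangle^{-3/2}$.

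The main obstacle I anticipate is the second term of \eqref{joint L2 1w} at $l=\alpha$, which contains $\|\langle v\rangle^\alpha f\|_{L^\infty_xL^1_v}$: no bootstrap hypothesis directly provides time decay for this quantity. I plan to control it by interpolation between the two $\alpha$-weighted bootstrap norms: split the $v$-integral into $\{|v|\leq R\}$ and $\{|v|>R\}$, use Cauchy--Schwarz with the decaying $\|\langle v\rangle^\alpha f\|_{L^\infty_xL^2_v}$ on the low-velocity piece and the pointwise bound $|f|\lesssim\|\langle v\rangle^Mf\|_{L^\infty_{xv}}\langle v\rangle^{-M}$ on the high-velocity piece, then optimize in $R$. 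The constraints $M>8$ and $\alpha<M-3/2$ are used precisely to make the resulting partial decay rate for $\|\langle v\rangle^\alpha f\|_{L^\infty_xL^1_v}$, combined with the decay of the remaining two factors in \eqref{joint L2 1w}, sufficient to close the $[t/2,t]$ integral at the required $\langle t\rangle^{-3/2}$ rate.
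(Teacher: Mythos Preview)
Your overall plan is sound, and the non-dispersive bounds \eqref{boot3imp}--\eqref{boot4imp} are handled correctly (your treatment of \eqref{boot3imp} via two copies of \eqref{boot1} is in fact simpler than the paper's). The bulk piece $[0,t/2]$ of the dispersive bounds is also fine. The genuine gap is in the endpoint piece $[t/2,t]$: your claim that ``$\mathcal{S}(t-s)$ preserves every mixed norm $L^p_x L^r_v$'' is false. The translation-at-fixed-$v$ argument gives
\[
\Vert \mathcal{S}(\tau)g(\cdot,v)\Vert_{L^p_x}=\Vert g(\cdot,v)\Vert_{L^p_x}\quad\text{for each }v,
\]
hence conservation of $\Vert\cdot\Vert_{L^r_vL^p_x}$ (inner $x$), not of $\Vert\cdot\Vert_{L^p_xL^r_v}$ (outer $x$). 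For the latter one only has the Minkowski inequality $\Vert\mathcal{S}(\tau)g\Vert_{L^p_xL^r_v}\le\Vert g\Vert_{L^r_vL^p_x}$ when $p\ge r$, which does not close back to $\Vert g\Vert_{L^p_xL^r_v}$. Concretely, in one dimension $g(x,v)=h(x+v)$ gives $\Vert\mathcal{S}(\tau)g\Vert_{L^\infty_xL^2_v}=|1-\tau|^{-1/2}\Vert h\Vert_{L^2}$, which blows up as $\tau\to1$. So you cannot pass $\mathcal{S}(t-s)$ through the $L^2_xL^1_v$ or $L^\infty_xL^2_v$ norms on $[t/2,t]$.

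The paper's remedy is to trade velocity weight for a \emph{diagonal} Lebesgue exponent, where \eqref{conslinweight} applies. For \eqref{boot1imp} one writes, with $q>3/2$,
\[
\Vert\langle v\rangle\,\mathcal{S}(t-s)\mathcal{T}\Vert_{L^2_xL^1_v}\le\Vert\langle v\rangle^{-q}\Vert_{L^2_v}\,\Vert\langle v\rangle^{1+q}\mathcal{S}(t-s)\mathcal{T}\Vert_{L^2_{xv}}=\Vert\langle v\rangle^{-q}\Vert_{L^2_v}\,\Vert\langle v\rangle^{1+q}\mathcal{T}\Vert_{L^2_{xv}},
\]
and then estimates the last factor by \eqref{joint L2 1w} with $l=1+q$, choosing $q\le\min\{\alpha-1,\tfrac{M-5}{2}\}$ so that both terms on the right of \eqref{joint L2 1w} are controlled by the bootstrap. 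For \eqref{boot2imp} the analogous move is $\Vert\langle v\rangle^\alpha\cdot\Vert_{L^\infty_xL^2_v}\lesssim\Vert\langle v\rangle^{\alpha+3/2^+}\cdot\Vert_{L^\infty_{xv}}$, after which \eqref{conslinweight} and \eqref{joint weighted Linfty} close immediately. Note that with this route the problematic factor $\Vert\langle v\rangle^\alpha f\Vert_{L^\infty_xL^1_v}$ never appears, so your anticipated interpolation obstacle evaporates --- which is fortunate, since your splitting in $R$ would require $\alpha<M-3$ for the high-velocity integral $\int_{|v|>R}\langle v\rangle^{\alpha-M}\,dv$ to converge, whereas the standing assumption is only $\alpha<M-3/2$.
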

\begin{proof} 
 Consider $0<\bar{\varepsilon}_1<\bar{\varepsilon}_0$ sufficiently small. 
 Let $0<\varepsilon_0<\bar{\varepsilon}_1$, and consider $f_0\in B_{X_{M,\alpha}}(\varepsilon_0)$. Due to the uniqueness part of Theorem \ref{local existence}, for $0\leq t\leq 3$ the solution $f$ coincides with the one obtained in Theorem \ref{local existence}, thus it satisfies \eqref{dispersive estimate local}. Consequently \eqref{boot1imp}-\eqref{boot4imp} are automatically satisfied.

Hence, it suffices to prove \eqref{boot1imp}-\eqref{boot4imp} for $3< t\leq T$ under the assumptions \eqref{boot1}-\eqref{boot4}.
The next four subsections are dedicated to the proof of each of these improved estimates.
In the following,  given $\mathcal{T}\in \{\mathcal{L}_1,\mathcal{L}_2,\mathcal{G}_1,\mathcal{G}_2\}$, we denote
\begin{align*}
    I_{\mathcal{T}}(t):= \int_0^t 
    \mathcal{S}(t-s) \mathcal{T}[f,f,f](s) \, ds, \quad 3<t\leq T.
\end{align*}

Let us also denote $\varepsilon:=A_0^2\varepsilon_0$.
\subsubsection{Proof of \eqref{boot1imp}} \label{subsection:boot1}
By Duhamel's formula, we obtain
\begin{align} \label{Duhamel}
\begin{split}
\big \Vert \langle v \rangle f(t) \big \Vert_{L^{2}_x L^1_v} \leq \Vert \langle v \rangle \mathcal{S}(t) f_0 \Vert_{L^{2}_x L^1_v} &+ \sum_{\mathcal{T} \in \lbrace \mathcal{G}_1, \mathcal{G}_2, \mathcal{L}_1,\mathcal{L}_2 \rbrace}  \Vert \langle v \rangle I_{\mathcal{T}}(t-1) \Vert_{L^{2}_x L^1_v} \\
&+\sum_{\mathcal{T} \in \lbrace \mathcal{G}_1, \mathcal{G}_2, \mathcal{L}_1,\mathcal{L}_2 \rbrace}   \Vert \langle v \rangle \big(I_{\mathcal{T}}(t) - I_{\mathcal{T}}(t-1) \big) \Vert_{L^{2}_x L^1_v}.
\end{split}
\end{align}
Fix $\mathcal{T}\in \{\mathcal{L}_1,\mathcal{L}_2,\mathcal{G}_1,\mathcal{G}_2\}$.
Using the commutation identity \eqref{commutation identity} as well as the dispersive estimate \eqref{decaylin} we find
\begin{align*}
    \Vert \langle v \rangle I_{\mathcal{T}}(t-1) \Vert_{L^{2}_x L^1_v} & \lesssim \int_{0}^{t-1} \frac{1}{(t-s)^{3/2}} \big \Vert \langle v \rangle \mathcal{T}[f,f,f](s) \big \Vert_{L^1_x L^{2}_v} ds.
\end{align*}
Now using \eqref{joint L2 1w} and the Cauchy-Schwarz inequality in $x$,  we obtain
\begin{align*}
\notag \big \Vert \langle v \rangle \mathcal{T}[f,f,f](s) \big \Vert_{L^1_x L^{2}_v} & \lesssim \Vert \langle v \rangle f(s) \Vert_{L^{2}_{xv}} \Vert \langle v \rangle f(s) \Vert_{L^2_{x} L^{1}_v} \Vert \langle v \rangle^3 f(s) \Vert_{L^{\infty}_{xv}}  \lesssim \langle s \rangle ^{-3/2} \varepsilon^3,\quad s\in [0,t-1].
\end{align*}
where we used \eqref{L2xv} and \eqref{boot1}, \eqref{boot4}.
Therefore,
\begin{align} 
\Vert \langle v \rangle I_{\mathcal{T}}(t-1) \Vert_{L^{2}_x L^1_v} &\lesssim \int_0^1 \frac{\varepsilon^3}{(t-s)^{3/2}} ds + \int_1^{t-1} \frac{\varepsilon^3}{(t-s)^{3/2} s^{3/2}}
ds\nonumber\\
&\leq \varepsilon^3\left((t-1)^{-3/2}+t^{-3/2}\right)\lesssim \langle t \rangle ^{-3/2} \varepsilon^3<\frac{A_0\varepsilon_0}{3}\l t\r^{-3/2} ,\label{boot1dispbulk}
\end{align}
since $t>3$, $\varepsilon_0<\bar{\varepsilon}_1$, and $\bar{\varepsilon}_1$ is sufficiently small.

For the term $I_{\mathcal{T}}(t)-I_{\mathcal{T}}(t-1)$, consider $\frac{3}{2}<q \leq \min \lbrace  \alpha-1, \frac{M-5}{2} \rbrace$. We note that the interval for $q$ is non-trivial because $\alpha>5/2$ and $M>8$.  Using the Cauchy-Schwarz inequality in $v$, the commutation relation \eqref{commutation identity} and \eqref{conslinweight}, we obtain
\begin{align*}
    \Vert \langle v \rangle \big( I_{\mathcal{T}}(t)-I_{\mathcal{T}}(t-1) \big) \Vert_{L^{2}_x L^1_v} & \leq  \int_{t-1}^t \|\l v\r^{-q}\|_{L^2_v}\Vert \langle v \rangle^{1+q} \mathcal{S}(t-s) \mathcal{T}[f,f,f](s) \Vert_{L^{2}_{xv}} \, ds  \\
    & \lesssim \int_{t-1}^t \Vert \langle v \rangle^{1+q} \mathcal{T}[f,f,f](s) \Vert_{L^{2}_{xv}} \, ds.
\end{align*}

Then using \eqref{joint L2 1w}, for $s\in[t-1,t]$, we have
\begin{align*} 
    \Vert &\langle v \rangle^{1+q} \mathcal{T}[f,f,f](s) \Vert_{L^{2}_{xv}}\\
    & \lesssim \Vert \langle v \rangle^{1+q} f(s) \Vert_{L^{\infty} _x L^2_{v}} \Vert \langle v \rangle f(s) \Vert_{L^{2}_x L^1_v}  \Vert \langle v \rangle^3 f(s) \Vert_{L^{\infty}_{xv}}+\Vert f(s) \Vert_{L^{\infty} _x L^2_{v}} \Vert \langle v \rangle^{1+q} f(s) \Vert_{L^{2}_x L^1_v}  \Vert \langle v \rangle^3 f(s) \Vert_{L^{\infty}_{xv}}.
\end{align*}
Using \eqref{boot2} (since $1+q \leq \alpha$), \eqref{boot1} and \eqref{boot4}, we have
$$\Vert \langle v \rangle^{1+q} f(s) \Vert_{L^{\infty} _x L^2_{v}} \Vert \langle v \rangle f(s) \Vert_{L^{2}_x L^1_v}  \Vert \langle v \rangle^3 f(s) \Vert_{L^{\infty}_{xv}}\lesssim\l s\r^{-3}\varepsilon^3.$$
Moreover, we have
$$\Vert f \Vert_{L^{\infty} _x L^2_{v}} \Vert \langle v \rangle^{1+q} f(s) \Vert_{L^{2}_x L^1_v}  \Vert \langle v \rangle^2 f(s) \Vert_{L^{\infty}_{xv}}\lesssim\l s\r^{-3/2}\varepsilon^3,$$
where we used \eqref{boot2} and \eqref{boot4} for the first and last term respectively, while for the second 
second term, we used \eqref{highweight} (since $1+q<\frac{M-3}{2}$).

Hence, we obtain
\begin{align} \label{boot1dispend}
   \big \Vert \langle v \rangle \big( I_{\mathcal{T}}(t)-I_{\mathcal{T}}(t-1) \big) \big \Vert_{L^{2}_x L^1_v} &\lesssim\varepsilon^3\int_{t-1}^t \l s\r^{-3/2}\,ds \leq \l t-1\r^{-3/2}\varepsilon^3\lesssim \l t\r^{-3/2} \varepsilon^3<\frac{A_0\varepsilon_0}{3}\l t\r^{-3/2},
\end{align}
since $t>3$,  $\varepsilon_0<\bar{\varepsilon}_1$, and $\bar{\varepsilon}_1$ is sufficiently small.
 
 Finally, by \eqref{commutation identity}-\eqref{decaylin} we have
\begin{align}\label{dispersive estimate proof 3.26}
    \Vert \langle v \rangle \mathcal{S}(t) f_0 \Vert_{L^{2}_x L^1_v} \leq t^{-3/2} \Vert \langle v \rangle f_0 \Vert_{L^1_x L^2_v}< \l t\r^{-3/2}\varepsilon_0<\frac{A_0\varepsilon_0}{3}\l t\r^{-3/2},
\end{align}
since $t>3$ and $A_0>6$.

Now putting together \eqref{Duhamel}-\eqref{dispersive estimate proof 3.26}, we obtain
    $\big \Vert \langle v \rangle f(t) \big \Vert_{L^{2}_x L^1_v} < \l t\r^{-3/2}A_0\varepsilon_0,
$
and \eqref{boot1imp} follows.

\subsubsection{Proof of \eqref{boot2imp}} \label{subsection:boot2} 
Again, by Duhamel's formula, we obtain
\begin{align} \label{Duhamel boot 2}
\begin{split}
\big \Vert \langle v \rangle^\alpha f(t) \big \Vert_{L^{\infty}_x L^2_v} & \leq \Vert \langle v \rangle^\alpha \mathcal{S}(t) f_0 \Vert_{L^{\infty}_x L^2_v} + \sum_{\mathcal{T} \in \lbrace \mathcal{G}_1, \mathcal{G}_2, \mathcal{L}_1,\mathcal{L}_2 \rbrace}  \Vert \langle v \rangle^\alpha I_{\mathcal{T}}(t-1) \Vert_{L^{\infty}_x L^2_v} \\
&+\sum_{\mathcal{T} \in \lbrace \mathcal{G}_1, \mathcal{G}_2, \mathcal{L}_1,\mathcal{L}_2 \rbrace}   \Vert \langle v \rangle^\alpha \big(I_{\mathcal{T}}(t) - I_{\mathcal{T}}(t-1) \big) \Vert_{L^{\infty}_x L^2_v}.
\end{split}
\end{align}
Fix $\mathcal{T} \in \lbrace \mathcal{L}_1, \mathcal{L}_2, \mathcal{G}_1, \mathcal{G}_2 \rbrace$.
First, we write
\begin{align*}
    \Vert \langle v \rangle^\alpha I_{\mathcal{T}}(t-1) \Vert_{L^{\infty}_x L^2_v} & \lesssim  \int_0^{t-1} \Vert \langle v \rangle^\alpha \mathcal{S}(t-s) \mathcal{T}[f,f,f] \Vert_{L^{\infty}_{x} L^2_x} \, ds.
\end{align*}
Using again \eqref{commutation identity}-\eqref{decaylin} we find 
\begin{align*}
\big \Vert \langle v \rangle^\alpha \mathcal{S}(t-s) \mathcal{T}[f,f,f](s) \big \Vert_{L^{\infty}_x L^2_v}  & = \big \Vert  \mathcal{S}(t-s) \langle v \rangle^\alpha \mathcal{T}[f,f,f](s) \big \Vert_{L^{\infty}_x L^2_v} \\
  & \lesssim \frac{1}{(t-s)^{3/2}} \Vert \langle v \rangle^\alpha \mathcal{T}[f,f,f](s) \Vert_{L^{2}_x L^{\infty}_v}.
\end{align*}
Now, \eqref{joint weighted Linfty} followed by \eqref{boot1}, \eqref{boot4} implies
\begin{align*} 
    \Vert \langle v \rangle^\alpha \mathcal{T}[f,f,f](s) \Vert_{L^{2}_x L^{\infty}_v} \lesssim \Vert \langle v \rangle^\alpha f(s) \Vert_{L^{\infty}_{xv}} \Vert \langle v \rangle f(s) \Vert_{L^2_x L^1_v} \Vert \langle v \rangle^3 f(s) \Vert_{L^{\infty}_{xv}} \lesssim \langle s \rangle^{-3/2} \varepsilon^3,
\end{align*}
which after estimating as in \eqref{boot1dispbulk} gives
\begin{align} \label{dispboot2bulk}
 \Vert \langle v \rangle^\alpha I_{\mathcal{T}}(t-1) \Vert_{L^{\infty}_x L^2_v} \lesssim \l t\r^{-3/2} \varepsilon^3<\frac{A_0\varepsilon_0}{3}\l t\r^{-3/2},
\end{align}
since $\varepsilon_0<\bar{\varepsilon}_1$, and $\bar{\varepsilon}_1$ is sufficiently small.

To deal with the term $I_{\mathcal{T}}(t)-I_{\mathcal{T}}(t-1)$, we bound as follows
\begin{align*}
   \Vert \langle v \rangle^\alpha \big(I_{\mathcal{T}}(t)-I_{\mathcal{T}}(t-1) \big) \Vert_{L^{\infty}_x L^2_v} & \lesssim \int_{t-1}^t \big \Vert \langle v \rangle^\alpha \mathcal{S}(t-s) \mathcal{T}[f,f,f](s) \big \Vert_{L^{\infty}_x L^2_v} \, ds \\
   & \lesssim \int_{t-1}^t \big \Vert \mathcal{S}(t-s) \langle v \rangle^{\alpha + 3/2^{+}} \mathcal{T}[f,f,f](s) \big \Vert_{L^{\infty}_{xv}} \, ds \\
   & \lesssim \int_{t-1}^t  \big \Vert \langle v \rangle^{\alpha + 3/2^{+}} \mathcal{T}[f,f,f](s) \big \Vert_{L^\infty_{xv}} \, ds, 
\end{align*}
where for the last line we used \eqref{conslinweight}.
Next, by \eqref{joint weighted Linfty} we obtain
\begin{align*}
  \big \Vert \langle v \rangle^{\alpha + 3/2^{+}} \mathcal{T}[f,f,f](s) \big \Vert_{L^\infty_{xv}} &\lesssim \Vert \langle v \rangle^{\alpha + 3/2^{+}} f(s) \Vert_{L^{\infty}_{xv}} \Vert \langle v \rangle f (s)\Vert_{L^{\infty}_x L^1_v} \Vert \langle v \rangle^3 f(s) \Vert_{L^{\infty}_{xv}} \\
  &\leq \varepsilon^2 \|\l v\r^{5/2^+} f(s)\|_{L^\infty_x L^2_v}\\
  &\lesssim \l s\r^{-3/2}\varepsilon^3,
\end{align*}
where for the second line we used Cauchy-Scharz in $v$ and \eqref{boot4} (since $\alpha<M-\frac{3}{2})$, and for the last line we used \eqref{boot2} (since $\alpha>5/2)$.
Hence 
\begin{align} \label{dispboot2end}
  \Vert \langle v \rangle^\alpha \big(I_{\mathcal{T}}(t)-I_{\mathcal{T}}(t-1) \big) \Vert_{L^{\infty}_x L^2_v} \lesssim \varepsilon^3\int_{t-1}^t \l s\r^{-3/2} \,ds\leq \l t-1\r^{-3/2}\varepsilon^3\lesssim \l t\r^{-3/2}\varepsilon^3<\frac{A_0\varepsilon_0}{3}\l t\r^{-3/2},
\end{align}
since $t>3$, $\varepsilon_0<\bar{\varepsilon}_1$, and $\bar{\varepsilon}_1$ is chosen sufficiently small.

Finally, similarly as in \eqref{dispersive estimate proof 3.26} we have
\begin{align}\label{dispersive estimate proof 3.27}
    \Vert \langle v \rangle^\alpha \mathcal{S}(t) f_0 \Vert_{L^{\infty}_x L^2_v} < \frac{A_0\varepsilon_0}{3}\l t\r^{-3/2},
\end{align}
since $t>3$ and $A_0>6$.
Now, \eqref{Duhamel boot 2}-\eqref{dispboot2end} imply 
$
    \Vert \langle v \rangle^\alpha f(t) \Vert_{L^{\infty}_x L^2_v} <  \langle t \rangle ^{-3/2}A_0\varepsilon_0,
$
and \eqref{boot2imp} follows.
\subsubsection{Proof of \eqref{boot3imp}} \label{subsection:boot3}
By Duhamel's formula,  \eqref{conslinweight},  and the fact that $f_0\in B_{X_{M,\alpha}}(\varepsilon_0)$ we find that
\begin{align} \label{Duhamelboot3}
    \Vert f(t) \Vert_{L^1_{xv}} & \leq \varepsilon_0  + \sum_{\mathcal{T} \in \lbrace \mathcal{G}_1, \mathcal{G}_2, \mathcal{L}_1, \mathcal{L}_2 \rbrace} \int_0^t \Vert \mathcal{T}[f,f,f](s) \Vert_{L^1_{xv}} \, ds.
\end{align}

Next using \eqref{joint L1} and we find for $\mathcal{T} \in \lbrace \mathcal{G}_1, \mathcal{G}_2, \mathcal{L}_1,\mathcal{L}_2 \rbrace$
\begin{align} 
    \Vert \mathcal{T}[f,f,f](s) \Vert_{L^1_{xv}} & \lesssim \Vert f (s)\Vert_{L^{1}_{xv}} \Vert \langle v \rangle f(s) \Vert_{L^\infty_x L^1_v} \Vert \langle v \rangle^3 f(s)\Vert_{L^{\infty}_{xv}}\nonumber  \\
   & \lesssim \Vert f(s) \Vert_{L^{1}_{xv}} \Vert \langle v \rangle^{5/2^{+}} f(s) \Vert_{L^\infty_x L^2_v} \Vert \langle v \rangle^3 f(s)\Vert_{L^{\infty}_{xv}}\nonumber\\
   & \lesssim \langle s \rangle^{-3/2} \varepsilon^3 ,\label{mainL1}
\end{align}
where for the second to last line  we used the Cauchy-Schwarz inequality in $v$, and for the last line we used  \eqref{boot3}, \eqref{boot2} (since $\alpha>5/2)$ and \eqref{boot4}.

Integrating \eqref{mainL1}, we obtain
\begin{equation}\label{mainL1int}
 \int_0^t \Vert \mathcal{T}[f,f,f](s) \Vert_{L^1_{xv}} \,ds\lesssim \varepsilon^3< \frac{A_0\varepsilon_0}{2},  
\end{equation}
since $\varepsilon_0<\bar{\varepsilon}_1$, and $\bar{\varepsilon}_1$ is sufficiently small.
Combining \eqref{Duhamelboot3}, \eqref{mainL1int} we find
\begin{align} \label{boot3concl}
    \Vert f(t) \Vert_{L^{1}_{xv}} <\varepsilon_0+\frac{A_0\varepsilon_0}{2}<A_0\varepsilon_0,
\end{align}
since $A_0>6$. Estimate \eqref{boot3imp} follows.
\subsubsection{Proof of \eqref{boot4imp}} \label{subsection:boot4}

Finally to improve the last bootstrap assumption, we use again Duhamel's formula,  \eqref{conslinweight},  and the fact that $f_0\in B_{X_{M,\alpha}}(\varepsilon_0)$ to write
\begin{align*}
& \Vert \langle v \rangle^M f(t) \Vert_{L^{\infty}_{xv}}  \leq \Vert \langle v \rangle^M f_0 \Vert_{L^{\infty}_{xv}} + \sum_{\mathcal{T} \in \lbrace \mathcal{G}_1, \mathcal{G}_2, \mathcal{L}_1, \mathcal{L}_2 \rbrace} \int_0^t \Vert \langle v \rangle^M \mathcal{T}[f,f,f] \Vert_{L^{\infty}_{xv}} \, ds .
\end{align*}
For fixed $\mathcal{T} \in \lbrace \mathcal{L}_1, \mathcal{L}_2, \mathcal{G}_1, \mathcal{G}_2 \rbrace$, we use  \eqref{joint weighted Linfty} and the Cauchy-Schwarz inequality in $v$ to obtain, 
\begin{align} \label{mainLinfty}
    \Vert \langle v \rangle^M \mathcal{T}[f,f,f](s) \Vert_{L^{\infty}_{xv}} & \lesssim \Vert \langle v \rangle^M f\Vert_{L^{\infty}_{xv}} \Vert \langle v \rangle f (s) \Vert_{L^{\infty}_x L^1_v} \Vert \langle v \rangle^3 f \Vert_{L^{\infty}_{xv}} \\
\notag    & \lesssim \Vert \langle v \rangle^M f\Vert_{L^{\infty}_{xv}} \Vert \langle v \rangle^{5/2^+} f (s) \Vert_{L^{\infty}_x L^2_v} \Vert \langle v \rangle^3 f \Vert_{L^{\infty}_{xv}}\\
  \notag  & \lesssim \langle s \rangle ^{-3/2} \varepsilon^3,
\end{align}
and we can conclude as in \eqref{boot3concl} that $\Vert \langle v \rangle^M f(t) \Vert_{L^{\infty}_{xv}}<A_0\varepsilon_0,$
since $\varepsilon_0<\bar{\varepsilon}_1$ and $\bar{\varepsilon}_1$ is sufficiently small. Estimate \eqref{boot4imp} follows.

The proof is  complete.

\end{proof}

\subsubsection{Conclusion of the proof}

Now with  Proposition \ref{boot main prop} in hand, we move to the proof of Theorem \ref{thmdispersive}. The argument is fairly standard, nonetheless we provide some details for the convenience of the reader.

\begin{proof}[Proof of Theorem \ref{thmdispersive}]

Let $\bar{\varepsilon}_1,A_0$ be as in the statement of Proposition \ref{boot main prop}.
Setting $\bar{\varepsilon}=\frac{\bar{\varepsilon}_1}{A_0}$, consider $0<\varepsilon_0<\bar{\varepsilon}$ and $f_0\in B_{X_{M,\alpha}}(\varepsilon_0)$. We define 
\begin{align*}
    T^*&:=\sup\Big\{T>0 : \text{ \eqref{KWE} has a solution $f$ with initial data $f_0$ in $[0;T]$, which satisfies \eqref{boot1imp}-\eqref{boot4imp}}\Big\}.
\end{align*}

Since $f_0\in B_{X_{M,\alpha}}(\varepsilon_0)$ and $\varepsilon_0<\frac{\bar{\varepsilon}_1}{A_0}<\bar{\varepsilon}_0$, Theorem \ref{local existence} implies that $T^*\geq 3$. Arguing by contradiction, we will show that $T^*=+\infty$. Indeed, assume that $T^*<+\infty$, and consider the solution $f$ of \eqref{KWE} with initial data $f_0$ satisfying \eqref{boot1imp}-\eqref{boot4imp}, restricted to $[0,T^*-1]$. Then by \eqref{boot4imp} we have $\|\l v\r^M f(T^*-1)\|_{L^\infty_{xv}}<A_0\varepsilon_0<\bar{\varepsilon}_1<\bar{\varepsilon}_0$. Applying Theorem \ref{local existence} to \eqref{KWE} with initial data $f(T^*-1),$
we may extend $f$ up to $T^*+2$. Moreover, by \eqref{dispersive estimate local} combined with the fact that \eqref{boot4imp} holds up to time $T^*-1$, we obtain
\begin{equation}\label{pre bootstrap}
 \sup_{t\in [0;T^*+2]}\|\l v\r^{M}f(t)\|_{L^\infty_{xv}}<A_0^2\varepsilon_0.   
\end{equation}
By Proposition \ref{boot main prop}, bound \eqref{pre bootstrap} implies
\eqref{boot4imp} up to time $T^*+2$. Similarly we can show that \eqref{boot1imp}-\eqref{boot3imp} hold up to time $T^*+2$ as well. However, this contradicts the maximality of $T^*$, thus $T^*=+\infty$.

Hence,  \eqref{KWE} with initial data $f_0$ has a solution $f$ in $[0,+\infty)$, which satisfies the dispersive bound
 \begin{align} \label{global bounds positive}
        \sup_{t\ge 0} \big\{ \langle t \rangle^{3/2} \Vert \langle v \rangle f(t) \Vert_{L^{2}_x L^1_v} + \langle t \rangle^{3/2} \Vert \langle v \rangle^\alpha f(t) \Vert_{L^{\infty}_x L^2_v} + \Vert f(t) \Vert_{L^{1}_{xv}} + \Vert \langle v \rangle^M f(t) \Vert_{L^{\infty}_{xv}} \big\}<A\varepsilon_0,
    \end{align}
    where $A=4A_0$.
This solution is also unique by Theorem \ref{local existence}.

Let us now briefly explain how to extend the solution to  negative times. Consider the initial value problem 
\begin{equation}\label{negative equation}
\begin{cases}
 \partial_t g -v\cdot\nabla_x g=-\mathcal{C}[g], \quad t>0\\
 g(t=0)=f_0
\end{cases}
\end{equation}
Since our argument for solving \eqref{KWE} was perturbative, we may apply an identical reasoning for \eqref{negative equation} instead to obtain a unique solution $g\in \mathcal{C}([0;+\infty),\l v\r^{-M}L^\infty_{xv})$ which satisfies the dispersive bound 
$$\sup_{t\ge 0} \big\{ \langle t \rangle^{3/2} \Vert \langle v \rangle g(t) \Vert_{L^{2}_x L^1_v} + \langle t \rangle^{3/2} \Vert \langle v \rangle^\alpha g(t) \Vert_{L^{\infty}_x L^2_v} + \Vert g(t) \Vert_{L^{1}_{xv}} + \Vert \langle v \rangle^M g(t) \Vert_{L^{\infty}_{xv}} \big\}<A\varepsilon_0.$$
We can then define
$f(-t):=g(t)$ for $t>0$. Then, one can easily see that $f\in \mathcal{C}(\R,\l v\r^{-M}L^\infty_{xv})$ is the unique global solution of \eqref{KWE} with initial data $f_0$, and that it satisfies \eqref{global bounds}.

Finally, we show continuity with respect to initial data in the $L^1_{xv}$ topology. Assume $f,g$ are the global solution of \eqref{KWE} with initial data $f_0,g_0\in B_{X_{M,\alpha}}(\varepsilon_0)$ respectively.

By Duhamel's formula and \eqref{conslinweight}, for any $t\in\R$, we have 
\begin{equation}\label{differencefg}
\begin{aligned} 
    &\|f(t) - g(t)\|_{L^1_{xv}}
    \leq \| f_0 - g_0 \|_{L^1_{xv}}\\
    &+ \sum_{\mathcal{T}\in\{\mathcal{L}_1,\mathcal{L}_2,\mathcal{G}_1,\mathcal{G}_2\}}\int_{-|t|}^{|t|} \big( \|\mathcal{T}[f-g,f,f](s)\|_{L^1_{xv}}
    + \|\mathcal{T}[g,f-g,f](s)\|_{L^1_{xv}} + \|\mathcal{T}[g,g,f-g](s)\|_{L^1_{xv}} \big) \, ds.
\end{aligned}
\end{equation}
Now using \eqref{contraction estimate L1} for the nonlinear part followed by \eqref{global bounds}, we find that for some numerical constant $C>0$ 
\begin{equation}\label{continuous dependence estimate}
\begin{aligned}
   \Vert  f(t) - g(t) \Vert_{L^{1}_{xv}} & \leq \Vert  f_0 - g_0 \Vert_{L^1_{xv}} +  C \sup_{s \in \R}  \Vert f(s)-g(s) \Vert_{L^{1}_{xv}}  \\
   & \times \sum_{h_1,h_2 \in \lbrace f,g \rbrace}   \sup_{s \in \R} \Vert \langle v \rangle^3 h_1 \Vert_{L^{\infty}_{xv}} \cdot  \sup_{s \in \R} \langle s \rangle^{3/2} \Vert \langle v \rangle h_2 \Vert_{L^{\infty}_x L^{1}_{v}} \int_{-\infty}^{+\infty} \l s\r^{-3/2}\,ds \\
   &   \leq \Vert  f_0 - g_0  \Vert_{L^1_{xv}} + 12 A^2 C \varepsilon_0^2 \sup_{s \in \R} \Vert f(s) - g(s) \Vert_{L^{1}_{xv}}\\
   &\leq  \Vert  f_0 - g_0  \Vert_{L^1_{xv}} + \frac{1}{2} \sup_{s \in \R} \Vert f(s) - g(s) \Vert_{L^{1}_{xv}},
\end{aligned}
\end{equation}
where to obtain the last line we used the fact that $\varepsilon_0<\bar{\varepsilon}$ and the fact that $\bar{\varepsilon}$ is sufficiently small. Estimate \eqref{contindep} clearly follows. 

\end{proof}

\subsection{Propagation of $L^1_{xv}$ -- moments and conservation laws}
In this section we prove Corollary \ref{propagation moments} which shows that the global solution to \eqref{KWE} propagates $L^1_{xv}$ -- moments as long as they are initially small enough. Moreover, using the propagation of $L^1_{xv}$ -- moments, we show that the solution  conserves the total mass, momentum, and energy.

\begin{proof}[Proof of Corollary \ref{propagation moments}]
 We first prove the propagation of $L^1_{xv}$ -- moments \eqref{propagation of moments statement}. Consider $0<\bar{\varepsilon}_2<\bar{\varepsilon}$ sufficiently small. Let $0<\varepsilon_0<\bar{\varepsilon}_2$, $f_0\in B_{X_{M,\alpha}}(\varepsilon_0)$ and let $f$ denote the global solution of \eqref{KWE} obtained in Theorem \ref{thmdispersive}.
 
Fix $t\in \R$. By Duhamel's formula and \eqref{conslinweight} we find 
\begin{align}\label{Duhamel moments}
    \Vert \langle v \rangle^N f(t) \Vert_{L^1_{xv}} \leq \Vert \langle v \rangle^N f_0 \Vert_{L^1_{xv}} + \sum_{\mathcal{T} \in \lbrace \mathcal{L}_1, \mathcal{L}_2, \mathcal{G}_1, \mathcal{G}_2 \rbrace} \int_{-|t|}^{|t|}  \Vert \langle v \rangle^N \mathcal{T}[f,f,f](s) \Vert_{L^1_{xv}}  \, ds.
\end{align}
Using \eqref{joint L1 lw}, for $-|t|\leq s\leq |t|$ we have
\begin{align*}
    \Vert \langle v \rangle^N \mathcal{T}[f,f,f](s) \Vert_{L^1_{xv}} & \lesssim \Vert \langle v \rangle^N f \Vert_{L^1_{xv}} \Vert \langle v \rangle f(s) \Vert_{L^\infty_x L^1_v} \Vert \langle v \rangle^3 f \Vert_{L^\infty_{xv}} \\
    & \lesssim A^2\varepsilon_0^2 \langle s \rangle^{-3/2} \sup_{s \in\R} \Vert \langle v \rangle^N f(s) \Vert_{L^1_{xv}},
\end{align*}
where for the last line we used the global bounds \eqref{global bounds}. Plugging that estimate into \eqref{Duhamel moments}, we obtain for some constant $C>0$
\begin{align} \label{a priori moments}
\begin{split}
     \Vert \langle v \rangle^N f(t) \Vert_{L^1_{xv}} &\leq \varepsilon_0 + C A^2\varepsilon_0^2 \cdot  \sup_{s \in \R} \Vert \langle v \rangle^N f(s) \Vert_{L^1_{xv}}\,\int_{-\infty}^{+\infty} \l s\r^{-3/2}\,ds\\
     &<\varepsilon_0 +\frac{1}{2} \sup_{s \in \R} \Vert \langle v \rangle^N f(s) \Vert_{L^1_{xv}},
    \end{split}
\end{align}
since $0<\varepsilon_0<\bar{\varepsilon}_2$ and $\bar{\varepsilon}_2$ is sufficiently small. Since $t$ was arbitrary, the above estimate implies \eqref{propagation of moments statement}.

Now we prove the conservation laws \eqref{conservation laws}. 

First, we note that for any $\mathcal{T}\in\{\mathcal{L}_1,\mathcal{L}_2,\mathcal{G}_1,\mathcal{G}_2\}$ and $s\in\R$, \eqref{joint L1 lw}, \eqref{propagation moments} and \eqref{global bounds} imply
\begin{align*}
\|\l v\r^2\mathcal{T}[f,f,f](s)\|_{L^1_{xv}}&\lesssim\|\l v\r f(s)\|_{L^\infty_x L^1_{v}}\|\l v\r^3 f\|_{L_{xv}^\infty}\|\l v\r^2 f\|_{L^1_{xv}}\\
&\leq \|\l v\r^{-3^-}\|_{L^1_v}\|\l v\r^{4^+}f\|_{L^\infty_{xv}}\|\l v\r^3 f \|_{L^\infty_{xv}}\|\l v\r^2 f\|_{L^1_{xv}}\\
&\lesssim \varepsilon_0^3.
\end{align*}
 Therefore $\mathcal{L}[f], \mathcal{G}[f]\in L^\infty(\R, \l v\r^{-2} L^1_{xv})$, hence $\mathcal{C}[f]= \mathcal{G}[f]- \mathcal{L}[f]\in L^\infty(\R, \l v\r^{-2} L^1_{xv})$ as well.

Now fix $\phi\in\{1,v,|v|^2\}$ and $t\in  \R$. Relabeling variables, we obtain
\begin{align*}
\int_{\R^3\times\R^3}\phi\,\mathcal{G}[f]\,dv\,dx&=  \frac{1}{2}\int_{\R^{15}}\delta(\Sigma)\delta(\Omega)(\phi+\phi_1)f_1f_2f_3\,dv\,dv_1\,dv_2\,dv_3\,dx,\\
\int_{\R^3\times\R^3}\phi\,\mathcal{L}[f]\,dv\,dx&=  \frac{1}{2}\int_{\R^{15}}\delta(\Sigma)\delta(\Omega)(\phi_2+\phi_3)f_1f_2f_3\,dv\,dv_1\,dv_2\,dv_3\,dx.
\end{align*}

Hence 
\begin{equation}\label{collisional averaging}
    \int_{\R^3\times\R^3}\phi\,\mathcal{C}[f]\,dv\,dx=\frac{1}{2}\int_{\R^{15}}\delta(\Sigma)\delta(\Omega)(\phi+\phi_1-\phi_2-\phi_3)f_1f_2f_3\,dv\,dv_1\,dv_2\,dv_3\,dx=0,
\end{equation}
since the integrand vanishes due to the resonant conditions. 

Now, by Duhamel's formula, Fubini's theorem and \eqref{collisional averaging}  we obtain
\begin{align}
\int_{\R^{6}}\phi(v)f(t,x,v)\,dv\,dx
&=\int_{\R^{6}}\phi(v)f_0(x-tv,v)\,dv\,dx+\int_0^t\int_{\R^3}\int_{\R^3} \phi(v)\mathcal{C}[f](s,x+(s-t)v,v)\,dv\,dx\,ds\nonumber\\
&=\int_{\R^{6}}\phi(v)f_0(x,v)\,dv\,dx+\int_0^t\int_{\R^3}\int_{\R^3} \phi(v)\mathcal{C}[f](s,x,v)\,dv\,dx\,ds\nonumber\\
&=\int_{\R^{6}}\phi(v)f_0(x,v)\,dv\,dx\nonumber,
\end{align}
and the result is proved.

\end{proof}

\section{Scattering} \label{section:scattering}

In this section we study the asymptotic behavior of solutions constructed in Section \ref{section:GWP}. We prove that they scatter, meaning that they behave like free transport as time reaches infinity. We then refine our understanding of the scattering states by studying properties of wave operators, thereby proving an asymptotic completeness result for \eqref{KWE}. The upshot of our analysis is that the scattering state characterizes the evolution fully, and moreover that any reasonably fast decaying distribution near vacuum can be achieved as a scattering state of \eqref{KWE}.

The results are mostly obtained as a byproduct of the dispersive analysis carried out in Section \ref{section:GWP}. As a result many of the arguments are similar, therefore we will omit some details in the proofs.

\subsection{Scattering states}
We start by constructing the scattering states. 
Recall the definition of the set $B_{X_{M,\alpha}}(\varepsilon_0)$ given in \eqref{X ball}. 

\begin{proposition} \label{scatteringstates}
Let $f_0 \in B_{X_{M,\alpha}}(\varepsilon_0),$ where $0<\varepsilon_0<\overline{\varepsilon}$ (given in Theorem \ref{thmdispersive}).

Then there exist two unique functions $f_{-\infty}, f_{+\infty} \in L^1_{xv} \cap \langle v \rangle^{-M} L^{\infty}_{xv}$ such that if $f(t)$ denotes the global solution to \eqref{KWE} with initial data $f_0$, then
\begin{align} \label{scatt+infty}
\begin{split}
 &   \Vert \langle v \rangle^M \big( f(t) - \mathcal{S}(t) f_{+\infty} \big) \Vert_{L^\infty_{xv}} +  \Vert f(t) - \mathcal{S}(t) f_{+\infty} \Vert_{L^1_{xv}}\longrightarrow_{t \to \infty} 0 \\
\end{split}
\end{align}
and 
\begin{align} \label{scatt-infty}
\begin{split}
& \Vert \langle v \rangle^M \big( f(t) - \mathcal{S}(t) f_{-\infty} \big) \Vert_{L^\infty_{xv}} +  \Vert f(t) - \mathcal{S}(t) f_{-\infty} \Vert_{L^1_{xv}} \longrightarrow_{t \to -\infty} 0. \\
\end{split}
\end{align}
\end{proposition}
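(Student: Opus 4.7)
My plan is to construct $f_{\pm\infty}$ as explicit limits of the backwards-propagated solution and then show that convergence to free transport follows from the integrability in time of the collision operator.

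The natural definition is to set, formally,
\begin{align*}
f_{+\infty} := f_0 + \int_0^{+\infty} \mathcal{S}(-s)\,\mathcal{C}[f](s)\,ds, \qquad f_{-\infty} := f_0 - \int_{-\infty}^{0} \mathcal{S}(-s)\,\mathcal{C}[f](s)\,ds.
\end{align*}
The first step is to show that these integrals converge absolutely in the Banach space $L^1_{xv} \cap \langle v\rangle^{-M} L^\infty_{xv}$. Since the transport semigroup is an isometry on both $L^1_{xv}$ and $\langle v\rangle^{-M}L^\infty_{xv}$ by \eqref{conslinweight}, it suffices to control $\|\mathcal{C}[f](s)\|_{L^1_{xv}}$ and $\|\langle v\rangle^M \mathcal{C}[f](s)\|_{L^\infty_{xv}}$. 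These are exactly the quantities bounded in Sections \ref{subsection:boot3} and \ref{subsection:boot4}: indeed \eqref{mainL1} and \eqref{mainLinfty}, together with the global estimate \eqref{global bounds} from Theorem \ref{thmdispersive}, give
\begin{align*}
\|\mathcal{T}[f,f,f](s)\|_{L^1_{xv}} + \|\langle v\rangle^M \mathcal{T}[f,f,f](s)\|_{L^\infty_{xv}} \lesssim \langle s\rangle^{-3/2} (A\varepsilon_0)^3,
\end{align*}
for each $\mathcal{T}\in\{\mathcal{L}_1,\mathcal{L}_2,\mathcal{G}_1,\mathcal{G}_2\}$. Since $\langle s \rangle^{-3/2}$ is integrable on $\mathbb{R}$, the defining integrals of $f_{\pm\infty}$ converge absolutely in the relevant norms, so $f_{\pm\infty}\in L^1_{xv}\cap \langle v\rangle^{-M}L^\infty_{xv}$.

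Next I would establish \eqref{scatt+infty} and \eqref{scatt-infty}. By the Duhamel formula \eqref{Duhamel formula} and the definition of $f_{+\infty}$,
\begin{align*}
f(t) - \mathcal{S}(t)f_{+\infty} = -\int_t^{+\infty} \mathcal{S}(t-s)\,\mathcal{C}[f](s)\,ds.
\end{align*}
Applying once more the isometry property \eqref{conslinweight} under $\mathcal{S}(t-s)$ and the same bounds on $\mathcal{C}[f]$ used above, we get
\begin{align*}
\|\langle v\rangle^M (f(t) - \mathcal{S}(t)f_{+\infty})\|_{L^\infty_{xv}} + \|f(t) - \mathcal{S}(t)f_{+\infty}\|_{L^1_{xv}} \lesssim \varepsilon_0^3 \int_t^{+\infty} \langle s\rangle^{-3/2}\,ds \lesssim \varepsilon_0^3 \langle t\rangle^{-1/2},
\end{align*}
which vanishes as $t\to+\infty$. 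The argument for $f_{-\infty}$ and $t\to -\infty$ is identical after a sign change. As a bonus, this gives an explicit rate of convergence $\langle t\rangle^{-1/2}$.

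Finally, uniqueness follows straightforwardly: if $g_{+\infty}$ is another scattering state satisfying \eqref{scatt+infty}, then by the triangle inequality
\begin{align*}
\|\mathcal{S}(t)(f_{+\infty} - g_{+\infty})\|_{L^1_{xv}} + \|\langle v\rangle^M \mathcal{S}(t)(f_{+\infty} - g_{+\infty})\|_{L^\infty_{xv}} \longrightarrow_{t\to +\infty} 0,
\end{align*}
but by \eqref{conslinweight} the left-hand side equals $\|f_{+\infty}-g_{+\infty}\|_{L^1_{xv}} + \|\langle v\rangle^M(f_{+\infty}-g_{+\infty})\|_{L^\infty_{xv}}$, which is independent of $t$, forcing $f_{+\infty}=g_{+\infty}$. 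I do not anticipate any significant obstacle in this argument; the main work has already been done in the dispersive analysis of Section \ref{section:GWP}, and scattering is essentially a corollary of the time-integrable decay of the collision operator along the solution.
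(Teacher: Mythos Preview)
Your proposal is correct and follows essentially the same approach as the paper: define $f_{\pm\infty}$ via the Duhamel tail, use the bounds \eqref{mainL1}, \eqref{mainLinfty} together with \eqref{global bounds} and the isometry \eqref{conslinweight} to get $\langle s\rangle^{-3/2}$ decay of the collision term, and integrate. Your uniqueness argument and the explicit $\langle t\rangle^{-1/2}$ rate also match the paper's proof and accompanying remark.
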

\begin{remark}
As a byproduct of the proof we obtain the decay rate of $t^{-1/2}$ for the convergence in \eqref{scatt+infty} and \eqref{scatt-infty}. 
\end{remark}
\begin{proof}
Notice that as a direct consequence of \eqref{mainL1} and \eqref{global bounds},
for any $s>0$, we have
\begin{align} 
\|\mathcal{S}(-s) \mathcal{C}[f](s)\|_{L^1_{xv}}&\lesssim \l s\r^{-3/2} \sup_{s \in \mathbb{R}}\left\{ \Vert f \Vert_{L^1_{xv}} \cdot \Vert \l v \r^M f \Vert_{L^\infty_{xv}} \cdot
\langle s \rangle^{3/2} \Vert \l v \r^\alpha f(s) \Vert_{L^{\infty}_x L^2_v} \right\} \nonumber\\
&\leq A^3\varepsilon_0^3\l s\r^{-3/2} \label{pointwise bound scattering},
\end{align}
hence, for any $t>0$, we have
\begin{align} \label{integral valid in L1}
\begin{split}
   \int_{0}^{t} \|\mathcal{S}(-s) \mathcal{C}[f](s)\|_{L^1_{x,v}} \, ds  \lesssim A^3\varepsilon_0^3\int_{0}^{\infty} \l s \r^{-3/2}\,ds <\infty,
\end{split}
\end{align}
thus $\int_0^t\mathcal{S}(-s)\mathcal{C}[f](s)\,ds$ is well-defined in $L^1_{xv}$ for all $t>0$.

Now, considering $1<t_1<t_2$, and integrating \eqref{pointwise bound scattering}, we have
\begin{align} \label{decayscattering}
\begin{split}
   \int_{t_1}^{t_2} \|\mathcal{S}(-s) \mathcal{C}[f](s)\|_{L^1_{xv}} \, ds  \lesssim A^3\varepsilon_0^3 \int_{t_1}^{t_2} \frac{ds}{\l s \r^{3/2}} \leq A^3\varepsilon_0^3 \,t_1^{-1/2} \longrightarrow_{t_1, t_2 \to + \infty} 0 .
\end{split}
\end{align}
Therefore $\big( \int_{0}^{t} \mathcal{S}(-s) \mathcal{C}[f] \, ds \big)_t$ is Cauchy in $L^{1}_{xv}$, hence its limit as $t \to +\infty$ belongs to $L^1_{xv}$. Let us define
\begin{align} \label{definition scattering state +infty}
f_{+\infty} := f_0 + \int_{0}^{\infty} \mathcal{S}(-s) \mathcal{C}[f](s) \, ds\in L^1_{xv}.
\end{align}
Then using Duhamel's formula, \eqref{conslinweight}, and \eqref{pointwise bound scattering}, we obtain 
\begin{align*}
\|f(t)-\mathcal{S}(t)f_{+\infty}\|_{L^1_{xv}}&\leq \int_t^\infty\|\mathcal{S}(t-s)\mathcal{C}[f](s)\|_{L^1_{xv}}\,ds
\lesssim A^3\varepsilon_0^3\, t^{-1/2}\longrightarrow_{t \to + \infty} 0.
\end{align*}

Similarly, using \eqref{mainLinfty} instead of \eqref{mainL1}, we obtain that  $f_{+\infty}\in\langle v \rangle^{-M} L^{\infty}_{xv}$, and that $\Vert \langle v \rangle^M \big( f(t) - \mathcal{S}(t) f_{+\infty} \big) \Vert_{L^\infty_{xv}}\longrightarrow_{t\to+\infty} 0$.

Uniqueness of the scattering state $f_{+\infty}$ follows by uniqueness of the limit in either $L^1_{xv}$ or $\l v\r^{-M}L^\infty_{xv}$.

In the exact same way we can show that there exists a unique scattering state as $t\to -\infty$  given by 
\begin{align} \label{definition scattering state -infty}
    f_{-\infty} := f_0 - \int_{-\infty}^{0} \mathcal{S}(-s) \mathcal{C}[f] \, ds
\end{align}
that satisfies \eqref{scatt-infty}.
\end{proof}

As a consequence of Proposition \ref{scatteringstates} (in particular the uniqueness of the scattering states), for $0<\bar{\varepsilon}_0<\bar{\varepsilon}$, we can define the mappings
\begin{align} \label{defU+}
    U_{+} : \begin{cases} 
 B_{X_{M,\alpha}}(\varepsilon_0) & \longrightarrow U_{+} \big( B_{X_{M,\alpha}}(\varepsilon_0) \big) \\
 f_0 & \longmapsto f_{+\infty}
    \end{cases}
\end{align}
and 
\begin{align} \label{defU-}
    U_{-} : \begin{cases} 
 B_{X_{M,\alpha}}(\varepsilon_0) & \longrightarrow U_{-} \big( B_{X_{M,\alpha}}(\varepsilon_0) \big) \\
 f_0 & \longmapsto f_{-\infty}
    \end{cases} .
\end{align}

\subsection{Continuity of $U_{+}, U_{-}$} Here, we prove that the scattering maps are continuous.

\begin{proposition} \label{scatteringcontinuity}
Let $0<\varepsilon_0<\bar{\varepsilon}$, where $\bar{\varepsilon}$ is given in  the statement of Theorem \ref{thmdispersive}.   Then the operators
\begin{align*} 
    U_{\pm} : \begin{cases} 
\big( B_{X_{M,\alpha}}(\varepsilon_0), \Vert \cdot \Vert_{L^1_{xv}} \big) & \longrightarrow \big( U_{\pm } \big(B_{X_{M,\alpha}}(\varepsilon_0) \big), \Vert \cdot \Vert_{L^1_{xv}} \big) \\
 f_0 & \longmapsto f_{\pm \infty}
    \end{cases}
\end{align*}
are Lipschitz continuous. More precisely,
\begin{align*}
    \Vert U_{\pm} f_0 - U_{\pm} g_0 \Vert_{L^1_{xv}} \leq 2 \Vert f_0 - g_0 \Vert_{L^1_{xv}},\quad\forall\, f_0,g_0\in B_{X_{M,\alpha}}(\varepsilon_0).
\end{align*}
\end{proposition}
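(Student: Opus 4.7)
The idea is to proceed exactly as in the proof of the continuous dependence estimate \eqref{contindep}, but applied to the explicit representation of the scattering state given in \eqref{definition scattering state +infty}--\eqref{definition scattering state -infty}. Let $f,g$ denote the global solutions corresponding to $f_0, g_0 \in B_{X_{M,\alpha}}(\varepsilon_0)$. From the construction of the scattering state we write
\begin{align*}
U_{+} f_0 - U_{+} g_0 = (f_0 - g_0) + \int_{0}^{\infty} \mathcal{S}(-s)\bigl(\mathcal{C}[f](s) - \mathcal{C}[g](s)\bigr)\,ds,
\end{align*}
and similarly for $U_-$ with the integral running over $(-\infty, 0]$. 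Since $\mathcal{S}(-s)$ is an isometry on $L^1_{xv}$ by \eqref{conslinweight}, we obtain
\begin{align*}
\|U_\pm f_0 - U_\pm g_0\|_{L^1_{xv}} \leq \|f_0 - g_0\|_{L^1_{xv}} + \int_{-\infty}^{+\infty} \|\mathcal{C}[f](s) - \mathcal{C}[g](s)\|_{L^1_{xv}}\,ds.
\end{align*}

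Next I would expand the collision operator difference using the multilinear form \eqref{collisional operator multilinear} and the telescoping identity $\mathcal{T}[f,f,f] - \mathcal{T}[g,g,g] = \mathcal{T}[f-g,f,f] + \mathcal{T}[g,f-g,f] + \mathcal{T}[g,g,f-g]$ for each $\mathcal{T} \in \{\mathcal{L}_1, \mathcal{L}_2, \mathcal{G}_1, \mathcal{G}_2\}$. Applying the contraction estimate \eqref{contraction estimate L1} pointwise in $x$ and integrating (as was done in \eqref{continuous dependence estimate}), the integrand is controlled by
\begin{align*}
\|\mathcal{C}[f](s) - \mathcal{C}[g](s)\|_{L^1_{xv}} \lesssim \|f(s) - g(s)\|_{L^1_{xv}} \sum_{h_1,h_2 \in \{f,g\}} \|\langle v\rangle^3 h_1(s)\|_{L^{\infty}_{xv}} \cdot \|\langle v\rangle h_2(s)\|_{L^{\infty}_x L^1_v}.
\end{align*}
Invoking the global dispersive bounds \eqref{global bounds} from Theorem \ref{thmdispersive} (with the $\langle v \rangle h_2$ factor supplying the decay $\langle s\rangle^{-3/2}$, and the amplitude factor $\langle v\rangle^3 h_1$ uniformly bounded by $\varepsilon_0$), the time integral converges and yields
\begin{align*}
\|U_\pm f_0 - U_\pm g_0\|_{L^1_{xv}} \leq \|f_0 - g_0\|_{L^1_{xv}} + C A^2 \varepsilon_0^2 \sup_{s \in \R}\|f(s) - g(s)\|_{L^1_{xv}}.
\end{align*}

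To close the argument I would invoke the continuous dependence estimate \eqref{contindep} from Theorem \ref{thmdispersive}, which gives $\sup_{s \in \R}\|f(s) - g(s)\|_{L^1_{xv}} \leq 2\|f_0 - g_0\|_{L^1_{xv}}$. Shrinking $\bar\varepsilon$ if necessary so that $2 C A^2 \varepsilon_0^2 \leq 1$, we obtain the claimed bound $\|U_\pm f_0 - U_\pm g_0\|_{L^1_{xv}} \leq 2\|f_0 - g_0\|_{L^1_{xv}}$. There is no real obstacle here, as the estimate is essentially a repetition of the continuity argument already carried out in \eqref{continuous dependence estimate}; the only subtlety is verifying that the nonlinear time integral on $(0, +\infty)$ (or $(-\infty, 0]$) is absolutely convergent, which follows directly from the $\langle s\rangle^{-3/2}$ decay established in Section \ref{section:GWP}.
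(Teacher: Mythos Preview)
Your proof is correct, but it takes a different route from the paper. The paper argues more indirectly: for any $t>0$ it writes
\[
\Vert U_+ f_0 - U_+ g_0 \Vert_{L^1_{xv}} = \Vert \mathcal{S}(t)(U_+ f_0 - U_+ g_0) \Vert_{L^1_{xv}} \leq \Vert f(t)-g(t)\Vert_{L^1_{xv}} + \Vert f(t)-\mathcal{S}(t)U_+ f_0\Vert_{L^1_{xv}} + \Vert g(t)-\mathcal{S}(t)U_+ g_0\Vert_{L^1_{xv}},
\]
bounds the first term by $2\Vert f_0-g_0\Vert_{L^1_{xv}}$ via \eqref{contindep}, and sends $t\to+\infty$ so that the last two terms vanish by \eqref{scatt+infty}. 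This is slightly slicker because it avoids repeating the trilinear contraction computation and requires no additional smallness. Your approach, estimating the Duhamel integral $\int_0^\infty \mathcal{S}(-s)(\mathcal{C}[f]-\mathcal{C}[g])\,ds$ directly, is equally valid---in fact it is exactly the computation the paper carries out later for injectivity (Proposition~\ref{scatteringinjectivity}). One small point: you do not actually need to shrink $\bar\varepsilon$; the constant arising in your nonlinear integral is the same one already forced to be at most $1/2$ in the proof of \eqref{contindep}, so combining with $\sup_s\Vert f(s)-g(s)\Vert_{L^1_{xv}}\leq 2\Vert f_0-g_0\Vert_{L^1_{xv}}$ gives the factor $2$ on the nose.
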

\begin{proof}
We carry out the proof for $U_{+},$ the other case being similar.

Let $0<\varepsilon_0<\bar{\varepsilon}$, $f_0,g_0\in B_{X_{M,\alpha}}(\varepsilon_0)$ and  $f,g$ be the solutions constructed using Theorem \ref{thmdispersive} with initial data $f_0$ and $g_0$ respectively. 


This implies that for any $t>0$
\begin{align*}
 \Vert U_{+} f_0 - U_{+} g_0 \Vert_{L^1_{xv}} & =   \Vert \mathcal{S}(t) U_{+} f_0 - \mathcal{S}(t) U_{+} g_0 \Vert_{L^1_{xv}}   \\
 & \leq \Vert f(t) - g(t) \Vert_{L^1_{xv}} + \Vert f(t) - \mathcal{S}(t) U_{+} f_0 \Vert_{L^1_{xv}} + \Vert g(t) - \mathcal{S}(t) U_{+} g_0 \Vert_{L^1_{xv}} \\
 & \leq 2 \Vert  f_0 - g_0 \Vert_{L^1_{xv}} +  \Vert f(t) - \mathcal{S}(t) U_{+} f_0 \Vert_{L^1_{xv}} + \Vert g(t) - \mathcal{S}(t) U_{+} g_0 \Vert_{L^1_{xv}},
\end{align*}
where to obtain the last line we used \eqref{contindep}. Letting $t \to +\infty$ and using \eqref{scatt+infty}, the result follows.

\end{proof}


\subsection{Injectivity of $U_{+},U_{-}$}
Now,   we prove that the scattering maps are injective, i.e. different initial data scatter to different states.

\begin{proposition} \label{scatteringinjectivity}
    Let $\bar{\varepsilon},A$ be as in the statement of Theorem \ref{thmdispersive}. Then, there exists $0<\bar{\varepsilon}_3<\bar{\varepsilon}$  such that for $0<\varepsilon_0<\bar{\varepsilon}_3$, the operators $U_{\pm} : B_{X_{M,\alpha}}(\varepsilon_0) \rightarrow U_{\pm}\big( B_{X_{M,\alpha}}(\varepsilon_0) \big)$ are injective.
\end{proposition}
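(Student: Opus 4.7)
The plan is to argue by contradiction, or more directly by a smallness argument, using the Duhamel representation \eqref{definition scattering state +infty} of the scattering state together with the contraction estimate \eqref{contraction estimate L1} and the continuity-in-initial-data bound \eqref{contindep}. I will write out the argument for $U_+$; the case of $U_-$ is identical modulo obvious sign changes.

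Let $f_0, g_0 \in B_{X_{M,\alpha}}(\varepsilon_0)$ and let $f,g$ be the corresponding global dispersive solutions given by Theorem \ref{thmdispersive}. From the explicit formula \eqref{definition scattering state +infty} I can write
\begin{align*}
U_+ f_0 - U_+ g_0 = (f_0 - g_0) + \int_0^{+\infty} \mathcal{S}(-s) \bigl( \mathcal{C}[f] - \mathcal{C}[g] \bigr)(s) \, ds ,
\end{align*}
so that
\begin{align*}
\Vert f_0 - g_0 \Vert_{L^1_{xv}} \leq \Vert U_+ f_0 - U_+ g_0 \Vert_{L^1_{xv}} + \int_0^{+\infty} \Vert \bigl( \mathcal{C}[f] - \mathcal{C}[g] \bigr)(s) \Vert_{L^1_{xv}} \, ds ,
\end{align*}
where I used \eqref{conslinweight}. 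If $U_+ f_0 = U_+ g_0$, the first term on the right vanishes and it remains to show that the integral is strictly smaller than $\Vert f_0 - g_0 \Vert_{L^1_{xv}}$, yielding a contradiction unless $f_0 = g_0$.

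To bound the integral, I expand $\mathcal{C}[f] - \mathcal{C}[g]$ using trilinearity of each $\mathcal{T} \in \lbrace \mathcal{L}_1, \mathcal{L}_2, \mathcal{G}_1, \mathcal{G}_2 \rbrace$ into a sum of terms of the form $\mathcal{T}[f-g,f,f] + \mathcal{T}[g,f-g,f] + \mathcal{T}[g,g,f-g]$. Applying the contraction estimate \eqref{contraction estimate L1} and then the global dispersive bounds \eqref{global bounds} for $f$ and $g$ yields, for some numerical constant $C>0$,
\begin{align*}
\Vert \bigl( \mathcal{C}[f] - \mathcal{C}[g] \bigr)(s) \Vert_{L^1_{xv}} \leq C A^2 \varepsilon_0^2 \, \langle s \rangle^{-3/2} \Vert f(s) - g(s) \Vert_{L^1_{xv}} ,
\end{align*}
exactly as in the derivation of \eqref{continuous dependence estimate}. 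Invoking the continuity estimate \eqref{contindep} gives $\Vert f(s) - g(s) \Vert_{L^1_{xv}} \leq 2 \Vert f_0 - g_0 \Vert_{L^1_{xv}}$, and integrating $\langle s \rangle^{-3/2}$ over $[0,+\infty)$ produces a finite constant. Altogether,
\begin{align*}
\int_0^{+\infty} \Vert \bigl( \mathcal{C}[f] - \mathcal{C}[g] \bigr)(s) \Vert_{L^1_{xv}} \, ds \leq C' A^2 \varepsilon_0^2 \, \Vert f_0 - g_0 \Vert_{L^1_{xv}}.
\end{align*}

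Choosing $\bar{\varepsilon}_3 < \bar{\varepsilon}$ small enough that $C' A^2 \bar{\varepsilon}_3^2 < \tfrac{1}{2}$, the assumption $U_+ f_0 = U_+ g_0$ forces $\Vert f_0 - g_0 \Vert_{L^1_{xv}} \leq \tfrac{1}{2} \Vert f_0 - g_0 \Vert_{L^1_{xv}}$, hence $f_0 = g_0$ in $L^1_{xv}$, and therefore as elements of $X_{M,\alpha}$. I do not anticipate any substantial obstacle here: the argument is a direct rerun of the continuous-dependence computation \eqref{continuous dependence estimate}, and the only point requiring care is ensuring the smallness threshold $\bar{\varepsilon}_3$ is compatible with $\bar{\varepsilon}$ from Theorem \ref{thmdispersive} so that the global bounds \eqref{global bounds} and the Lipschitz bound \eqref{contindep} are both available throughout.
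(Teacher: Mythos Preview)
Your proof is correct and follows essentially the same approach as the paper: both arguments start from the Duhamel representation \eqref{definition scattering state +infty}, expand $\mathcal{C}[f]-\mathcal{C}[g]$ by trilinearity, apply the contraction estimate \eqref{contraction estimate L1} together with the global bounds \eqref{global bounds} and the Lipschitz bound \eqref{contindep}, and conclude by smallness of $\varepsilon_0$. The only cosmetic difference is that you keep the term $\Vert U_+ f_0 - U_+ g_0 \Vert_{L^1_{xv}}$ explicitly before specializing to $U_+ f_0 = U_+ g_0$, whereas the paper assumes equality from the outset.
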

\begin{proof} 
 Consider $0<\bar{\varepsilon}_3<\bar{\varepsilon}$ sufficiently small, and $0<\varepsilon_0<\bar{\varepsilon}_3$. 
Let $f_0,g_0 \in B_{X_{M}}(\varepsilon_0)$ be such that $U_{+} f_0 = U_{+} g_0.$ Then by definition of $U_+$ \eqref{definition scattering state +infty}, we have
$$\vert f_0-g_0 \vert = \bigg \vert \int_0^t\mathcal{S}(-s)(\mathcal{C}[g](s)-\mathcal{C}[f](s))\,ds \bigg \vert.$$
Hence, using \eqref{conslinweight} followed by \eqref{contraction estimate L1}, \eqref{global bounds} and \eqref{contindep}  we obtain
\begin{align*}
&\|f_0-g_0\|_{L^1_{xv}}\leq\int_0^t\|\mathcal{C}[f](s)-\mathcal{C}[g](s)\|_{L^1_{xv}}\,ds\\
&\leq  \sum_{\mathcal{T}\in\{\mathcal{L}_1,\mathcal{L}_2,\mathcal{G}_1,\mathcal{G}_2\}}\int_{0}^{t} \big( \|\mathcal{T}[f-g,f,f](s)\|_{L^1_{xv}} + \|\mathcal{T}[g,f-g,f](s)\|_{L^1_{xv}} + \|\mathcal{T}[g,g,f-g](s)\|_{L^1_{xv}} \big) \, ds\\
&\lesssim  \sup_{s \in [0;t]}  \Vert f(s)-g(s) \Vert_{L^{1}_{xv}} \sum_{h_1,h_2 \in \lbrace f,g \rbrace}   \sup_{s \in \R} \Vert \langle v \rangle^3 h_1 \Vert_{L^{\infty}_{xv}} \cdot  \sup_{s \in \R} \langle s \rangle^{3/2} \Vert \langle v \rangle h_2 \Vert_{L^{\infty}_x L^{1}_{v}} \int_{0}^{+\infty} \l s\r^{-3/2}\,ds\\
&\lesssim 2A^2\varepsilon_0^2 \|f_0-g_0\|_{L^1_{xv}}<\frac{1}{2} \|f_0-g_0\|_{L^1_{xv}},
\end{align*}
since $\varepsilon_0<\bar{\varepsilon}_3$, and $\bar{\varepsilon}_3$ is sufficiently small. We conclude that $f_0=g_0$, thus $U_+$ is injective. 

The reasoning for the $U_{-}$ is similar, thus we omit the details.



\end{proof}

\subsection{Surjectivity of $U_{+}, U_{-}$} We finally prove that the scattering maps are surjective. In other words any appropriately decaying function can be viewed as a scattering state.
\begin{proposition} \label{scatteringsurjectivity}
Let $\bar{\varepsilon}$ be as in Theorem \ref{thmdispersive}. Then, there exists $0<\overline{\varepsilon}_4<\frac{\bar{\varepsilon}}{2}$ such that for any $0<\varepsilon_0<\bar{\varepsilon}_4$ and any $f_{+\infty} \in B_{X_{M,\alpha}}(\varepsilon_0)$,  there exists unique $f_0 \in B_{X_{M,\alpha}}(2\varepsilon_0)$  such that the global solution $f$ to \eqref{KWE} with initial data $f_0$ satisfies 
\begin{align}\label{surjective convergence}
    \Vert f(t) - \mathcal{S}(t) f_{+\infty}\Vert_{L^1_{xv}} +  \big \Vert \l v \r^M \big( f(t) - \mathcal{S}(t) f_{+\infty} \big) \big \Vert_{L^\infty_{xv}} \longrightarrow_{t \to  + \infty} 0.
\end{align}
\end{proposition}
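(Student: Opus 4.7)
The plan is to solve the final-state problem
\begin{equation*}
f(t) = \mathcal{S}(t) f_{+\infty} - \int_t^{+\infty} \mathcal{S}(t-s)\, \mathcal{C}[f](s)\, ds, \qquad t\ge 0,
\end{equation*}
by a Banach fixed point argument and then define $f_0:=f(0)$. I would work in the complete metric space $Y$ of functions $f\in \mathcal{C}([0,+\infty), \l v\r^{-M}L^\infty_{xv})$ for which
\begin{equation*}
\|f\|_Y := \sup_{t\ge 0}\Bigl\{ \|\l v\r^M f(t)\|_{L^\infty_{xv}} + \|f(t)\|_{L^1_{xv}} + \l t\r^{3/2}\|\l v\r^\alpha f(t)\|_{L^\infty_x L^2_v} + \l t\r^{3/2}\|\l v\r f(t)\|_{L^2_x L^1_v}\Bigr\}
\end{equation*}
is finite, and define $\Psi$ as the right-hand side of the final-state equation. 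The heart of the argument is to show that $\Psi$ is a contraction of the closed ball $B_Y(2\varepsilon_0)$ into itself for $\varepsilon_0$ sufficiently small. The linear term $\mathcal{S}(t)f_{+\infty}$ is bounded by $\varepsilon_0$ in each component of $\|\cdot\|_Y$ by the definition \eqref{defnormXM} of $\|\cdot\|_{X_{M,\alpha}}$. The nonlinear Duhamel term is $O(\varepsilon_0^3)$ by repeating the estimates of Subsections \ref{subsection:boot1}--\ref{subsection:boot4} verbatim with $\int_0^t$ replaced by $\int_t^{+\infty}$; the tail integrals $\int_t^{+\infty}\l s\r^{-3/2}\,ds$ still give the needed $\l t\r^{-3/2}$ decay (as in \eqref{decayscattering}). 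Contraction then follows from Lemma \ref{contractscatt} in the same manner as \eqref{continuous dependence estimate}. This determines a threshold $\bar\varepsilon_4$.

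Once the fixed point $f\in B_Y(2\varepsilon_0)$ is obtained, the bounds $\|\l v\r^M f(0)\|_{L^\infty_{xv}}\le 2\varepsilon_0$ and $\|f(0)\|_{L^1_{xv}}\le 2\varepsilon_0$ are immediate. To upgrade this to $f_0\in B_{X_{M,\alpha}}(2\varepsilon_0)$ I would use the identity
\begin{equation*}
\mathcal{S}(t)f_0 = \mathcal{S}(t)f_{+\infty}- \int_0^{+\infty}\mathcal{S}(t-s)\mathcal{C}[f](s)\,ds,
\end{equation*}
obtained by evaluating the fixed-point equation at $t=0$ and applying $\mathcal{S}(t)$. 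For $t\ge 0$ the dispersive norms in $\|\cdot\|_{X_{M,\alpha}}$ are controlled exactly as in the bootstrap. For $t<0$, the factor $|t-s|=s+|t|$ for $s\ge 0$; the same trilinear bounds, combined with the elementary estimate $\int_0^{+\infty}(s+|t|)^{-3/2}\l s\r^{-3/2}\,ds\lesssim \l t\r^{-3/2}$ (treating $s$ near $0$ via Cauchy--Schwarz in $v$, as in the step leading to \eqref{dispboot2end}), yield the dispersive decay with the correct rate. Altogether this gives $\|f_0\|_{X_{M,\alpha}}\le 2\varepsilon_0$, so that the global dispersive solution of Theorem \ref{thmdispersive} with data $f_0$ coincides with $f$ on $[0,+\infty)$ by uniqueness.

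The convergence \eqref{surjective convergence} is then immediate from the fixed-point equation $f(t)-\mathcal{S}(t)f_{+\infty} = -\int_t^{+\infty}\mathcal{S}(t-s)\mathcal{C}[f](s)\,ds$, in the same way as \eqref{scatt+infty} was deduced in Proposition \ref{scatteringstates}: \eqref{pointwise bound scattering} handles the $L^1_{xv}$ convergence and \eqref{mainLinfty} the $\l v\r^{-M}L^\infty_{xv}$ one, both with rate $t^{-1/2}$. Uniqueness of $f_0$ inside $B_{X_{M,\alpha}}(2\varepsilon_0)$ follows from injectivity of $U_+$ (Proposition \ref{scatteringinjectivity}), provided $\bar\varepsilon_4$ is chosen so that $2\bar\varepsilon_4<\bar\varepsilon_3$. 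I expect the main difficulty to be bookkeeping rather than substance: all four components of $\|\cdot\|_Y$ must be propagated jointly since they are coupled in the trilinear bounds, and the verification that $f_0$ satisfies the dispersive part of $\|\cdot\|_{X_{M,\alpha}}$ at negative times requires a separate (but parallel) argument to the bootstrap of Section \ref{section:GWP}.
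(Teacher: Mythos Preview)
Your proposal is correct and follows the same strategy as the paper: a Banach fixed point for the final-state Duhamel map $\Phi(f)(t)=\mathcal{S}(t)f_{+\infty}-\int_t^{+\infty}\mathcal{S}(t-s)\mathcal{C}[f]\,ds$, with stability and contraction supplied by the trilinear bounds of Section~\ref{section:trilinear} and the dispersive/bootstrap estimates of Section~\ref{section:GWP}.

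The only real difference is the choice of fixed-point space. The paper runs the contraction in $\mathcal{C}([0,+\infty),X_{M,\alpha})$, so that $f_0=f(0)\in B_{X_{M,\alpha}}(2\varepsilon_0)$ is automatic and no separate argument for the dispersive part of $\|f_0\|_{X_{M,\alpha}}$ at negative times is needed. Your space $Y$ only controls the dispersive norms of $f(t)$ at the current time $t$, so you must verify $f_0\in X_{M,\alpha}$ by hand afterwards---which you do correctly via the identity $\mathcal{S}(t)f_0=\mathcal{S}(t)f_{+\infty}-\int_0^{+\infty}\mathcal{S}(t-s)\mathcal{C}[f]\,ds$ and the observation that $|t-s|=s+|t|$ when $t<0\le s$. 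In fact the estimates the paper actually writes out for $J_{\mathcal{T}}(t)$ are precisely $Y$-type bounds, so your treatment is if anything more explicit on this point; otherwise the two arguments are interchangeable.
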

\begin{remark}
A similar statement holds for $t \to -\infty.$
\end{remark}
 \begin{proof} 


Consider $0<\bar{\varepsilon}_4<\frac{\bar{\varepsilon}}{2}$ sufficiently small and let $0<\varepsilon_0<\bar{\varepsilon}_4$.  Given $f_{+\infty}\in B_{X_{M,\alpha}}(\varepsilon_0) $
we define the map 
\begin{align} \label{defPhiinfty}
\Phi:f \mapsto \mathcal{S}(t) f_{+\infty} - \int_t^{+\infty} \mathcal{S}(t-s) \mathcal{C}[f] \, ds.
\end{align}
To prove the result, it suffices to show that $\Phi$ is a contraction on the ball  
$$\bm{\widetilde{B}}: =\Big\{f\in\mathcal{C}([0;+\infty),X_{M,\alpha})\,:\,\sup_{t\ge 0}\|f(t)\|_{X_{M,\alpha}}\leq 2\varepsilon_0\Big\}.$$
Indeed, if that is the case, the contraction mapping principle implies that $\Phi$ has a unique fixed point $f\in\bm{\widetilde{B}}$ i.e.
\begin{equation}\label{fixed point}
f(t)=\mathcal{S}(t)f_{+\infty}-\int_t^{+\infty}\mathcal{S}(t-s)\mathcal{C}[f](s)\,ds,\quad\forall t\ge 0.    
\end{equation} 
Defining $f_0=f(0)$, we take that $f_0\in B_{X_M}(2\varepsilon_0)$. Then filtering \eqref{fixed point} by $\mathcal{S}(-t)$, we take that $f_{+\infty}=U_+ f_0$ which is equivalent to \eqref{surjective convergence}.

Hence, we now focus on proving that $\Phi$ is a contraction of $\bm{\widetilde{B}}$.
\newline
\newline
\underline{Stability:} \newline
Let $f\in\bm{\widetilde{B}}$. Given $\mathcal{T}\in\{\mathcal{L}_1,\mathcal{L}_2,\mathcal{G}_1,\mathcal{G}_2\}$,    let us denote
\begin{align*}
    J_{\mathcal{T}}(t) := \int_t^{+\infty} \mathcal{S}(t-s) \mathcal{T}[f,f,f] \, ds.
\end{align*} 
Since $f_{+\infty}\in B_{X_{M,\alpha}}(\varepsilon_0) $, triangle inequality implies
\begin{equation}\label{triangle bound on Phi}
\|\Phi f(t)\|_{X_{M,\alpha}}\leq\varepsilon_0+\sum_{\mathcal{T}\in\{\mathcal{L}_1,\mathcal{L}_2,\mathcal{G}_1,\mathcal{G}_2\}} \|J_{\mathcal{T}}(t)\|_{X_{M,\alpha}}.   
\end{equation}

Fix $\mathcal{T}\in\{\mathcal{L}_1,\mathcal{L}_2,\mathcal{G}_1,\mathcal{G}_2\}$ and $t\ge 0$.
Using \eqref{joint L1} and the Cauchy-Schwarz inequality in $v$ we obtain
\begin{align}
    \Vert J_{\mathcal{T}}(t) \Vert_{L^1_{xv}} & \lesssim \int_{t}^{+\infty} \Vert \mathcal{T}[f,f,f] \Vert_{L^1_{xv}} \, ds\nonumber \\
    & \lesssim \sup_{s\ge t} \langle s \rangle^{3/2} \Vert \langle v \rangle^{\alpha} f(s) \Vert_{L^\infty_x L^2_v} \cdot \sup_{s \in [t;+\infty)} \Vert f(s) \Vert_{L^1_{xv}} \nonumber\\
    & \hspace{1cm}\times \sup_{s\ge t} \Vert \langle v \rangle f(s) \Vert_{L^\infty_{xv}} \int_{t}^{+\infty} \langle s \rangle^{-3/2} \, ds\nonumber \\
    &\lesssim \varepsilon_0^3<\frac{\varepsilon_0}{4},\label{Phi L1 bound}
\end{align}
since $\varepsilon_0<\bar{\varepsilon}_4$, and $\bar{\varepsilon}_4$ is sufficiently small.

Similarly,  using \eqref{joint weighted Linfty} instead, we obtain 
\begin{equation}\label{Phi Linf bound}
    \Vert \langle v \rangle^M J_{\mathcal{T}}(t) \Vert_{L^\infty_{xv}} \lesssim\varepsilon_0^3<\frac{\varepsilon_0}{4},
\end{equation}
since $\varepsilon_0<\bar{\varepsilon}_4$, and $\bar{\varepsilon}_4$ is sufficiently small.

For $\| \langle v \rangle^\alpha J_{\mathcal{T}}(t)\|_{L^\infty_x L^2_v}$, we decompose 
\begin{align}\label{triangle inequality on J}
  \| \langle v \rangle^\alpha J_{\mathcal{T}}(t)\|_{L^\infty_x L^2_v}\leq \| \langle v \rangle^\alpha J_{\mathcal{T}}(t+1)\|_{L^\infty_x L^2_v}+ \big \| \langle v \rangle^\alpha \big( J_{\mathcal{T}}(t)-J_{\mathcal{T}}(t+1) \big) \big\|_{L^\infty_x L^2_v}  .
\end{align}
To estimate the bulk term, we use \eqref{commutation identity}-\eqref{decaylin}, as well as  \eqref{joint weighted Linfty} to obtain 
\begin{align*}
    \Vert \langle v \rangle^\alpha J_{\mathcal{T}}(t+1) \Vert_{L^\infty_x L^2_v} & \leq \int_{t+1}^{+\infty} \frac{1}{(s-t)^{3/2}} \Vert \langle v \rangle^\alpha \mathcal{T}[f,f,f] \Vert_{L^2_x L^\infty_v} \, ds \\
    &\lesssim \sup_{s\ge t+1} \langle s \rangle^{3/2} \Vert \langle v \rangle f(s) \Vert_{L^2_x L^1_v} \cdot \big(\sup_{s\geq t+1}  \Vert \langle v \rangle^3 f(s) \Vert_{L^\infty_{xv}} \big)^2  \\
    & \times \int_{t+1}^{+\infty} \frac{ds}{(s-t)^{3/2} \langle s \rangle^{3/2}}\\
    &\lesssim \l t\r^{-3/2}\varepsilon_0^3.
\end{align*}
We may also bound in the same way
\begin{align*}
\big \|  \langle v \rangle^\alpha \big( J_{\mathcal{T}}(t)-J_{\mathcal{T}}(t+1) \big)  \big \|_{L^\infty_x L^2_v}& \lesssim \varepsilon_0^3\int_{t}^{t+1} \frac{ds}{\langle s \rangle^{3/2}}\lesssim \l t\r^{-3/2}\varepsilon_0^3.   
\end{align*}
By \eqref{triangle inequality on J} we obtain 
\begin{equation}\label{Phi LinfL2}
\| \langle v \rangle^\alpha J_{\mathcal{T}}(t)\|_{L^\infty_x L^2_v}\lesssim\l t\r^{-3/2}\varepsilon_0^3<\l t\r^{-3/2}\frac{\varepsilon_0}{4},
\end{equation}
since $\varepsilon_0<\bar{\varepsilon}_4$ and $\bar{\varepsilon}_4$ is sufficiently small.

Finally, by a similar argument using \eqref{joint L2 1w} instead, obtain 
\begin{equation}\label{Phi L2L1 bound}
\|  \langle v \rangle J_{\mathcal{T}}(t)\|_{L^2_x L^1_v}\lesssim\l t\r^{-3/2}\varepsilon_0^3<\l t\r^{-3/2}\frac{\varepsilon_0}{4},
\end{equation}
since $\varepsilon_0<\bar{\varepsilon}_4$ and $\bar{\varepsilon}_4$ is sufficiently small.

Combining \eqref{triangle bound on Phi}, \eqref{Phi L1 bound}, \eqref{Phi Linf bound}, \eqref{Phi LinfL2}, \eqref{Phi L2L1 bound}, we obtain
$$\|\Phi f\|_{\mathcal{C}([0;+\infty),X_{M,\alpha})}<2\varepsilon_0.$$
Moreover $\Phi f$ is clearly continuous in time, hence $\Phi:\bm{B}\to\bm{B}$.
\newline
\newline
\underline{Contraction:} \newline
The proof is almost identical to stability, relying this time on Lemma \ref{contractscatt} instead, therefore we omit the details of the proof. For $f,g \in \bm{\widetilde{B}}$, we obtain
\begin{align*}
    \|\Phi f -\Phi g\|_{\mathcal{C}([0;+\infty),X_{M,\alpha})}\lesssim\varepsilon_0^2 \| f- g\|_{\mathcal{C}([0;+\infty),X_{M,\alpha})}\leq\frac{1}{2} \| f- g\|_{\mathcal{C}([0;+\infty),X_{M,\alpha})},
\end{align*}
since $\varepsilon_0<\bar{\varepsilon}_4$, and $\bar{\varepsilon}_4$ is sufficiently small.

The result is proved.
\end{proof}

\section{Positivity of the solution} \label{sec:non-negative solution} 
In this section we show that \eqref{KWE} preserves positivity forward in time i.e. for positive initial data, the solution obtained in Theorem \ref{thmdispersive} remains positive for $t\geq 0$. This comes in agreement with the fact that the solution represents an energy spectrum. In order to show positivity, we will rely on the monotonicity properties of the equation. Namely, we will employ a classical tool from the kinetic theory of particles, the so-called Kaniel-Shinbrot \cite{KS} iteration scheme. The idea is to approximate the solution of the equation with positive subsolutions and supersolutions, and pass to the limit to retrieve the global solution of the nonlinear equation \eqref{KWE}. Unlike classical applications of the Kaniel-Shinbrot \cite{KS} iteration, we will need to show that the iterates satify dispersive bounds as well. This can be achieved by initializing the scheme properly, namely by solving the gain only equation \eqref{KWE}.

The main result of this section is the following:

\begin{theorem} \label{thmpos}
   Let $\overline{\varepsilon}, A$ be as in the statement of Theorem \ref{thmdispersive}. Then, there exists $0<\bar{\varepsilon}_5<\bar{\varepsilon}$ such that if $0<\varepsilon_0<\overline{\varepsilon}_5$ and $f_0 \in B_{X_{M,\alpha}}(\varepsilon_0)$ with $f_0\geq 0$, the global solution in Theorem \ref{thmdispersive} is non-negative for all $t\ge 0$. 
\end{theorem}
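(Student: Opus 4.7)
The plan is to implement a Kaniel-Shinbrot \cite{KS} iteration, carefully initialized by the gain-only flow so that the entire sequence inherits the dispersive bounds of Section \ref{section:GWP}, and then to identify its monotone limit with the solution $f$ produced in Theorem \ref{thmdispersive}. First I would construct a non-negative global solution $u_0$ to the gain-only equation $\partial_t u_0 + v\cdot\nabla_x u_0 = \mathcal{G}[u_0]$ with $u_0(0)=f_0$, by repeating verbatim the contraction and bootstrap arguments of Theorems \ref{local existence} and \ref{thmdispersive}, keeping only the gain terms. Since $\mathcal{G}$ preserves the cone of non-negative functions, the Duhamel fixed point may be carried out on this cone, yielding $u_0 \geq 0$ with $\sup_{t \geq 0}\|u_0(t)\|_{X_{M,\alpha}}\leq 2\varepsilon_0$.

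Next, set $\ell_0 := 0$ and define $\ell_{n+1}, u_{n+1}$ as the unique solutions of the linear transport-with-relaxation problems
\begin{align*}
(\partial_t + v\cdot\nabla_x)\ell_{n+1} + \ell_{n+1}\mathcal{R}[u_n,u_n] &= \mathcal{G}[\ell_n], & \ell_{n+1}(0) &= f_0,\\
(\partial_t + v\cdot\nabla_x)u_{n+1} + u_{n+1}\mathcal{R}[\ell_n,\ell_n] &= \mathcal{G}[u_n], & u_{n+1}(0) &= f_0,
\end{align*}
given explicitly along characteristics by Duhamel with the positive exponential integrating factor built from $\mathcal{R}[u_n,u_n]$ or $\mathcal{R}[\ell_n,\ell_n]$. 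I would then verify by induction the Kaniel-Shinbrot chain
\begin{equation*}
0 = \ell_0 \leq \ell_1 \leq \cdots \leq \ell_n \leq u_n \leq \cdots \leq u_1 \leq u_0,
\end{equation*}
using positivity of $f_0$, monotonicity of $\mathcal{G}$ and $\mathcal{R}$ on the non-negative cone, and the sign of the Duhamel kernel. The base case is clean because $\mathcal{R}[\ell_0,\ell_0] = 0$ forces $u_1 \equiv u_0$ through the Duhamel formula for $u_0$, while $\ell_1$ is the product of $f_0(x-vt,v) \geq 0$ with the integrating factor $\leq 1$, and hence non-negative and pointwise bounded by $u_0$.

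From the domination $\ell_n, u_n \leq u_0 \in X_{M,\alpha}$, every iterate inherits the dispersive and weighted norms of Theorem \ref{thmdispersive} uniformly in $n$. Monotone convergence delivers pointwise limits $\ell_n \uparrow \ell_\infty$ and $u_n \downarrow u_\infty$, both in $X_{M,\alpha}$ with the same bound, and dominated convergence lets me pass to the limit in the linear equations to obtain a coupled system for $(\ell_\infty, u_\infty)$ with common initial data $f_0$. To close the argument I set $w := u_\infty - \ell_\infty \geq 0$ and consider the equation
\begin{equation*}
(\partial_t + v\cdot\nabla_x) w = \mathcal{G}[u_\infty] - \mathcal{G}[\ell_\infty] - w\,\mathcal{R}[u_\infty,u_\infty] + u_\infty\bigl(\mathcal{R}[u_\infty,u_\infty]-\mathcal{R}[\ell_\infty,\ell_\infty]\bigr),
\end{equation*}
with $w(0) = 0$, and estimate it in $L^1_{xv}$ using the contraction bounds of Lemma \ref{contractscatt} together with the time integrability $\int \langle s \rangle^{-3/2}\,ds < \infty$ inherited from the dispersive norms of $u_0$. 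This yields $\sup_t \|w(t)\|_{L^1_{xv}} \leq C\varepsilon_0^2 \sup_t \|w(t)\|_{L^1_{xv}}$, so for $\varepsilon_0$ sufficiently small $w \equiv 0$. The common value $\ell_\infty = u_\infty$ then solves \eqref{KWE} in $X_{M,\alpha}$ with small norm and initial data $f_0$, so uniqueness in Theorem \ref{thmdispersive} identifies it with $f$, proving $f(t) \geq 0$ for all $t \geq 0$.

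The main obstacle is not the monotonicity bookkeeping, which is classical, but the dispersive propagation: the iterates satisfy \emph{linear} equations whose source and relaxation rate depend on the previous step, so a naive initialization would destroy the crucial $\langle t \rangle^{-3/2}$ decay. This is precisely why the scheme must be initialized by the \emph{gain-only} solution rather than, say, by the free transport of $f_0$: the pointwise domination $\ell_n, u_n \leq u_0$ transfers the dispersive norms of $u_0$ to every iterate at once, and in turn enables the smallness-based contraction that closes the squeeze $\ell_\infty = u_\infty$.
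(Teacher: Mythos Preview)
Your proposal is correct and follows essentially the same approach as the paper: initialize the Kaniel--Shinbrot scheme with $\ell_0=0$ and $u_0$ the gain-only solution (so that $u_1=u_0$), propagate the nested inequalities by monotonicity, inherit the dispersive bounds from $u_0$ via pointwise domination, and close $\ell_\infty=u_\infty$ by an $L^1_{xv}$ contraction using \eqref{contraction estimate L1} and the integrable $\langle s\rangle^{-3/2}$ decay. The only cosmetic difference is that the paper establishes non-negativity of the gain-only solution via a monotone Picard iteration rather than by restricting the contraction to the non-negative cone, but both routes are valid.
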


\subsection{The gain only equation}
We start by studying the gain only equation, since that will allow us to properly initialize our iteration scheme. We will show that the gain only equation has a unique solution forward in time which is dispersive, and that for positive initial data, the corresponding solution remains positive. First we give a definition of a solution to the gain only equation \eqref{KWE}:
\begin{definition}\label{definition of a solution gain only} Let $I\subseteq\R$ be an interval with $0\in I$, and $f_0\in X_{M,\alpha}$. We say that $f\in \mathcal{C}(I,\l v\r^{-M}L^\infty_{x,v})$ is a solution of the gain only \eqref{KWE} in $I$ with initial data $f_0$, if
\begin{equation}\label{Duhamel formula gain}
f(t)=\mathcal{S}(t)f_0+\int_0^t\mathcal{S}(t-s)\mathcal{G}[f](s)\,ds,\quad\forall t\in I.    
\end{equation}  
\end{definition}
\begin{remark}
Analogous conclusions to Remark \ref{remark on well defined terms} hold for $\int_0^t\mathcal{S}(t-s)\mathcal{G}[f](s)\,ds$.
\end{remark}

\begin{proposition}\label{proposition gain only}
Let  $\overline{\varepsilon}, A$ be as in the statement of Theorem \ref{thmdispersive}. Then, for $0<\varepsilon_0<\overline{\varepsilon}$ and $f_0 \in B_{X_{M,\alpha}}(\varepsilon_0)$ with $f_0\ge 0$, the gain equation \eqref{KWE}  has a unique non-negative solution $\widetilde{f}\in \mathcal{C}([0;+\infty),\l v\r^{-M}L^\infty_{xv})$ which is dispersive, meaning it satisfies the bound 
 \begin{align}\label{dispersive bound gain only}
        \sup_{t \ge 0} \big\{ \langle t \rangle ^{3/2} \Vert \langle v \rangle \widetilde{f}(t) \Vert_{L^{2}_x L^1_v} + \langle t \rangle ^{3/2} \Vert \langle v \rangle^\alpha \widetilde{f}(t) \Vert_{L^{\infty}_x L^2_v} + \Vert \widetilde{f}(t) \Vert_{L^{1}_{xv}} + \Vert \langle v \rangle^M \widetilde{f}(t) \Vert_{L^{\infty}_{xv}} \big\} < A\varepsilon_0. 
    \end{align}
\end{proposition}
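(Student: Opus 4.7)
The plan is to mimic the proof of Theorem \ref{thmdispersive} almost verbatim, with $\mathcal{C}$ replaced by $\mathcal{G}=\mathcal{G}_1+\mathcal{G}_2$, and then tack on a short positivity argument at the end. Since all the trilinear estimates recorded in Corollaries \eqref{joint weighted Linfty}--\eqref{joint L1} and Lemma \ref{contractLWP} are stated uniformly for $\mathcal{T}\in\{\mathcal{L}_1,\mathcal{L}_2,\mathcal{G}_1,\mathcal{G}_2\}$, discarding the loss operators only removes terms from every bound. Consequently the contraction argument that gives local existence (Theorem \ref{local existence}) and the four bootstrap estimates \eqref{boot1imp}--\eqref{boot4imp} (Proposition \ref{boot main prop}) go through unchanged, yielding a unique $\widetilde f\in\mathcal{C}([0;+\infty),\langle v\rangle^{-M}L^\infty_{xv})$ solving \eqref{Duhamel formula gain} and satisfying the dispersive bound \eqref{dispersive bound gain only}. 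The one cosmetic difference is that we only work on $t\geq0$, so there is no need to run the backward argument via the time-reversed equation that appeared in the proof of Theorem \ref{thmdispersive}.

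For the positivity statement, the key observation is that $\mathcal{G}$ is a \emph{positive} operator and is \emph{monotone} for non-negative inputs, and that the free transport semigroup $\mathcal{S}(t)$ preserves positivity since it acts by a mere pullback along characteristics. The plan is to construct $\widetilde f$ as the limit of the Picard iteration
\begin{equation*}
\widetilde f^{(0)}(t):=\mathcal{S}(t)f_0,\qquad \widetilde f^{(n+1)}(t):=\mathcal{S}(t)f_0+\int_0^t\mathcal{S}(t-s)\mathcal{G}[\widetilde f^{(n)}](s)\,ds.
\end{equation*}
Because $f_0\geq0$, we have $\widetilde f^{(0)}\geq0$; by induction, positivity of $\mathcal{S}$ and $\mathcal{G}$ gives $\widetilde f^{(n)}\geq0$ for all $n$. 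The contraction estimates used to set up Theorem \ref{local existence} ensure that on $[0;3]$ the iterates converge to the unique fixed point $\widetilde f$ in $\mathcal{C}([0;3],\langle v\rangle^{-M}L^\infty_{xv})$, which forces $\widetilde f\geq0$ on $[0;3]$ (pointwise a.e., and then everywhere by continuity of the representative). To push positivity past $t=3$, I would iterate this argument on the successive intervals $[3n;3(n+1)]$, using $\widetilde f(3n)\geq0$ (which belongs to $B_{X_{M,\alpha}}(A\varepsilon_0)$ by the already established dispersive bound \eqref{dispersive bound gain only}) as the new initial datum; shrinking $\overline\varepsilon$ if needed to keep $A\varepsilon_0<\overline\varepsilon_0$ so that Theorem \ref{local existence} applies at each restart.

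The only step requiring any real care is checking that the Picard limit indeed coincides pointwise (almost everywhere) with $\widetilde f$, so that the pointwise inequality $\widetilde f^{(n)}\geq0$ passes to the limit. This is where I would use convergence in $\langle v\rangle^{-M}L^\infty_{xv}$, which in particular implies pointwise a.e. convergence up to a subsequence. The rest is essentially bookkeeping: uniqueness in the class specified by Definition \ref{definition of a solution gain only} follows from the same contraction argument applied to a difference of two candidate solutions, exactly as in the stability/contraction step of Theorem \ref{local existence}. I do not anticipate any genuine obstacle beyond making sure the small-data window is uniform across the successive restarts used to globalize positivity.
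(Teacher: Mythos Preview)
Your existence/uniqueness/dispersive part is exactly what the paper does: rerun Theorem \ref{thmdispersive} with $\mathcal{L}$ set to zero, keep only $t\ge 0$.

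For positivity your approach works, but it is more laborious than the paper's, and the extra labor is avoidable. The paper uses the same Picard iteration, but observes one additional fact you did not exploit: because $\mathcal{G}$ is \emph{monotone} (not merely positive) for non-negative inputs, the iterates are \emph{increasing}: $\widetilde f^{(0)}\le \widetilde f^{(1)}\le\cdots$. Hence $\widetilde f^{(n)}\nearrow \widetilde g$ pointwise for every $t\ge 0$, no subsequence or a.e.\ argument needed, and one can pass to the limit in \eqref{Duhamel formula gain} by the monotone convergence theorem to see that $\widetilde g$ solves the same initial value problem. Uniqueness then gives $\widetilde g=\widetilde f\ge 0$ in one stroke. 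This bypasses your interval-by-interval restart and in particular your worry about ``shrinking $\overline\varepsilon$ if needed,'' which is awkward here since the statement pins $\overline\varepsilon$ to the value from Theorem \ref{thmdispersive}. (If you keep your route, note that the restart only needs the weighted $L^\infty$ smallness used in the contraction of Theorem \ref{local existence}, and the relevant bound is the sharper $A_0\varepsilon_0$ from the bootstrap, not the cruder $A\varepsilon_0$; with that constant the restart does go through without altering $\overline\varepsilon$.)
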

\begin{proof}
Simply set $\mathcal{L}=0$ and all the estimates written in the proof of Theorem \ref{thmdispersive} remain true. Omitting the details, this yields existence and uniqueness of a solution $\widetilde{f}\in \mathcal{C}([0;+\infty),\l v\r^{-M}L^\infty_{xv})$ to the gain only \eqref{KWE}, as well as  the dispersive bound \eqref{dispersive bound gain only}.

For positivity,  we consider the following iteration scheme for $t\geq 0$:
\begin{equation} \label{gain only iteration}
\begin{cases}
  \widetilde{f}_{n+1}=\mathcal{S}(t)f_0+\int_0^t\mathcal{S}(t-s)\mathcal{G}[\widetilde{f}_n](s)\,ds, \quad n\in\mathbb{N},\\
  \widetilde{f}_0=\mathcal{S}(t)f_0.
\end{cases} 
\end{equation}
Since $f_0\geq 0$ and $\mathcal{G}$ is positive, by induction we obtain that $(\widetilde{f}_n)_{n=0}^\infty$ is increasing and non-negative, hence $\widetilde{f}_n\nearrow \widetilde{g}$. Clearly $\widetilde{g}\geq 0$. 


Now, we can use the monotone convergence theorem to pass to the limit in \eqref{gain only iteration}, and obtain that $\widetilde{g}$ satisfies the same initial value problem as $\widetilde{f}$. By uniqueness, we obtain $\widetilde{f} = \widetilde{g},$ hence $\widetilde{f} \geq 0.$
\end{proof}

\subsection{The Kaniel-Shinbrot iteration scheme} We formally first outline the Kaniel-Shinbrot iteration scheme which we will use in order to construct subsolutions and supersolutions to \eqref{KWE KS}.

Using \eqref{collisional operator frequency}, we write the equation as follows: 
\begin{equation}\label{KWE KS}
(\partial_t +v\cdot\nabla_x)f+f\mathcal{R}[f,f]=\mathcal{G}[f],
\end{equation}
where $\mathcal{R},\mathcal{G}$ are given by \eqref{collision frequency} and \eqref{gain operator} respectively.
Let $u_0, l_0:[0,\infty)\times\R_x^3\times\R_v^3\to\R$ with $0\leq l_0\leq u_0$. 
 For $n\in\mathbb{N}$, consider the coupled  linear initial value problems
\begin{equation}\label{lower IVP n}
\begin{cases}
(\partial_t +v\cdot\nabla_x)l_n+ l_n \mathcal{R}[u_{n-1},u_{n-1}]&=\mathcal{G}[l_{n-1}],\\
l_n(0)=f_0,
\end{cases}
\end{equation}

\begin{equation}\label{upper IVP n}
\begin{cases}
(\partial_t +v\cdot\nabla_x)u_n+ u_n \mathcal{R}[l_{n-1},l_{n-1}]&=\mathcal{G}[u_{n-1}],\\
u_n(0)=f_0,
\end{cases}
\end{equation}


Solutions to \eqref{lower IVP n}-\eqref{upper IVP n} are given inductively by
\begin{equation}
\begin{split}\label{solution lower IVP n}
l_n(t)&=(\mathcal{S}(t)f_0)\exp\left( -\int_0^t \mathcal{S}(t-s)\mathcal{R}[u_{n-1},u_{n-1}](s)\,ds\right)\\
&\quad+\int_0^t \mathcal{S}(t-s)\mathcal{G}[l_{n-1}](s)\exp\left(-\int_s^t \mathcal{S}(t-\tau)\mathcal{R}[u_{n-1},u_{n-1}](\tau)\,d\tau \right)\,ds,
\end{split}
\end{equation}
and
\begin{equation}
\begin{split}\label{solution upper IVP n}
u_n(t)&=(\mathcal{S}(t)f_0)\exp\left( -\int_0^t \mathcal{S}(t-s)\mathcal{R}[l_{n-1},l_{n-1}](s)\,ds\right)\\
&\quad+\int_0^t \mathcal{S}(t-s)\mathcal{G}[u_{n-1}](s)\exp\left(-\int_s^t \mathcal{S}(t-\tau)\mathcal{R}[l_{n-1},l_{n-1}](\tau)\,d\tau \right)\,ds.
\end{split}
\end{equation}

\subsubsection*{Initialization}
We first initiate the scheme by choosing $l_0,u_0$ so dispersion is preserved and the following beginning condition 
$$0\leq l_0\leq l_1\leq u_1\leq u_0$$
is satisfied.

\begin{lemma} \label{initialization}
Let $\overline{\varepsilon}, A$ be as in the statement of Theorem \ref{thmdispersive}. Then, for $0<\varepsilon_0<\overline{\varepsilon}$ and $f_0 \in B_{X_{M,\alpha}}(\varepsilon_0)$ with $f_0\geq 0$,
there exist $l_0,u_0$ with $0 \leq l_0 \leq u_0$ for the coupled initial value problems \eqref{lower IVP n}-\eqref{upper IVP n} such that the first iterates $l_1,u_1$ satisfy
\begin{equation}\label{beginning condition}
0\leq l_0\leq l_1\leq u_1\leq u_0.
\end{equation}
Moreover $u_0$ satisfies the dispersive bound \eqref{dispersive bound gain only}.
\end{lemma}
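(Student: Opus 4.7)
The plan is to initialize the Kaniel--Shinbrot scheme with $l_0 \equiv 0$ and $u_0 := \widetilde{f}$, where $\widetilde{f}$ is the non-negative dispersive solution to the gain-only equation furnished by Proposition \ref{proposition gain only}. With this choice, $0 \leq l_0 \leq u_0$ is trivial, $u_0 \geq 0$, and $u_0$ satisfies \eqref{dispersive bound gain only} by construction.

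To establish \eqref{beginning condition}, I would compute $l_1$ and $u_1$ from the explicit formulas \eqref{solution lower IVP n}--\eqref{solution upper IVP n}. The crucial observation is that with $l_0 \equiv 0$ the collision frequency $\mathcal{R}[l_0,l_0]$ vanishes, so the exponential factor in \eqref{solution upper IVP n} equals $1$ and we get
\[
u_1(t) = \mathcal{S}(t) f_0 + \int_0^t \mathcal{S}(t-s) \mathcal{G}[u_0](s)\,ds.
\]
But this is exactly the Duhamel formula \eqref{Duhamel formula gain} satisfied by $\widetilde{f} = u_0$, so $u_1 = u_0$ identically, making the comparison $u_1 \leq u_0$ automatic (with equality).

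For the lower iterate, \eqref{solution lower IVP n} with $l_0 = 0$ reduces to the first summand only:
\[
l_1(t) = \bigl(\mathcal{S}(t) f_0\bigr)\exp\Bigl(-\int_0^t \mathcal{S}(t-s) \mathcal{R}[u_0,u_0](s)\,ds\Bigr) \geq 0,
\]
since $f_0 \geq 0$ and the semigroup preserves positivity; this gives $l_0 \leq l_1$. To see $l_1 \leq u_1$, I would bound the exponential factor above by $1$ to obtain $l_1(t) \leq \mathcal{S}(t) f_0$, and then use non-negativity of $\widetilde{f}$ (hence of $\mathcal{G}[\widetilde{f}]$) in \eqref{Duhamel formula gain} to conclude $\mathcal{S}(t) f_0 \leq \widetilde{f}(t) = u_0(t) = u_1(t)$, closing the chain $0 = l_0 \leq l_1 \leq u_1 = u_0$.

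There is no substantial technical obstacle; the entire content of the lemma lies in the choice of $u_0$. Taking $u_0 = \widetilde{f}$ rather than the simpler $u_0 = \mathcal{S}(t) f_0$ is essential because it simultaneously transfers the dispersive estimates of Proposition \ref{proposition gain only} onto $u_0$ and produces the identity $u_1 = u_0$, which is what will allow these dispersive bounds to survive through the subsequent iterates when the monotonicity machinery is deployed to pass to the limit in the scheme.
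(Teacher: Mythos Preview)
Your proposal is correct and matches the paper's proof essentially verbatim: the paper also sets $l_0=0$, $u_0=\widetilde{f}$, observes that $u_1$ then solves the gain-only equation and hence equals $u_0$ by uniqueness, and uses positivity of $\mathcal{G}$ and $\mathcal{R}$ to get $0=l_0\leq l_1\leq u_1$.
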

\begin{proof}
Let $\widetilde{f}$ be the solution given by Proposition \ref{proposition gain only} for initial data $f_0\in B_{X_{M,\alpha}}(\varepsilon_0).$

We define $l_0=0$, $u_0=\widetilde{f}$.

Then \eqref{solution lower IVP n}-\eqref{solution upper IVP n} give the following expressions for the first iterates:
\begin{align*}
l_1(t)&=(\mathcal{S}(t)f_0)\exp\left(-\int_0^t \mathcal{S}(t-s)\mathcal{R}[u_0,u_0](s)\,ds\right),\\
u_1(t)&=\mathcal{S}(t)f_0+\int_0^t \mathcal{S}(t-s) \mathcal{G}[u_0](s)\,ds.
\end{align*}
Since the operator $\mathcal{G}, \mathcal{R}$ are positive, and  $\widetilde{f} \geq 0$ by Proposition \ref{proposition gain only}, we have
\begin{align*}
    \int_0^t \mathcal{S}(t-s)\mathcal{R}[u_0,u_0](s)\,ds , \int_0^t \mathcal{S}(t-s) \mathcal{G}[u_0](s)\,ds \geq 0 .
\end{align*}
As a result $0=l_0\leq l_1\leq u_1$.

Note that the initial value problem for $u_1$ is exactly that of the gain-only KWE, therefore by uniqueness in Proposition \ref{proposition gain only} we have $u_1=\widetilde{f}=u_0,$ and we can conclude that the initialization condition is fulfilled. 
\end{proof}

\subsubsection*{Properties of iterates} Now, we prove monotonicity of the scheme.

\begin{proposition}\label{kaniel-shinbrot prop} 
Let $l_n, u_n$ be the solutions to \eqref{solution lower IVP n}-\eqref{solution upper IVP n} for $l_0,u_0$ chosen as in Lemma \ref{initialization}.

Then for all $n\in\mathbb{N}$ and $t\geq 0$, we have
\begin{equation}\label{inductive nesting}
\begin{split}
0&\leq l_0\leq l_1\leq\dots\leq l_{n-1}\leq l_{n}\leq u_n \leq u_{n-1}\leq\dots\leq u_1\leq u_0
\end{split}
\end{equation}
Moreover $(l_n)_{n=0}^\infty$, $(u_n)_{n=0}^\infty$ satisfy the dispersive estimate \eqref{dispersive bound gain only}
\end{proposition}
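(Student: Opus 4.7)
The plan is to proceed by strong induction on $n$, establishing simultaneously the nested chain \eqref{inductive nesting} and the dispersive estimate \eqref{dispersive bound gain only} for every $l_n$ and $u_n$. The key point is that the dispersive bound will never be proved directly for individual iterates; instead it will follow for free from the monotonicity, by sandwiching between $0$ and $u_0 = \widetilde{f}$.

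The base case is exactly Lemma \ref{initialization}: $0 = l_0 \leq l_1 \leq u_1 = u_0 = \widetilde{f}$, and the dispersive bound on $u_0$ is given by Proposition \ref{proposition gain only}. For the inductive step, assume the chain $0 \leq l_0 \leq \dots \leq l_n \leq u_n \leq \dots \leq u_0$ holds. Using the explicit representations \eqref{solution lower IVP n}--\eqref{solution upper IVP n} and the fact that $\mathcal{G}$ and $\mathcal{R}$ are positive and monotone on non-negative inputs, each of the three comparisons to be proved reduces to a sign check: for $l_n \leq l_{n+1}$, the source $\mathcal{G}[l_n]$ is at least $\mathcal{G}[l_{n-1}]$ while the damping $\mathcal{R}[u_n,u_n]$ is at most $\mathcal{R}[u_{n-1},u_{n-1}]$ (since $u_n\leq u_{n-1}$), and both effects drive $l_{n+1}$ above $l_n$; the inequality $u_{n+1}\leq u_n$ is the symmetric statement with sources and dampings exchanged; and for $l_{n+1}\leq u_{n+1}$ one uses $l_n\leq u_n$ directly inside \eqref{solution lower IVP n}--\eqref{solution upper IVP n}.

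Once the nesting is extended to level $n+1$, the dispersive bound \eqref{dispersive bound gain only} for $l_{n+1}$ and $u_{n+1}$ is immediate: both iterates are non-negative and pointwise majorized by $u_0 = \widetilde{f}$, so every norm appearing in \eqref{dispersive bound gain only} (each of which is monotone with respect to $|\cdot|$) is controlled by the corresponding norm of $\widetilde{f}$, which in turn satisfies the bound by Proposition \ref{proposition gain only}. This is precisely why the initialization in Lemma \ref{initialization} was chosen with $u_0 = \widetilde{f}$ rather than, say, $u_0 = +\infty$: the dispersive control has to enter through the top of the sandwich.

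The main obstacle is not the ordering argument itself, which is a standard feature of Kaniel-Shinbrot schemes, but rather making sure that each step is genuinely well-posed, so that the representations \eqref{solution lower IVP n}--\eqref{solution upper IVP n} are legitimate identities in $\mathcal{C}([0;\infty),\langle v\rangle^{-M}L^\infty_{xv})$. For this, one must verify that $\int_0^t\mathcal{S}(t-s)\mathcal{R}[u_{n-1},u_{n-1}](s)\,ds$ and $\int_0^t\mathcal{S}(t-s)\mathcal{R}[l_{n-1},l_{n-1}](s)\,ds$ are finite and continuous in $t$, so that the exponential factors make sense. Since $\mathcal{R}[g,g] = (\mathcal{L}_1[1,g,g]+\mathcal{L}_2[1,g,g])/1$ up to a harmless abuse, this reduces via \eqref{trilossmain} and the already-established dispersive bound on $u_{n-1}$ (inherited from $\widetilde{f}$) to an integral in time of the form $\int_0^t \langle s\rangle^{-3/2}\,ds$, which is finite. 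Granting this, the induction closes cleanly and the proposition follows.
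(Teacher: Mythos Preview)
Your proposal is correct and follows essentially the same approach as the paper: induction on $n$, using the monotonicity of $\mathcal{G}$ and $\mathcal{R}$ on non-negative inputs together with the explicit representations \eqref{solution lower IVP n}--\eqref{solution upper IVP n} to propagate the nesting, then deducing the dispersive bound from the sandwich $0\leq l_n\leq u_n\leq u_0=\widetilde f$. Your additional paragraph on the well-posedness of each linear step (finiteness of $\int_0^t\mathcal{S}(t-s)\mathcal{R}[u_{n-1},u_{n-1}]\,ds$) is a point the paper leaves implicit, but it is a legitimate check and your argument for it is correct.
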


\begin{proof}
We argue by induction. For $n=1,$ \eqref{inductive nesting} is given by \eqref{beginning condition}, while Proposition \ref{proposition gain only} gives \eqref{global bounds} at the first step. 

Fix $n \in \mathbb{N}.$ Assume that \eqref{inductive nesting} holds for $n$, and that $l_n$ and $u_n$ satisfy \eqref{global bounds}.

Since $l_{n-1} \leq l_n$ and $u_{n} \leq u_{n-1}$,  we have
\begin{equation*}
\mathcal{R}[u_{n},u_{n}]\leq \mathcal{R}[u_{n-1},u_{n-1}], \quad \mathcal{G}[l_{n-1}]\leq \mathcal{G}[l_n],
 \end{equation*}
which, together with \eqref{solution lower IVP n}, implies $l_n\leq l_{n+1}$. 

Similarly,
$u_{n+1}\leq u_n$. 

Finally 
$$\mathcal{R}[l_{n},l_{n}]\leq \mathcal{R}[u_{n},u_{n}],\quad \mathcal{G}[l_{n}]\leq \mathcal{G}[u_{n}],$$
and \eqref{solution lower IVP n}-\eqref{solution upper IVP n} imply $l_{n+1}\leq u_{n+1}$. Hence \eqref{inductive nesting} holds for all $n$ by induction. 

Finally,  the dispersive estimate \eqref{dispersive bound gain only} holds for $(l_n)_{n=0}^\infty$, $(u_n)_{n=0}^\infty$ due to \eqref{inductive nesting}, and the fact that $u_0=\widetilde{f}$.

\end{proof}

\subsubsection*{Conclusion of the proof} 

\begin{proof}[Proof of Theorem \ref{thmpos}]

Consider $0<\bar{\varepsilon}_6<\bar{\varepsilon}$ sufficiently small.
Let $0<\varepsilon_0<\bar{\varepsilon}_5$, $f_0\in B_{X_{M,\alpha}}(\varepsilon_0)$, and consider the sequences $(l_n)_{n=1}^\infty$, $(u_n)_{n=1}^\infty$ constructed in Lemma \ref{kaniel-shinbrot prop}.

For fixed $t\geq 0,$ the sequence $(l_n(t))_{n=0}^\infty$ is increasing and upper bounded by \eqref{inductive nesting}, therefore it converges $l_n(t)\nearrow l(t)$. Similarly the sequence  $(u_n(t))_{n=0}^\infty$ is decreasing and lower bounded so $u_n(t)\searrow u(t)$. Moreover, by \eqref{inductive nesting} we have
$$0\leq l(t)\leq u(t)\leq u_0.$$

Since $u_0 = \widetilde{f}$ satisfies \eqref{dispersive bound gain only}, so do $l$ and $u.$ 

 Integrating \eqref{lower IVP n}-\eqref{upper IVP n} in time and using the dominated convergence theorem to let $n\to\infty$ , we obtain
\begin{align}
l(t)+ \int_0^t \mathcal{S}(t-s)[l\mathcal{R}[u,u](s)]\,ds&=\mathcal{S}(t)f_0+\int_0^t \mathcal{S}(t-s)\mathcal{G}[l](s)\,ds,\label{integrated loss}\\
u(t)+ \int_0^t \mathcal{S}(t-s)[u\mathcal{R}[l,l](s)]\,ds &=\mathcal{S}(t) f_0+\int_0^t \mathcal{S}(t-s)\mathcal{G}[u](s)\,ds.\label{integrated gain}
\end{align}

Given $t\ge 0$, \eqref{integrated loss}-\eqref{integrated gain}, \eqref{gain decomposed}, \eqref{collision equal loss} and \eqref{conslinweight} imply
 \begin{equation*}
 \begin{split}
 \|&u(t)-l(t)\|_{L^1_{xv}}\\
 &\leq \int_0^t \left\|\mathcal{S}(t-s) \big(\mathcal{G}[u]-\mathcal{G}[l] \big)\right\|_{L^1_{xv}}\,ds +\int_0^t\left\|\mathcal{S}(t-s)\big( l\mathcal{R}[u,u]-u\mathcal{R}[l,l] \big) \right\|_{L^1_{xv}}\,ds\\
&\leq\sum_{i=1,2}\int_0^t \left\|\mathcal{G}_i[u,u,u](s)-\mathcal{G}_i[l,l,l](s)\right\|_{L^1_{xv}}\,ds +\int_0^t\left\|\mathcal{L}_i[l,u,u](s)-\mathcal{L}_i[u,l,l](s)\right\|_{L^1_{xv}}\,ds \\
& \lesssim \sum_{\mathcal{T} \in \lbrace \mathcal{L}_1, \mathcal{L}_2, \mathcal{G}_1, \mathcal{G}_2 \rbrace} \sum_{h_1,h_2 \in \lbrace u,l,l-u,u-l \rbrace} \int_0^t \Big (\Vert \mathcal{T}[u-l,h_1,h_2](s) \Vert_{L^1_{xv}}\\
& \hspace{5cm} +\Vert \mathcal{T}[h_1,u-l,h_2](s) \Vert_{L^1_{xv}} +  \Vert \mathcal{T}[h_1,h_2,u-l](s) \Vert_{L^1_{xv}}\Big)  \, ds.
\end{split}
\end{equation*}
Now using \eqref{contraction estimate L1} for $f=u-l$  and \eqref{global bounds} we find that 
\begin{align*}
&    \Vert \mathcal{T}[u-l,h_1,h_2](s) \Vert_{L^1_{xv}} +  \Vert \mathcal{T}[h_1,u-l,h_2](s) \Vert_{L^1_{xv}} +  \Vert \mathcal{T}[h_1,h_2,u-l](s) \Vert_{L^1_{xv}}  \\
\lesssim & \Vert u(s)-l(s) \Vert_{L^1_{xv}}\sum_{h_1,h_2 \in \lbrace u,l,l-u,u-l \rbrace} \Vert \l v \r h_1(s) \Vert_{L^\infty_x L^1_v} \Vert \l v \r^3 h_2(s) \Vert_{L^\infty_{xv}}  \\
 \lesssim & \langle s \rangle^{-3/2}A^2\varepsilon_0^2\Vert u-l \Vert_{L^\infty([0;\infty),L^1_{xv})}  ,
\end{align*}
Hence 
\begin{align*}
     \|u(t)-l(t)\|_{L^1_{xv}} \lesssim A^2\varepsilon_0^2 \Vert u-l \Vert_{L^\infty([0;\infty),L^1_{xv})}\int_0^t\l s\r^{-3/2}\,ds\leq \frac{1}{2} \Vert u-l \Vert_{L^\infty([0;\infty),L^1_{xv})},
\end{align*}
since $0<\varepsilon_0<\bar{\varepsilon}_5$ and $\bar{\varepsilon}_5$ is sufficiently small. Since $t$ was chosen arbitrarily we conclude that $u=l$. Plugging this information into \eqref{integrated loss}, \eqref{integrated gain}, we find that $u,l$ satisfy the same initial value problem as $f,$ thus $u=l=f$ by the uniqueness part of Theorem \ref{thmdispersive}. We conclude that $f\geq 0$.

\end{proof}



\end{document}